\author{Gabriele Mancini\thanks{This work was supported by the Swiss National Science Foundation projects n. PP00P2-144669, PP00P2-170588/1 and P2BSP2-172064.} \\ \footnotesize Università Sapienza di Roma \\ \footnotesize \texttt{gabriele.mancini@uniroma1.it} \and  Luca Martinazzi${}^*$ \\ \footnotesize Università degli Studi di Padova \\ \footnotesize \texttt{luca.martinazzi@math.unipd.it}}
\title{Extremals for fractional Moser-Trudinger inequalities in dimension 1 via harmonic extensions and commutator estimates}
\date{}
\newtheorem{trm}{Theorem}[section]
\newtheorem{prop}[trm]{Proposition}
\newtheorem{cor}[trm]{Corollary}
\newtheorem{lemma}[trm]{Lemma}
\newtheorem{rmk}[trm]{Remark}
\newtheorem{thmx}{Theorem}[section]
\newcommand{\R}{\mathbb{R}}
\newcommand{\N}{\mathbb{N}}
\newcommand{\de}{\partial}
\newcommand{\ve}{\varepsilon}
\newcommand{\eps}{\varepsilon}
\newcommand{\rw}{\rightharpoonup}
\newcommand{\bra}[1]{\left({#1}\right)}
\newcommand{\hl}{(-\Delta)^\frac{1}{2}}
\newcommand{\ql}{(-\Delta)^\frac{1}{4}}
\newenvironment{proof}{\noindent\emph{Proof.}}{\phantom{ } \hfill$\square$\medskip}
\DeclareMathOperator{\loc}{loc}
\newcommand{\blu}[1]{{\color{blue}#1}}
\newcommand{\wt}{\widetilde}
\newcommand{\bdm}{\begin{displaymath}}
\newcommand{\edm}{\end{displaymath}}
\newcommand{\ph}{\varphi}
\newcommand{\ov}{\overline}
\begin{document}
\maketitle

\begin{abstract}
We prove the existence of extremals for fractional Moser-Trudinger inequalities in an interval and on the whole real line. In both cases we use blow-up analysis for the corresponding Euler-Lagrange equation, which requires new sharp estimates obtained via commutator techniques.
\end{abstract}

\section{Introduction}



The celebrated Moser-Trudinger inequality \cite{mos} states that for $\Omega\subset\R^n$ with finite measure $|\Omega|$ we have
\begin{equation}\label{stimaMT0}
\sup_{u\in W^{1,n}_0(\Omega),\;\|\nabla u\|_{L^n(\Omega)}\le 1} \int_\Omega e^{\alpha_n |u|^\frac{n}{n-1}}dx \le C|\Omega|,\quad \alpha_n:=n\omega_{n-1}^\frac{1}{n-1},
\end{equation}
where $\omega_{n-1}$ is the volume of the unit sphere in $\R^n$. The constant $\alpha_n$ is sharp in the sense that the supremum in \eqref{stimaMT0} becomes infinite if $\alpha_n$ is replaced by any $\alpha >\alpha_n$.  
In the case $\Omega=\R^2$, B. Ruf \cite{ruf} proved a similar inequality, using the full $W^{1,2}$-norm instead of the $L^2$-norm of the gradient, then generalized to $\R^n$, $n\ge 2$ by Li-Ruf \cite{LiRuf} as
\begin{equation}\label{stimaMTRuf}
\sup_{u\in W^{1,n}(\R^n),\;\|u\|_{L^n(\R^n)}^n+\|\nabla u\|_{L^n(\R^n)}^n\le 1} \int_{\R^n} \left(e^{\alpha_n |u|^\frac{n}{n-1}}-1\right)dx <\infty.
\end{equation}
Higher-order versions of \eqref{stimaMT0} were proven by Adams \cite{Ada}  on the space $W^{k,\frac{n}{k}}(\Omega)$ for  $n>k\in \mathbb{N}$.

\medskip
In \cite{IMM} the authors proved the following $1$-dimensional fractional extension of the previous results (for the definition of $H^{\frac12,2}(\R)$ and $(-\Delta)^\frac14$ see \eqref{defHsp} in the Appendix).

\begin{thmx}\label{MT3}
Set $I:=(-1,1)\subset\R$ and $\tilde H^{\frac12,2}(I):=\{u\in H^{\frac12,2}(\R): u\equiv 0 \text{ on }\R\setminus I\}$. Then we have
\begin{equation}\label{stimaMT}
\sup_{u\in \tilde H^{\frac12,2}(I), \;\|(-\Delta)^{\frac{1}{4}}u\|_{L^{2}(I)}\leq 1}\int_{I}\left(e^{\alpha u^2}-1\right)dx =C_\alpha<\infty,\quad \text{for }\alpha\le\pi,
\end{equation}
and
\begin{equation}\label{stimaMTR}
\sup_{u\in H^{\frac12,2}(\mathbb{R}), \;\|u\|_{{H}^{\frac12,2}(\R{})}\leq 1} \int_{\mathbb{R}}\left( e^{\alpha u^{2}}-1 \right) dx =D_\alpha<\infty,\quad \text{for }\alpha\le\pi,
\end{equation}
where $\|u\|_{H^{\frac12,2}(\R{})}^2:=\|(-\Delta)^{\frac{1}{4}}u\|_{L^{2}{(\R)}}^2+\|u\|_{L^2(\R)}^2$. The constant $\pi$ is sharp in \eqref{stimaMT} and \eqref{stimaMTR}. 
\end{thmx}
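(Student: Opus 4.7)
The plan is to reduce the fractional inequalities to two-dimensional Moser--Trudinger estimates via the harmonic extension. For $u\in H^{\frac12,2}(\R)$ let $\tilde u(x,y)$ denote its Poisson extension to $\R^2_+=\{y>0\}$; a standard Fourier computation gives an identity of the form $\int_{\R^2_+}|\nabla\tilde u|^2\,dx\,dy=c\,\|\ql u\|_{L^2(\R)}^2$ for an explicit normalization constant $c>0$. Thus a constraint on $\ql u$ becomes a Dirichlet-energy constraint on $\tilde u$ in one extra dimension, and one hopes to recover exponential integrability of the boundary trace $u$ from a two-dimensional Moser--Trudinger estimate on the upper half-plane.

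For the interval estimate \eqref{stimaMT}, since $u\equiv 0$ outside $I$, the extension $\tilde u$ can be restricted to a half-disc $B_R^+$, $R>1$, and one applies a boundary Moser--Trudinger inequality of Chang--Yang type: if $v\in H^1(B_R^+)$ has controlled Dirichlet energy and vanishes on the curved part of $\de B_R^+$, then $\int_I e^{\pi v(x,0)^2}\,dx<\infty$. Such an estimate follows from even reflection across $\{y=0\}$ and the classical $2$D Moser calculation combined with a boundary-trace version; the precise tracking of constants through the extension identity, the reflection, and the trace operator produces the sharp exponent $\pi$. The line inequality \eqref{stimaMTR} follows along the same lines, using the additional $L^2(\R)$ term in $\|u\|_{H^{\frac12,2}}$ to handle unboundedness: split $\R=[-R,R]\cup\{|x|>R\}$, apply the interval bound on the bounded piece, and on the tail use $e^{\alpha u^2}-1\sim\alpha u^2$ together with the $L^2$ control. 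The subcritical cases $\alpha<\pi$ admit a softer proof via the Riesz representation $u=c\,|x|^{-1/2}*\ql u$, Hardy--Littlewood--Sobolev, and a series expansion of the exponential.

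Sharpness of $\pi$ is established via a Moser-type sequence: $u_k(x)=\sqrt{(\log k)/\pi}\,\psi_k(x)$ for a logarithmically cut-off profile $\psi_k$ concentrating at $0$, for which one checks directly that $\|\ql u_k\|_{L^2(I)}^2=1+o(1)$ while $\int_I e^{\alpha u_k^2}\,dx\to\infty$ whenever $\alpha>\pi$. The main obstacle is the critical case $\alpha=\pi$ under the \emph{local} norm $\|\ql u\|_{L^2(I)}$ rather than the full norm $\|\ql u\|_{L^2(\R)}$: one must argue that the exterior contribution to the fractional seminorm is negligible precisely in the regime where Moser concentration saturates the inequality. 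This forces us either through the delicate trace argument on the half-plane extension sketched above, or through a direct Adams-type sharp bound for the Riesz potential $|x|^{-1/2}*f$ with $\|f\|_{L^2}=1$, combined with a careful analysis of the "nonlocal tail" $\int_{I^c}(\ql u)^2\,dx$ for $u\in\tilde H^{\frac12,2}(I)$; classical Schwarz symmetrization is unavailable since $\ql$ is nonlocal, which is the core difficulty.
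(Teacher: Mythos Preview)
This theorem is not proved in the paper: it is quoted from \cite{IMM} (see the sentence immediately preceding the statement, ``In \cite{IMM} the authors proved the following\ldots''), and is used throughout as a known input. Consequently there is no ``paper's own proof'' to compare your attempt against; the proofs in Sections~2--4 concern the existence of \emph{extremals} for \eqref{stimaMT} and \eqref{stimaMTR}, not the inequalities themselves.

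That said, two remarks on your sketch. First, the ``core difficulty'' you isolate at the end---handling the local norm $\|\ql u\|_{L^2(I)}$ versus the full norm $\|\ql u\|_{L^2(\R)}$---is a misreading. Although the displayed constraint in \eqref{stimaMT} writes $L^2(I)$, the paper consistently uses $\|\ql u\|_{L^2(\R)}$ for $u\in\tilde H^{\frac12,2}(I)$ (see e.g.\ the statement of Theorem~\ref{extI}, equation \eqref{energy}, and Proposition~\ref{Cg2}); for $u$ supported in $I$ the operator $\ql$ is still nonlocal, so $\ql u$ is not supported in $I$, and the intended norm is the global one. With the global seminorm the extension identity $\|\nabla\tilde u\|_{L^2(\R^2_+)}^2=\|\ql u\|_{L^2(\R)}^2$ holds with constant $1$, and your ``nonlocal tail'' obstruction disappears.

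Second, your proposed route (harmonic extension $+$ boundary Moser--Trudinger on a half-disc, plus Adams-type bounds on the Riesz potential) is broadly the strategy of \cite{IMM} and of the references \cite{LamLu,mar,FM} cited after Theorem~A; it is a reasonable outline, but as written it is a plan rather than a proof. The genuinely delicate point is tracking the sharp constant $\pi$ through the trace/extension: the even reflection you suggest doubles the Dirichlet energy and halves the exponent, so one must instead use a boundary Moser--Trudinger inequality on the half-disc directly (with the correct constant $\pi$ for the boundary trace), or the sharp Adams/Fontana--Morpurgo bound for $I_{1/2}*f$ with $\|f\|_{L^2}\le 1$. Your sketch asserts that ``precise tracking of constants\ldots produces the sharp exponent $\pi$'' without carrying this out.
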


More general results have recently appeared, see e.g. \cite{AT,FM, LamLu,mar,RS,Takah}, in which both the dimension and the (fractional) order of differentiability have been generalized. For instance, \eqref{stimaMT} and \eqref{stimaMTR} can be seen as $1$-dimensional cases of the more general results of \cite{LamLu,mar,FM} that hold in arbitrary dimension $n$.

\medskip

The existence of extremals for this kind of inequalities is a challenging question. Existence of extremals for \eqref{stimaMT0} was originally proven by L. Carleson and A. Chang \cite{CC} in the case of the unit ball, a fundamental result later extended by Struwe \cite{sIHP} and Flucher \cite{Flu} to the case of general bounded domains in $\R^2$ and by K. Lin \cite{Lin} to the case of bounded domains in $\R^n$. In the case of the Li-Ruf inequality \eqref{stimaMTRuf}, the existence of extremals appears in \cite{LiRuf} when $n\ge 3$ and was proven by Ishiwata \cite{Ish} when $n=2$. For the higher-order Adams inequality the existence of extremals  has been proven in various cases, e.g. by Li-Ndiaye \cite{LN} on a $4$ dimensional closed manyfold, by Lu-Yang \cite{LuYang} for a $4$ dimensional bounded domain and by DelaTorre-Mancini \cite{DlTM} for a bounded domain in $\R^{2m}$, $m\ge 1$ arbitrary. 

On the other hand, the existence of extremals for the fractional Moser-Trudinger inequality has remained open until now,  with the exception of Takahashi \cite{Takah} considering a subcritical version of \eqref{stimaMTR} of Adachi-Tanaka type \cite{AT}, and Li-Liu \cite{LL} treating the case of a fractional Moser-Trudinger on $H^{\frac{1}{2},2}(\de M)$ with $M$ a compact Riemann surface with boundary. 
The idea of Li and Liu is that working on the boundary of a compact manifold, one can localize the $H^{\frac{1}{2},2}$-norm. 

Applying the same method for an interval $I\subset \R$ creates problems near $\de I$, which require additional care in the estimate, and the problem becomes even more challenging when working on the whole $\R$. The main purpose of this paper is to handle these two cases and prove that the suprema in \eqref{stimaMT} and \eqref{stimaMTR} are attained.

\begin{trm}\label{extI}
For any $0<\alpha \le \pi$, the inequality \eqref{stimaMT} has an extremal i.e. there exists $u_\alpha \in \tilde H^{\frac{1}{2},2}(I)$ such that  $\|(-\Delta)^\frac{1}{4} u_\alpha\|_{L^2(\R)}\le1$ and
$$
\int_{I} \bra{e^{\alpha u_\alpha^2} -1} dx = C_\alpha. 
$$
\end{trm}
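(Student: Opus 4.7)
The plan is to run a Carleson-Chang-Struwe style blow-up analysis combined with subcritical approximation. For $\alpha<\pi$ strictly subcritical, an extremal $u_\alpha$ is produced by direct concentration-compactness: pick a maximizing sequence $(u_k)$ in the unit ball of $\tilde H^{1/2,2}(I)$, extract a weak limit $u_\alpha\in\tilde H^{1/2,2}(I)$, and rule out $u_\alpha\equiv 0$ using $C_\alpha>0$ (any non-zero test function already realizes a positive value). A P.-L. Lions-type improvement of \eqref{stimaMT} applied to the normalized tails then makes $e^{\alpha u_k^2}$ equi-integrable, and Vitali's theorem yields strong convergence of the exponential integral. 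So the real content is the critical case $\alpha=\pi$.

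For $\alpha=\pi$ I would take $\alpha_k\uparrow\pi$ and consider the subcritical extremals $u_k$ just produced. Each $u_k$ solves the Euler-Lagrange equation
$$\hl u_k = \lambda_k u_k e^{\alpha_k u_k^2}\quad\text{in }I,\qquad u_k\equiv 0\text{ on }\R\setminus I,$$
with multiplier $\lambda_k>0$. Either $c_k:=\|u_k\|_{L^\infty(I)}$ is uniformly bounded, in which case $\hl$-regularity and standard compactness yield strong convergence of $u_k$ to a critical extremal $u_\pi$; or $c_k\to\infty$ along a subsequence. In the blow-up alternative, pick $x_k\in\ov I$ with $u_k(x_k)=c_k$ and $x_k\to x_0\in\ov I$, harmonically extend $u_k$ to $\R^2_+$, and rescale via
$$v_k(y):=\frac{c_k-u_k(x_k+r_k y)}{\alpha_k c_k},\qquad r_k^{-1}:=\lambda_k c_k^2 e^{\alpha_k c_k^2}.$$
A Liouville-type classification for the half-plane extension (together with a moving plane argument when $x_0\in\de I$) identifies the limit profile $v_\infty$ with the standard $\hl$-bubble on $\R$ or its reflected boundary variant.

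The heart of the argument is a sharp capacity-type upper bound
$$\lim_{k\to\infty}\int_I\bra{e^{\alpha_k u_k^2}-1}\,dx\ \le\ |I|+\pi e,$$
obtained by splitting $I$ into a shrinking neighborhood of $x_k$ (on which the change of variables $y\mapsto (x-x_k)/r_k$ reduces the exponential integral to the bubble contribution $\pi e$) and its complement (on which $u_k\to 0$ strongly, contributing at most $|I|=2$). The commutator estimates announced in the title appear exactly here: in order to localize $\hl u_k$ to a neighborhood of $x_k$ and derive the sharp decay of $u_k$ outside the concentration scale, one must control the defect $\hl(\chi u_k)-\chi\hl u_k$ in sharp $L^p$-norms for a suitable cut-off $\chi$. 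This is especially delicate when $x_0\in\de I$, because the hard zero condition $u_k\equiv 0$ outside $I$ interacts non-trivially with the non-local operator $\hl$.

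The contradiction step is then to construct a Moser-type test function $\ph_\e\in\tilde H^{1/2,2}(I)$, modelled on the $\hl$-bubble truncated and centered inside $I$, with $\|\ql\ph_\e\|_{L^2(\R)}\le 1$ and
$$\int_I\bra{e^{\pi\ph_\e^2}-1}\,dx\ >\ |I|+\pi e\quad\text{for }\e\text{ small}.$$
Since $C_\pi\ge\int_I(e^{\pi\ph_\e^2}-1)\,dx$, this strict inequality contradicts the capacity bound obtained in the blow-up alternative, ruling out $c_k\to\infty$ and yielding the critical extremal $u_\pi$. The main obstacles I anticipate are twofold: first, matching the precise value of the capacity constant with the Moser test-function lower bound, which requires both sides to be computed with asymptotic accuracy down to the $O(1)$ term; and second, the boundary case $x_0\in\de I$, where the rescaled problem lives on a half-line and the sharp commutator bounds for $\hl$ with respect to the zero condition on $\R\setminus I$ are the genuinely new analytic input.
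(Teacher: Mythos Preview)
Your overall architecture matches the paper's: subcritical extremals, Euler--Lagrange equation, blow-up dichotomy, and a Carleson--Chang-type contradiction between an upper bound under blow-up and a Moser-type test-function lower bound. One worry you raise is a non-issue: the moving-plane method applied to \eqref{eqI} forces each $u_k$ to be even and strictly decreasing in $|x|$, so the maximum is always at $0$ and the boundary case $x_0\in\partial I$ never arises.

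The genuine gap is in the upper bound. Your splitting gives the wrong answer: outside the concentration neighborhood $u_k\to 0$ (Corollary \ref{uinfty0}), so $\int(e^{\alpha_k u_k^2}-1)\to 0$ there, not $|I|$; inside, the change of variables yields $\frac{1}{\lambda_k\mu_k^2}(1+o(1))$, not a universal constant like $\pi e$. The real content is to bound $\limsup\frac{1}{\lambda_k\mu_k^2}$, and this requires an ingredient you do not mention: convergence of $\mu_k u_k$ to the Green's function $G=G_0$ of $\hl$ on $I$ (Lemma \ref{leconvgreen}, Proposition \ref{convgreen}). With this in hand, a capacity estimate on the harmonic extensions---comparing the Dirichlet energy of $\tilde u_k$ on the half-annulus $(B_\delta\setminus B_{Lr_k})\cap\R^2_+$ against the explicit radial minimizer---yields $C_\pi\le 2\pi e^{\pi S_0}=4\pi$, where $S_0=\frac{\log 2}{\pi}$ is the regular part of $G$ at $0$. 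The test function beating $4\pi$ is likewise built by gluing the bubble profile to $\tilde G/c$ across a level set; the same constant $S_0$ appears on both sides and must match to $O(1)$ precision for the contradiction to close. Your constants $|I|+\pi e$ would not match any such test-function lower bound. Finally, the commutator estimates are not used to localize near $\partial I$; in the interval case they enter (via \cite{MMS}) only to secure the $L_{\frac12}(\R)$ bound on $\eta_k$ needed for the limit bubble equation in Proposition \ref{conveta}.
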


Theorem \ref{extI} is rather simple to prove for $\alpha \in (0,\pi)$,  while the case $\alpha = \pi$ relies on a delicate blow-up analysis for subcritical extremals. 

 A similar analysis can be carried out for the Ruf-type inequality \eqref{stimaMTR}.  However, working on the whole real line we need to face  additional difficulties due to the lack of compactness of the embedding of $H=H^{\frac{1}{2},2}(\R)$ into $L^2(\R)$: vanishing at infinity might occur for maximizing sequences, even in the sub-critical case $\alpha \in (0,\pi)$.  This issue is not merely technical indeed Takahashi \cite{Takah} proved that \eqref{stimaMTR} has no extremal when $\alpha$ is small enough. Here, in analogy with the results in dimension $n\ge 2$, we prove that the supremum in \eqref{stimaMTR} is attained if $\alpha$ sufficiently close to $\pi$.


\begin{trm}\label{extR}
There exists $\alpha^*\in (0,\pi)$ such that for $\alpha^*\le \alpha\le \pi$ the inequality \eqref{stimaMTR} has an extremal, namely,  there exists $\bar u_\alpha \in  H^{\frac{1}{2},2}(\R)$ such that $\|\bar u_\alpha\|_{H^{\frac{1}{2},2}(\R)}\le 1$ and 
$$
\int_{\R} \bra{e^{\alpha \bar u^2_\alpha} -1} dx = D_\alpha. 
$$
\end{trm}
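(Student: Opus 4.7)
The plan is to analyze a symmetric decreasing maximizing sequence $\{u_n\}\subset H^{\frac{1}{2},2}(\R)$ for $D_\alpha$ with $\|u_n\|_{H^{\frac{1}{2},2}(\R)}\le 1$, ruling out both vanishing (coming from the non-compact embedding $H^{\frac{1}{2},2}(\R)\hookrightarrow L^2(\R)$) and concentration at a point (coming from the critical threshold $\pi$). The parameter $\alpha^*$ will emerge as the smallest $\alpha$ for which an explicit Moser-type test function strictly beats the value $\alpha$, which is the asymptotic upper bound for the exponential integral along vanishing sequences.

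\textbf{Symmetrization and vanishing.} First I would symmetrize: since the $H^{\frac{1}{2},2}$-norm does not increase under symmetric decreasing rearrangement (via the Riesz rearrangement inequality applied to the heat-semigroup representation of $\ql$), one can take $\{u_n\}$ even and nonincreasing in $|x|$. Passing to a subsequence $u_n\rightharpoonup u$ in $H^{\frac{1}{2},2}(\R)$ and $u_n\to u$ in $L^p_{\loc}(\R)$ for every $p<\infty$. If $u\equiv 0$, the monotonicity of $u_n$ gives $u_n\to 0$ uniformly on $\set{|x|\ge R}$ for every $R>0$. Then the expansion $e^{\alpha t}-1=\alpha t+\sum_{k\ge 2}\tfrac{\alpha^k}{k!}t^k$, together with convergence in every $L^q_{\loc}$ and the pointwise decay at infinity, yields
\[
\limsup_n\int_\R(e^{\alpha u_n^2}-1)\,dx\le\alpha\,\limsup_n\|u_n\|_{L^2(\R)}^2\le\alpha.
\]
To beat this threshold, I would construct Moser-type test functions $\phi_\e\in H^{\frac{1}{2},2}(\R)$, modelled on a truncated and rescaled fractional logarithm $(2\pi)^{-1/2}\log(1/|x|)$ plus a constant correction, normalized so that $\|\phi_\e\|_{H^{\frac{1}{2},2}(\R)}=1$, and verify
\[
\int_\R\bra{e^{\alpha\phi_\e^2}-1}dx\ge\alpha+\eta(\alpha)+o_\e(1),
\]
with $\eta(\alpha)>0$ when $\alpha$ is close enough to $\pi$. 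Taking $\alpha^*$ as the infimum of such $\alpha$'s yields $D_\alpha>\alpha$ for $\alpha\in[\alpha^*,\pi]$, which excludes vanishing of every maximizing sequence.

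\textbf{Non-concentration, limit and main obstacle.} Once $u\not\equiv 0$, a Brezis-Lieb decomposition of $\|\cdot\|_{H^{\frac{1}{2},2}(\R)}^2$ together with the subcritical Moser-Trudinger inequality reduces strong convergence to the exclusion of concentration of $u_n-u$ at the origin. To handle this I would adapt the blow-up analysis carried out for Theorem \ref{extI}: rescaling $u_n-u$ near $0$ and applying the harmonic extension together with the commutator estimates of the paper produces a sharp upper bound on the mass absorbed by a concentration bubble; combined with the strict lower bound $\alpha+\eta(\alpha)$ from the test functions, this forces $u_n\to u$ strongly in $H^{\frac{1}{2},2}(\R)$, so that $\bar u_\alpha:=u$ is the desired extremal. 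The hardest step will be the quantitative positivity of $\eta(\alpha)$: one has to track, in the full Ruf-type norm, the delicate interplay between the singular and regular parts of $\phi_\e$ and extract an explicit leftover, which in the fractional one-dimensional setting requires careful expansions of $\ql\phi_\e$ near the origin and at infinity; the non-concentration step, by contrast, reduces largely to estimates already developed for the interval case.
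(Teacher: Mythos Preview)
Your overall architecture---symmetrize, rule out vanishing via $D_\alpha>\alpha$, rule out concentration via blow-up---matches the paper's in spirit, but there is a genuine gap in how you handle the critical case $\alpha=\pi$. You propose to run the blow-up analysis directly on a maximizing sequence $\{u_n\}$ for $D_\pi$, rescaling $u_n-u$ and invoking the commutator estimates. This does not work: the entire blow-up machinery of the paper (Theorem~\ref{compact}, the $L_s$-bounds on $\eta_k$ obtained from the commutator lemmas, the convergence $\eta_k\to\eta_\infty$ to the Liouville profile, and above all the convergence $\mu_k u_k\to G$ to the Green function in Lemma~\ref{L2conv}) requires that each $u_k$ solve the Euler--Lagrange equation $(-\Delta)^{1/2}u_k+u_k=\lambda_k u_k e^{\alpha_k u_k^2}$. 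A generic maximizing sequence carries no such structure, so after rescaling you have no equation to pass to the limit, no way to identify the bubble, and no Green-function asymptotics from which the sharp threshold $2\pi e^{-\gamma}$ is extracted. Your remark that the non-concentration step ``reduces largely to estimates already developed for the interval case'' overlooks that those estimates, too, are derived for solutions of the equation, not for arbitrary sequences.

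The paper's route is different and this difference is essential: it first proves existence of extremals for \emph{subcritical} $\alpha_k\nearrow\pi$ (your vanishing argument, plus the scaling trick of Lemma~\ref{novan} to exclude loss of $L^2$-mass at infinity, which your Brezis--Lieb sketch also leaves unaddressed), obtains the Euler--Lagrange equation for each $u_k$, and only then applies Theorem~\ref{compact} to this sequence of \emph{solutions}. If blow-up occurred one would get $D_\pi\le 2\pi e^{-\gamma}$ (the constant coming from the regular part $-\gamma/\pi$ of the Green function of $(-\Delta)^{1/2}+\mathrm{Id}$ on $\R$), while the test-function construction of Proposition~\ref{Cg2bis} gives $D_\pi>2\pi e^{-\gamma}$; the contradiction rules out blow-up, and the bounded sequence of subcritical extremals then converges to an extremal for $D_\pi$.
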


As for Theorem \ref{extI}, the proof of Theorem \ref{extR} for $\alpha =\pi$ is based on blow-up analysis. In fact we need to study the blow-up of a non-local equation on the whole real line (no boundary conditions), as done in the following theorem.


\begin{trm}\label{compact}
Let $(u_k)\subset H=H^{\frac12,2}(\R)$ be a sequence of non-negative solutions to
\begin{equation}\label{eqR}
(-\Delta)^\frac12 u_k+u_k=\lambda_k u_k e^{\alpha_k u_k^2}\quad \text{in }\R,
\end{equation}
where $\alpha_k\to \pi$ and $\lambda_k \to \lambda_\infty\ge 0$.
{Assume $u_k$ even and decreasing ($u_k(-x)=u_k(x)\le u_k(y)$ for $x\ge y\ge 0$) for every $k$} and set $\mu_k:=\sup_\R u_k=u_k(0)$. Assume also that
\begin{equation}\label{energybound}
\Lambda:=\limsup_{k\to\infty}\|u_k\|_H^2<\infty.
\end{equation}
Then up to extracting a subsequence we have that either
\begin{itemize}
\item[(i)] $\mu_k\le C$, $u_k\to u_\infty$ in $C^\ell_{\loc}(\R)$ for every $\ell\ge 0$, where $u_\infty\in C^\ell_{\loc}(\R)\cap H$ solves
\begin{equation}\label{eqinftyR}
(-\Delta)^\frac12 u_\infty+u_\infty=\lambda_\infty u_\infty e^{\pi u_\infty^2}\quad \text{in }\R,
\end{equation}
or
\item[(ii)] $\mu_k\to \infty$, $u_k\to u_\infty$ weakly in $H$ and strongly in $C^0_{\loc}(\bar \R \setminus \{0\})$ where $u_\infty$ is a solution to \eqref{eqinftyR}. Moreover, setting $r_k$ such that
\begin{equation}\label{defmukrk}
\lambda_k r_k \mu_k^2 e^{\alpha_k \mu_k^2} =\frac{1}{\alpha_k},
\end{equation}
and
\begin{equation}\label{etak}
\eta_k(x):=2\alpha_k\mu_k(u_k(r_kx)-\mu_k),\quad \eta_\infty(x):=-\log\left({1+|x|^2}\right),
\end{equation}
one has $\eta_k\to \eta_\infty$ in $C^\ell_{\loc}(\R{})$ for every $\ell\ge 0$, $\sup_{k}\|\eta_k\|_{L_s(\R)}<\infty$ for any $s>0$ (cfr. \eqref{L12}),  and $\Lambda\ge \|u_\infty\|_H^2+1$. 
\end{itemize}
\end{trm}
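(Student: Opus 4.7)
The plan is to argue dichotomously according to whether the sequence $\mu_k$ remains bounded. In case (i), where $\mu_k\le C$, the nonlinearity $\lambda_k u_k e^{\alpha_k u_k^2}$ is dominated pointwise by $C'u_k$, hence uniformly controlled in $L^2(\R)$ by \eqref{energybound}. Passing to the harmonic extension $U_k$ of $u_k$ to $\R^2_+$ turns \eqref{eqR} into an elliptic problem with a Robin-type boundary condition of bounded data; standard half-Laplacian regularity gives a uniform $C^{0,\sigma}_{\loc}$ bound, and bootstrapping (using the smoothness of $t\mapsto t e^{\alpha t^2}$) produces convergence along a subsequence in $C^\ell_{\loc}(\R)$ to some $u_\infty\in H$ solving \eqref{eqinftyR}.

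For case (ii) one first observes, by testing \eqref{eqR} at the maximum $x=0$ and using $(-\Delta)^{1/2}u_k(0)\ge 0$, that $\lambda_k e^{\alpha_k\mu_k^2}\ge 1$, so that \eqref{defmukrk} immediately gives $r_k\to 0$. Setting $\eta_k$ as in \eqref{etak}, the scaling identity $(-\Delta)^{1/2}[u_k(r_k\cdot)](x) = r_k [(-\Delta)^{1/2}u_k](r_k x)$ combined with
\[
\alpha_k u_k(r_k x)^2 = \alpha_k\mu_k^2 + \eta_k(x) + \frac{\eta_k(x)^2}{4\alpha_k\mu_k^2}
\]
and the normalization \eqref{defmukrk} transforms \eqref{eqR} into
\[
(-\Delta)^{1/2}\eta_k = 2\left(1+\frac{\eta_k}{2\alpha_k\mu_k^2}\right) e^{\eta_k}\exp\!\left(\frac{\eta_k^2}{4\alpha_k\mu_k^2}\right) - 2\alpha_k r_k\mu_k\, u_k(r_k\cdot),
\]
whose formal limit on compact sets is $(-\Delta)^{1/2}\eta_\infty = 2 e^{\eta_\infty}$, once the last, linear, term is shown to vanish (equivalently, $\alpha_k r_k\mu_k^2\to 0$, a refinement obtained by combining \eqref{energybound} with the fact that concentration forces $\lambda_k e^{\alpha_k\mu_k^2}\to\infty$).

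The principal technical obstacle is establishing uniform local regularity for $\eta_k$ so that one may extract a convergent subsequence. Since $\eta_k\le 0$, the right-hand side of the rescaled equation is locally bounded; passing to the harmonic extension of $\eta_k$ gives an elliptic problem with bounded boundary data, and the commutator estimates advertised in the paper's title are the natural tool to produce uniform $C^{0,\sigma}_{\loc}$-bounds for the nonlocal operator in this setting. Arzelà--Ascoli followed by interior bootstrapping yields $\eta_k\to\eta_\infty$ in $C^\ell_{\loc}(\R)$, and the classification of entire solutions of $(-\Delta)^{1/2}\eta = 2e^\eta$ on $\R$ with $\int e^\eta\,dx<\infty$ (Da Lio--Martinazzi--Rivière), together with $\eta_\infty(0)=0$ and evenness, pins down $\eta_\infty(x) = -\log(1+x^2)$. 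The $L_s$-bounds on $\eta_k$ are then a direct consequence of the explicit form of $\eta_\infty$ and the uniform local control.

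Weak convergence $u_k\rightharpoonup u_\infty$ in $H$ is immediate from \eqref{energybound}. For strong convergence in $C^0_{\loc}(\bar\R\setminus\{0\})$, note that $u_k$ being symmetric-decreasing and $L^2$-bounded gives the pointwise bound $u_k(x)\le\sqrt{\Lambda/(2\delta)}$ for $|x|\ge\delta$, so elliptic regularity applied to \eqref{eqR} supplies the compactness. Finally, the quantization $\Lambda\ge\|u_\infty\|_H^2+1$ — the heart of the theorem — is proven by testing \eqref{eqR} against $u_k$ to obtain $\|u_k\|_H^2 = \lambda_k\int_\R u_k^2 e^{\alpha_k u_k^2}\,dx$ and splitting the integral at the scale $R r_k$. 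The outer piece, by $C^0_{\loc}$-strong convergence away from $0$ and \eqref{eqinftyR}, tends to $\lambda_\infty\int_\R u_\infty^2 e^{\pi u_\infty^2}\,dx = \|u_\infty\|_H^2$. The inner piece, after the change of variables $y = x/r_k$ and using \eqref{defmukrk} together with the convergence of $\eta_k$, reduces to
\[
\lambda_k r_k\int_{-R}^R u_k(r_k y)^2 e^{\alpha_k u_k(r_k y)^2}\,dy \;\longrightarrow\; \frac{1}{\pi}\int_{-R}^R\frac{dy}{1+y^2} \;\xrightarrow{R\to\infty}\; 1,
\]
and a Vitali-type argument ensuring that the intermediate region $R r_k\le |x|\le\delta$ contributes no residual mass (so that the double limit in $k$ and $R$ may be safely interchanged) is the most delicate remaining step.
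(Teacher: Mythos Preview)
Your outline has one genuine gap, and it concerns exactly the point where the nonlocal nature of the problem bites. You obtain local $C^{0,\sigma}$-bounds for $\eta_k$ correctly (this requires only that $(-\Delta)^{1/2}\eta_k$ is bounded in $L^\infty$ together with $\eta_k\le 0$ and $\eta_k(0)=0$; no commutator estimates are needed here), and you extract a $C^{0,\alpha}_{\loc}$-limit $\eta_\infty$. But local convergence alone does \emph{not} let you pass to the limit in the equation $(-\Delta)^{1/2}\eta_k=f_k$: for any test function $\varphi\in\mathcal S$, the pairing $\int_{\R}\eta_k\,(-\Delta)^{1/2}\varphi\,dx$ involves $\eta_k$ on all of $\R$, and without a uniform $L_s$-bound on $\eta_k$ you cannot conclude that $\eta_\infty$ solves $(-\Delta)^{1/2}\eta_\infty=2e^{\eta_\infty}$. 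In particular, the classification of Da~Lio--Martinazzi--Rivi\`ere cannot be invoked until the limit equation has been established.

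Your sentence ``The $L_s$-bounds on $\eta_k$ are then a direct consequence of the explicit form of $\eta_\infty$ and the uniform local control'' is therefore circular: the explicit form of $\eta_\infty$ is only available \emph{after} the $L_s$-bound has been proven and used to identify the limit equation. In the paper the logic runs the other way around: the commutator estimates (Lemma~\ref{Strange Lemma} and Lemma~\ref{Lemma crucial}) are not used for local H\"older regularity at all, but to show that $\int_{-R}^{R}|(-\Delta)^{s/2}\eta_k|\,dx\le C R^{1-s}$ uniformly in $k$, from which the uniform $L_s(\R)$-bound on $\eta_k$ is then extracted by a direct integral-kernel argument (Lemma~\ref{LemmaLs}). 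This is the genuinely new step, and your sketch skips it. A secondary point: for the quantization $\Lambda\ge\|u_\infty\|_H^2+1$, the paper does not control the intermediate region by a Vitali argument; it simply applies Fatou's lemma to the outer integral to obtain $\liminf_k\int_{\R\setminus(-Rr_k,Rr_k)}\lambda_k u_k^2 e^{\alpha_k u_k^2}\,dx\ge\|u_\infty\|_H^2$, which together with the inner contribution gives the inequality without any residual-mass analysis.
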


The proof of Theorem \ref{compact} is quite delicate because local elliptic estimates of a nonlocal equation depend on global bounds as we shall prove in Lemma \ref{LemmaLs}. This will be based on sharp commutator estimates (Lemma \ref{Strange Lemma}), as developed in \cite{MMS} for the case of a bounded domain in $\R^n$, extending to the fractional case the approach of \cite{mar2}.
 
 \medskip

We expect similar existence results to hold for a perturbed version of inequalities \eqref{stimaMT}-\eqref{stimaMTR}, as in \cite{MM} and \cite{Thi} (see also the recent results in \cite{IMNS}), but we will not investigate this issue here.  

\section{Proof of Theorem \ref{extI}}\label{SecExtI}

\subsection{Strategy of the proof}
We will focus on the case $\alpha =\pi$, since the existence of extremals for \eqref{stimaMT} with $\alpha\in (0,\pi)$  follows easily by Vitali's convergence theorem, see e.g. the argument in \cite[Proposition 6]{MM}.

Let $u_k$ be an extremal of \eqref{stimaMT} for $\alpha =\alpha_k=\pi-\frac1k$.  
By replacing $u_k$ with $|u_k|$ we can assume that $u_k\ge 0$. Moreover $\|(-\Delta)^\frac{1}{4} u_k\|_{L^2(\R)}=1$, and $u_k$ satisfies the Euler-Lagrange equation
\begin{equation}\label{eqI}
\hl u_k=\lambda_k u_k e^{\alpha_k u_k^2},
\end{equation}
with bounds on the Lagrange multipliers $\lambda_k$ (see \eqref{lambdak}).

Using the monotone convergence theorem we also get
\begin{equation}\label{convext}
\lim_{k\to\infty}\int_{I}\left(e^{\alpha_k u_k^2}-1\right)dx =\lim_{k\to \infty} C_{\alpha_k} = C_\pi,
\end{equation}
where $C_{\alpha_k}$ and $C_\pi$ are as in \eqref{stimaMT}.

If $\mu_k:=\max_I u_k=O(1)$ as $k\to\infty$, then up to a subsequence $u_k\to u_\infty$ locally uniformly, where by \eqref{convext} $u_\infty$ maximizes \eqref{stimaMT} with $\alpha=\pi$. Therefore we will work by contradiction, assuming
\begin{equation}\label{ukinfty}
\lim_{k\to\infty} \mu_k= \infty.
\end{equation}
By studying the blow-up behavior of $u_k$, see in particular Propositions \ref{conveta} and \ref{convgreen}, we will show that \eqref{ukinfty} implies  $C_\pi\le 4\pi$ (Proposition \ref{Cg1}), but with suitable test functions we will also prove that $C_\pi>4\pi$ (Proposition \ref{Cg2}), hence contradicting \eqref{ukinfty} and completing the proof of Theorem \ref{extI}. 

\subsection{The blow-up analysis}

The following proposition is well known in the local case, and its proof in the present setting is similar to the local one. We give it for completeness.

\begin{prop}
We have $u_k\in C^\infty(I)\cap C^{0,\frac12}(\bar I)$, $u_k>0$ in $I$, and $u_k$ is symmetric with respect to $0$ and decreasing with respect to $|x|$. Moreover,
\begin{equation}\label{lambdak}
0<\lambda_k <\lambda_1(I).
\end{equation}
Up to a subsequence we have $\lambda_k\to \lambda_\infty$ and $u_k\to u_\infty$ weakly in $\tilde H^{\frac12,2}(I)$ and strongly in $L^2(I)$, where $u_\infty$ solves
\begin{equation}\label{eqinfty}
\hl u_\infty=\lambda_\infty u_\infty e^{\pi u_\infty^2}.
\end{equation}
\end{prop}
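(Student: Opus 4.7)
The plan is to verify each claim by the usual combination of elliptic regularity for $(-\Delta)^{1/2}$, the strong maximum principle, fractional P\'olya--Szeg\H{o} rearrangement, and compactness, all applied to the Euler--Lagrange equation \eqref{eqI} (which $u_k$ satisfies together with the saturated constraint $\|(-\Delta)^{1/4}u_k\|_{L^2(\R)}^2=1$).

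\smallskip

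\emph{Regularity, sign and symmetry.} A bootstrap on \eqref{eqI} handles the regularity: Sobolev embedding $\tilde H^{1/2,2}(I)\hookrightarrow L^q(I)$ for every $q<\infty$ combined with the Moser--Trudinger bound, which places $e^{\alpha_k u_k^2}\in L^p(I)$ for some $p>1$ (since $\alpha_k<\pi$), gives the right-hand side of \eqref{eqI} in some $L^{p_0}(I)$ with $p_0>1$; iterating the fractional $L^p$-theory for the Dirichlet problem of $(-\Delta)^{1/2}$ on $I$ I reach $u_k\in L^\infty(I)$ and then $u_k\in C^\infty(I)$, the $C^{0,1/2}(\bar I)$ regularity up to $\partial I$ being obtained from fractional boundary estimates of Ros-Oton--Serra type. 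Replacing $u_k$ with $|u_k|$ (same norm and same exponential integral, hence still extremal) I may assume $u_k\ge 0$; the strong maximum principle applied to $\hl u_k=V_k u_k$ with $V_k:=\lambda_k e^{\alpha_k u_k^2}\in L^\infty$ then excludes interior zeros. For symmetry I invoke the fractional P\'olya--Szeg\H{o} inequality: the symmetric decreasing rearrangement $u_k^*$ keeps $\int_I e^{\alpha_k (u_k^*)^2}dx=\int_I e^{\alpha_k u_k^2}dx$ and does not increase $\|(-\Delta)^{1/4}\cdot\|_{L^2(\R)}$, so $u_k^*$ is still admissible and extremal and may replace $u_k$.

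\smallskip

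\emph{Bounds on $\lambda_k$.} Testing \eqref{eqI} against $u_k$ gives
\[
1=\|(-\Delta)^{1/4}u_k\|_{L^2(\R)}^2=\lambda_k\int_I u_k^2 e^{\alpha_k u_k^2}\,dx,
\]
so $\lambda_k>0$. For the upper bound I test against the positive first Dirichlet eigenfunction $\varphi_1$ of $(-\Delta)^{1/2}$ on $I$ (eigenvalue $\lambda_1(I)$), using self-adjointness:
\[
\lambda_1(I)\int_I u_k\varphi_1\,dx=\lambda_k\int_I u_k\varphi_1 e^{\alpha_k u_k^2}\,dx>\lambda_k\int_I u_k\varphi_1\,dx,
\]
since $u_k,\varphi_1>0$ and $e^{\alpha_k u_k^2}>1$ on $I$; dividing yields $\lambda_k<\lambda_1(I)$.

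\smallskip

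\emph{Passage to the limit.} The uniform bound $\|u_k\|_{\tilde H^{1/2,2}(I)}=1$ and $\lambda_k\in(0,\lambda_1(I))$ yield, along a subsequence, $u_k\rightharpoonup u_\infty$ weakly in $\tilde H^{1/2,2}(I)$, $u_k\to u_\infty$ strongly in $L^2(I)$ and a.e.\ (by compact embedding), and $\lambda_k\to\lambda_\infty\in[0,\lambda_1(I)]$. For $\varphi\in C_c^\infty(I)$ the left-hand side of \eqref{eqI} passes to the limit by strong $L^2$-convergence; the right-hand side is the main obstacle, and I would handle it by combining the a.e.\ convergence of $u_k e^{\alpha_k u_k^2}\varphi$ with a Vitali/equi-integrability argument on $\mathrm{supp}\,\varphi$, based on the uniform Moser--Trudinger bound $\int_I e^{\alpha_k u_k^2}dx\le C$ and the $L^q$-bounds on $u_k$ for every $q<\infty$. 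This delivers $L^1$-convergence to $u_\infty e^{\pi u_\infty^2}\varphi$, identifying $u_\infty$ as a weak (hence, by the regularity step above, classical) solution of \eqref{eqinfty}.
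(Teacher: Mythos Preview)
Your argument tracks the paper's closely, with one genuine difference and one gap.

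\smallskip

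\textbf{Difference (symmetry).} You obtain symmetry and monotonicity by replacing $u_k$ with its symmetric decreasing rearrangement $u_k^*$, invoking fractional P\'olya--Szeg\H{o}. The paper instead keeps $u_k$ as is and proves that any positive solution of \eqref{eqI} must be even and decreasing, via the moving-plane method. Both routes are valid here; yours is slightly more direct since the proposition is stated for a specific sequence of extremals rather than for arbitrary solutions, so you are free to replace $u_k$ by another extremal.

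\smallskip

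\textbf{Gap (passage to the limit in the nonlinearity).} Your equi-integrability claim for $u_ke^{\alpha_k u_k^2}$, justified only by the Moser--Trudinger bound $\int_I e^{\alpha_k u_k^2}\le C$ together with the $L^q$-bounds on $u_k$, does not go through. The Moser--Trudinger bound is only an $L^1$-bound on $e^{\alpha_k u_k^2}$ (since $\alpha_k\uparrow\pi$ there is no uniform $L^{p'}$-bound for any $p'>1$), and multiplying by the unbounded factor $u_k$ destroys any hope of extracting smallness on small sets from that alone: on $\{u_k>L\}$ one has $u_ke^{\alpha_k u_k^2}\ge L\,e^{\alpha_k u_k^2}$, and nothing you listed controls $\int_{\{u_k>L\}} u_k e^{\alpha_k u_k^2}$ from above. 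The paper closes this by keeping the factor $\lambda_k$ attached and using the energy identity you already derived, $\lambda_k\int_I u_k^2 e^{\alpha_k u_k^2}\,dx=1$: split $I$ into $\{u_k\le L\}$ (dominated convergence, since the integrand is $\le \lambda_1(I)Le^{\pi L^2}|\varphi|$) and $\{u_k>L\}$, where
\[
\int_{\{u_k>L\}} \lambda_k u_k e^{\alpha_k u_k^2}|\varphi|\,dx \le \frac{\|\varphi\|_{L^\infty}}{L}\int_I \lambda_k u_k^2 e^{\alpha_k u_k^2}\,dx=\frac{\|\varphi\|_{L^\infty}}{L},
\]
and let $L\to\infty$. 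So the fix is simply to invoke your own identity $1=\lambda_k\int_I u_k^2 e^{\alpha_k u_k^2}$ rather than the Moser--Trudinger inequality at this step.
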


\begin{proof}
For the first claim see Remark 1.4 in \cite{MMS}. The positivity follows from the maximum principle, and symmetry and monotonicity follow from the moving point technique, see e.g. \cite[Theorem 11]{DHMS}.  

Now testing \eqref{eqI} with $\varphi_1$, the first eigenfunction of $\hl$ in $\tilde H^{\frac12,2}(I)$, positive and with eigenvalue $\lambda_1(I)>0$, we obtain
$$\lambda_1(I)\int_I u_k\varphi_1dx=\lambda_k \int_I u_k e^{\alpha_k u_k^2}\varphi_1dx > \lambda_k \int_I u_k\varphi_1 dx,$$
hence proving \eqref{lambdak}. By the theorem of Banach-Alaoglu and the compactness of the Sobolev embedding of $\tilde H^{\frac12,2}(I)\hookrightarrow L^2(I)$, we obtain the claimed convergence of $u_k$ to $u_\infty$. Finally, to show that $u_\infty$ solves \eqref{eqinfty}, test with $\varphi\in C^\infty_c(I)$:
\[\begin{split}
\int_I u_\infty \hl \varphi dx&=\lim_{k\to\infty}\int_I u_k \hl \varphi dx\\
&=\lim_{k\to\infty}\int_I \lambda_k u_ke^{\alpha_k u_k^2}\varphi dx\\
&=\int_I \lambda_\infty u_\infty e^{\pi u_\infty^2}\varphi dx,
\end{split}\]
where the convergence of the last integral is justified by splitting $I$ into $I_1:=\{x\in I: u_k(x)\le L\}$ and $I_2:=\{x\in I:u_k(x)>L\}$, applying the dominated convergence on $I_1$ and bounding
\[\begin{split}
\int_{I_2} \lambda_ku_ke^{\alpha_k u_k^2}\varphi \, dx& \le \frac{\sup_I{|\varphi|}}{L}\int_{I}\lambda_k u_k^2e^{\alpha_k u_k^2}\,dx\\
&=\frac{\sup_I{|\varphi|}}{L}\int_{I} u_k \hl u_k\, dx\\
&=\frac{\sup_I{|\varphi|}}{L} \|\ql u_k\|_{L^2(\R)}^2,
\end{split}\]
and letting $L\to\infty$.
\end{proof}

Let $\tilde u_k$ be the harmonic extension of $u_k$ to $\R^2_+$ given by the Poisson integral, see \eqref{Poisson2} in the appendix.  Notice that
\begin{equation}\label{energy}
\int_I \lambda_k u_k^2e^{\alpha_k u_k^2}dx= \|\ql u_k\|_{L^2(\R)}^2=\|\nabla \tilde u_k\|_{L^2(\R^2_+)}^2=1.
\end{equation}
Let $r_k = \frac{1}{\alpha_k\lambda_k\mu_k^2e^{\alpha_k\mu_k^2}}$ and $\eta_k(x):= 2\alpha_k \mu_k(u_k(r_k x)-\mu_k)$ be as in \eqref{defmukrk} and \eqref{etak}, and set
$$\tilde \eta_k(x,y):= {2\alpha_k } \mu_k(\tilde u_k(r_k x,r_k y)-\mu_k).$$
Note that $\tilde \eta_k$ is the Poisson integral of $\eta_k$. 

\begin{prop}\label{conveta}
We have $r_k\to 0$ and $\tilde \eta_k\to \tilde \eta_\infty$ in $C^\ell_{\loc}(\overline{\R^2_+})$ for every $\ell\ge 0$, where
$$\tilde \eta_\infty(x,y)=-\log\bra{(1+y)^2+x^2}$$
is the Poisson integral (compare to \eqref{Poisson2}) of $\eta_\infty:=-\log\bra{1+x^2}$, and
\begin{equation}\label{propeta}
\hl \eta_\infty=2e^{\eta_\infty},\quad \int_{\R}e^{\eta_\infty}dx=\pi.
\end{equation}
\end{prop}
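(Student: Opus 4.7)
The plan is to rescale $u_k$ around its maximum point at scale $r_k$, derive an equation for $\eta_k$, and then extract a limit using the nonlocal regularity lemma announced in the introduction (Lemma~\ref{LemmaLs}).

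\emph{Step 1 (rescaling and equation for $\eta_k$).} I would first verify $r_k\to 0$ by contradiction: if $r_k\ge c>0$ along a subsequence, then \eqref{defmukrk} gives $\lambda_k\mu_k e^{\alpha_k\mu_k^2}\le C/\mu_k$, so the right-hand side of \eqref{eqI} is $O(1/\mu_k)$ in $L^\infty(I)$, and standard regularity for $\hl$ on $I$ with Dirichlet boundary condition forces $\mu_k$ to stay bounded, contradicting \eqref{ukinfty}. Next, combining \eqref{eqI}, the one-dimensional scaling $\hl(f(r_k\,\cdot))(x)=r_k(\hl f)(r_k x)$, the identity $u_k(r_k x)=\mu_k+\eta_k(x)/(2\alpha_k\mu_k)$ and the normalization \eqref{defmukrk}, a short computation yields
\begin{equation*}
\hl\eta_k(x)=2\left(1+\tfrac{\eta_k(x)}{2\alpha_k\mu_k^2}\right)\exp\!\left(\eta_k(x)+\tfrac{\eta_k(x)^2}{4\alpha_k\mu_k^2}\right)\qquad\text{on }\R,
\end{equation*}
with $\eta_k(0)=0$ and $\eta_k\le 0$ since $u_k$ attains its maximum at the origin.

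\emph{Step 2 (uniform local bounds and extraction of the limit).} The upper bound $\eta_k\le 0$ forces $\hl\eta_k$ to be uniformly bounded in $L^\infty(\R)$. Passing from this to a local $L^\infty$ bound on $\eta_k$ itself is the genuinely nonlocal feature of the problem and cannot be done by purely local estimates; Lemma~\ref{LemmaLs} provides exactly what is needed, controlling $\|\eta_k\|_{L^\infty(K)}$ in terms of $\|\hl\eta_k\|_{L^\infty(K')}$ (for $K\Subset K'\subset\R$) plus the weighted tail $\|\eta_k\|_{L_s(\R)}$. The tail bound is inherited from the scale invariance of the $H^{\frac12,2}$-seminorm in dimension one combined with the energy normalization \eqref{energy} and the anchor $\eta_k(0)=0$. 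Bootstrapping with fractional Schauder estimates upgrades this to uniform $C^\ell_{\loc}(\R)$ bounds for every $\ell\ge 0$, and a diagonal subsequence gives $\eta_k\to\eta_\infty$ in $C^\ell_{\loc}(\R)$.

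\emph{Step 3 (identification of $\eta_\infty$ and extensions).} Passing to the limit in the equation is direct: on any compact set the prefactor $1+\eta_k/(2\alpha_k\mu_k^2)$ and the exponential correction $e^{\eta_k^2/(4\alpha_k\mu_k^2)}$ tend uniformly to $1$, so $\hl\eta_\infty=2e^{\eta_\infty}$ with $\eta_\infty\le 0$ and $\eta_\infty(0)=0$. The change of variable $y=r_k x$ in \eqref{energy} yields
\begin{equation*}
\alpha_k=\int_{r_k^{-1}I}\left(1+\tfrac{\eta_k}{2\alpha_k\mu_k^2}\right)^2 e^{\eta_k+\frac{\eta_k^2}{4\alpha_k\mu_k^2}}\,dx,
\end{equation*}
so Fatou's lemma gives $\int_\R e^{\eta_\infty}dx\le\pi<\infty$. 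The classification of entire solutions of $\hl v=2e^v$ on $\R$ with finite mass (of Chen--Li--Ou type) then identifies $\eta_\infty(x)=-\log(1+x^2)$ as the unique such solution normalized by $\sup\eta_\infty=\eta_\infty(0)=0$, giving also $\int_\R e^{\eta_\infty}dx=\pi$ and hence \eqref{propeta}. Finally, $\tilde\eta_k\to\tilde\eta_\infty$ in $C^\ell_{\loc}(\overline{\R^2_+})$ follows from the Poisson representation, since local $C^\ell$-norms of the harmonic extensions are controlled by local norms of the trace together with a weighted $L^1$-tail in the normal variable, and both are under control by Step~2. The main technical obstacle throughout is the uniform local $L^\infty$ estimate of Step~2, for which the nonlocal Lemma~\ref{LemmaLs} is essential.
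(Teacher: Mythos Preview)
Your overall strategy (rescale, get local $C^\ell$ convergence plus an $L_s$ tail bound, identify the limit, then transfer to the Poisson extensions) is the same as the paper's, and Steps~1 and~3 are fine. But Step~2 contains two genuine errors.

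First, you misread Lemma~\ref{LemmaLs}: that lemma \emph{states} the conclusion $\sup_k\|\eta_k\|_{L_s(\R)}<\infty$; it is not a regularity estimate of the form ``$\|\eta_k\|_{L^\infty(K)}\le C\big(\|\hl\eta_k\|_{L^\infty(K')}+\|\eta_k\|_{L_s}\big)$''. In the paper's logic the order is reversed: one first proves the local $C^\alpha$ bound (Lemma~\ref{localconv}) from $\eta_k\le 0$, $\eta_k(0)=0$ and $\hl\eta_k\in L^\infty$ by a Harnack/barrier argument, and only \emph{afterwards} derives the $L_s$ bound via the commutator estimates (Lemmas~\ref{Strange Lemma}--\ref{lemmaDeltas2}, then Lemma~\ref{LemmaLs}).

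Second, your proposed mechanism for the tail bound does not work. The $\dot H^{\frac12,2}$-seminorm is indeed scale invariant in dimension one, but $\eta_k=2\alpha_k\mu_k\big(u_k(r_k\,\cdot)-\mu_k\big)$, so
\[
\|(-\Delta)^{1/4}\eta_k\|_{L^2(\R)}=2\alpha_k\mu_k\,\|(-\Delta)^{1/4}u_k\|_{L^2(\R)}=2\alpha_k\mu_k\to\infty,
\]
and the energy normalization \eqref{energy} gives no uniform $L_s$ control on $\eta_k$. The actual $L_s$ bound is the nontrivial step and relies on the commutator machinery, not on scaling. The paper's proof of Proposition~\ref{conveta} avoids reproducing all of this by citing \cite{MMS} for $r_k\to 0$, $\eta_k\to\eta_\infty$ in $C^\ell_{\loc}$ and the uniform $L_{1/2}$ bound; your Step~3, which uses exactly these two ingredients to pass to the Poisson extensions and identify $\tilde\eta_\infty$, is correct and matches the paper.
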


\begin{proof} According to Lemma 2.2, Theorem 1.5 and Proposition 2.7 in \cite{MMS}, we have $r_k\to 0$,  $\eta_k\to \eta_\infty$ in $C^\ell_{\loc}(\R)$ for every $\ell\ge 0$ and $(\eta_k)$ is uniformly bounded in $L_\frac12(\R)$ (see \eqref{L12}).  

To obtain the local convergence of $\tilde \eta_k$, fix $R>0$ and split the integral in the Poisson integral \eqref{Poisson2} of $\tilde \eta_k$ into an integral over $(-R,R)$ and an integral over $\R\setminus (-R,R)$, for $R$ large. The former is bounded by the convergence of $\eta_k$ locally, the latter by the boundedness of $\eta_k$ in $L_\frac12(\R)$, provided $(x,y)\in B_{\frac{R}{2}} \cap \R_+^2$. As a consequence we get that $\tilde \eta_k$ is locally uniformly bounded in $\overline{\R^2_+}$. Since $\tilde \eta_k$ is harmonic, we conclude by elliptic estimates.
\end{proof}

\begin{cor}\label{uinfty0} 
For $R>0$ and $i=0,1,2$, we have  
\begin{equation}\label{integrals}
\lim_{k\to \infty}\int_{-R r_k}^{R r_k} \lambda_k \mu_k^i u_k^{2-i} e^{\alpha_k u_k^2} dx =  \frac{1}{\pi} \int_{-R}^R e^{\eta_\infty} dx. 
\end{equation}
Moreover, $u_\infty\equiv 0$, i.e. up to a subsequence $u_k\to 0$ in $L^2(I)$, weakly in $\tilde H^{\frac12,2}(I)$, and a.e in $I$.
\end{cor}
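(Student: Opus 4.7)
The plan is to establish the rescaled limit \eqref{integrals} by a direct change of variables, then combine it with the conservation law $\int_I \lambda_k u_k^2 e^{\alpha_k u_k^2}\,dx = 1$ and the mass identity $\int_\R e^{\eta_\infty}\,dy=\pi$ from \eqref{propeta} to force $u_\infty \equiv 0$.

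For \eqref{integrals}, I will perform the substitution $x = r_k y$. From the definition \eqref{etak} one reads off $u_k(r_k y) = \mu_k + \eta_k(y)/(2\alpha_k\mu_k)$, which gives
\[
\alpha_k u_k(r_k y)^2 = \alpha_k \mu_k^2 + \eta_k(y) + \frac{\eta_k(y)^2}{4\alpha_k\mu_k^2},\qquad u_k(r_k y)^{2-i} = \mu_k^{2-i}\Bigl(1+\frac{\eta_k(y)}{2\alpha_k\mu_k^2}\Bigr)^{2-i}.
\]
Multiplying by the Jacobian $r_k$ and using the defining relation $r_k\lambda_k\mu_k^2 e^{\alpha_k\mu_k^2} = 1/\alpha_k$ of \eqref{defmukrk} collapses the rescaled integrand to
\[
\frac{1}{\alpha_k}\Bigl(1+\frac{\eta_k(y)}{2\alpha_k\mu_k^2}\Bigr)^{2-i} e^{\eta_k(y)}\, e^{\eta_k(y)^2/(4\alpha_k\mu_k^2)}.
\]
The uniform convergence $\eta_k \to \eta_\infty$ on $[-R,R]$ supplied by Proposition \ref{conveta}, together with $\mu_k\to\infty$ and $\alpha_k \to \pi$, allows me to pass to the limit by dominated convergence and conclude \eqref{integrals}.

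For the vanishing of $u_\infty$, I first note that testing \eqref{eqI} against $u_k$ gives $\int_I \lambda_k u_k^2 e^{\alpha_k u_k^2}\,dx = \|\ql u_k\|_{L^2(\R)}^2 = 1$. Using \eqref{integrals} with $i=0$ together with $\int_\R e^{\eta_\infty}\,dy = \pi$, for any $\delta>0$ and any $R>0$ one has $(-Rr_k,Rr_k)\subset(-\delta,\delta)$ for $k$ large (since $r_k\to 0$), so that
\[
\limsup_{k\to\infty}\int_{I\setminus(-\delta,\delta)} \lambda_k u_k^2 e^{\alpha_k u_k^2}\,dx \le 1 - \frac{1}{\pi}\int_{-R}^R e^{\eta_\infty}\,dy.
\]
Letting $R\to\infty$ the right hand side goes to $0$. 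Extracting a subsequence so that $u_k\to u_\infty$ a.e.\ on $I$ (afforded by the strong $L^2$ convergence) and applying Fatou then forces $\lambda_\infty u_\infty^2 e^{\pi u_\infty^2} = 0$ a.e.\ on $I\setminus(-\delta,\delta)$ for every $\delta>0$, so $\lambda_\infty u_\infty \equiv 0$ on $I$.

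If $\lambda_\infty>0$ this at once yields $u_\infty\equiv 0$. Otherwise \eqref{eqinfty} reduces to $\hl u_\infty = 0$ in $I$ with $u_\infty\in\tilde H^{\frac12,2}(I)$, and testing against $u_\infty$ itself gives $\|\ql u_\infty\|_{L^2(\R)}^2 = 0$, so $u_\infty$ is constant on $\R$ and hence identically zero by the vanishing boundary condition. I expect the only delicate part to be in the rescaling step for \eqref{integrals}, where one has to check that the error factors $(1+\eta_k/(2\alpha_k\mu_k^2))^{2-i}$ and $e^{\eta_k^2/(4\alpha_k\mu_k^2)}$ really converge uniformly to $1$ on $[-R,R]$, and in correctly ordering the two limits $k\to\infty$ then $R\to\infty$ in the concentration argument; the dichotomy on the sign of $\lambda_\infty$ is routine.
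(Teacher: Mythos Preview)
Your proof is correct and follows essentially the same approach as the paper: the change of variables $x=r_ky$ together with the relation \eqref{defmukrk} and the local uniform convergence $\eta_k\to\eta_\infty$ from Proposition \ref{conveta} give \eqref{integrals}, and then the splitting of the total mass $\int_I\lambda_ku_k^2e^{\alpha_ku_k^2}\,dx=1$ into the concentrating piece and its complement, combined with Fatou's lemma and the dichotomy on $\lambda_\infty$ via \eqref{eqinfty}, is exactly the paper's argument (the paper phrases the complement as $I\setminus(-Rr_k,Rr_k)$ rather than $I\setminus(-\delta,\delta)$, and is slightly terser in the $\lambda_\infty=0$ case, but these are cosmetic differences).
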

\begin{proof}
With the change of variables $\xi=\frac{x}{r_k}$, writing $u_k(r_k\cdot )=\mu_k+\frac{\eta_k}{2\alpha_k\mu_k}$ and using \eqref{defmukrk} and Proposition \ref{conveta}, we see that
\[
\int_{-R r_k}^{R r_k} \lambda_k \mu_k^i u_k^{2-i} e^{\alpha_k u_k^2} dx = \underbrace{ r_k \lambda_k \mu_k^2 e^{\alpha_k \mu_k^2}  }_{=\frac{1}{\alpha_k}} \int_{-R}^R  \left(1+\frac{\eta_k}{2\alpha_k\mu_k^2}\right)^{2-i} e^{\eta_k + \frac{\eta_k}{4\alpha_k \mu_k^2}} d\xi\to \frac{1}{\pi}\int_{-R}^R e^{\eta_\infty} d\xi,
\]
as $k\to \infty$, as claimed in \eqref{integrals}.

In order to prove the last statement, recalling that $\|\ql u_k\|_{L^2}=1$, we write
$$1=\int_{-Rr_k}^{Rr_k}\lambda_k u_k^2 e^{\alpha_ku_k^2}dx+\int_{I\setminus (-Rr_k,Rr_k)}\lambda_k u_k^2 e^{\alpha_ku_k^2} dx=:(I)_k+(II)_k.$$
By \eqref{integrals} and \eqref{propeta} we get 
\[\begin{split}\lim_{k\to\infty}(I)_k   = \frac{1}{\pi}\int_{-R}^R e^{\eta_\infty}dx =1+o(1),
\end{split}\]
with $o(1)\to 0$ as $R\to\infty$. This in turn implies that
$$\lim_{R\to\infty}\lim_{k\to\infty}(II)_k= 0,$$
which  is possible only if $u_\infty\equiv 0$, or $\lambda_\infty=0$ (by Fatou's lemma). But on account of \eqref{eqinfty}, also in the latter case we have $u_\infty\equiv 0$.
\end{proof}


\begin{lemma}\label{lemma1A}
For $A>1$, set  $u_k^A:=\min\left\{u_k,\frac{\mu_k}{A}\right\}$. Then we have
\begin{equation}\label{A1}
\limsup_{k\to \infty}\|\ql u_k^A\|_{L^2(\R)}^2 \le \frac{1}{A}.
\end{equation}
\end{lemma}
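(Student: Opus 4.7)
The plan is to first bound $\|\ql u_k^A\|_{L^2(\R)}^2$ above by the weighted integral $\int_I \lambda_k u_k u_k^A e^{\alpha_k u_k^2}\,dx$ via the harmonic extension, and then exploit the concentration of $u_k$ around the origin described by Corollary \ref{uinfty0} to extract the factor $1/A$.

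For the first step I would exploit the harmonic extension. The function $V_k(x,y):=\min\{\tilde u_k(x,y),\mu_k/A\}$ has boundary trace $V_k(\cdot,0)=u_k^A$, so minimality of the Dirichlet energy of the harmonic extension $\widetilde{u_k^A}$ of $u_k^A$ gives
\[
\|\ql u_k^A\|_{L^2(\R)}^2=\int_{\R^2_+}|\nabla\widetilde{u_k^A}|^2\,dxdy\le\int_{\R^2_+}|\nabla V_k|^2\,dxdy=\int_{\{\tilde u_k<\mu_k/A\}}|\nabla\tilde u_k|^2\,dxdy.
\]
An integration by parts, using that $\tilde u_k$ is harmonic with $-\partial_y\tilde u_k|_{y=0}=\hl u_k$, followed by the Euler-Lagrange equation \eqref{eqI} and the fact that $u_k^A\equiv 0$ outside $I$, yields
\[
\|\ql u_k^A\|_{L^2(\R)}^2\le\int_\R u_k^A\,\hl u_k\,dx=\int_I\lambda_k u_k u_k^A e^{\alpha_k u_k^2}\,dx.
\]

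For the second step I would split the right-hand side into a "core" $(-Rr_k,Rr_k)$ and its complement, with $R>0$ eventually sent to $\infty$. By Proposition \ref{conveta} and the definition of $\eta_k$ one has $u_k(Rr_k)/\mu_k\to 1$ for each fixed $R$, so by the monotonicity of $u_k$ in $|x|$, for $k$ large enough $u_k(x)\ge u_k(Rr_k)>\mu_k/A$ throughout the core; hence $u_k^A\equiv\mu_k/A$ there and the core contribution equals
\[
\frac{\mu_k}{A}\int_{-Rr_k}^{Rr_k}\lambda_k u_k e^{\alpha_k u_k^2}\,dx\longrightarrow\frac{1}{A\pi}\int_{-R}^R e^{\eta_\infty}\,d\xi
\]
by Corollary \ref{uinfty0} with $i=1$. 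For the tail I would bound $u_k^A\le u_k$ and use the normalization $\int_I\lambda_k u_k^2 e^{\alpha_k u_k^2}\,dx=1$ together with Corollary \ref{uinfty0} for $i=0$ to obtain
\[
\int_{I\setminus(-Rr_k,Rr_k)}\lambda_k u_k u_k^A e^{\alpha_k u_k^2}\,dx\le 1-\int_{-Rr_k}^{Rr_k}\lambda_k u_k^2 e^{\alpha_k u_k^2}\,dx\longrightarrow 1-\frac{1}{\pi}\int_{-R}^R e^{\eta_\infty}\,d\xi.
\]
Summing the two contributions and letting $R\to\infty$, using $\int_\R e^{\eta_\infty}\,dx=\pi$ from \eqref{propeta}, gives $\limsup_k\|\ql u_k^A\|_{L^2(\R)}^2\le 1/A$.

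The main delicate point is the first step. In the local setting one has the pointwise cutoff identity $|\nabla u_k^A|^2=\chi_{\{u_k<\mu_k/A\}}|\nabla u_k|^2$, but the fractional Laplacian does not enjoy such a direct Leibniz/cutoff relation on the trace. The cleanest way around this is to truncate the harmonic extension $\tilde u_k$ on the half-plane (rather than $u_k$ on the trace) and appeal to the minimizing property of $\widetilde{u_k^A}$ among functions with the same boundary values. One also has to justify that the boundary terms in the integration by parts at infinity in $\R^2_+$ vanish, which is the case because $\tilde u_k$ is the Poisson integral of $u_k\in\tilde H^{\frac12,2}(I)$ and therefore decays suitably at infinity.
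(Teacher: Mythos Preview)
Your proposal is correct and follows essentially the same approach as the paper: both reduce to the identity $\|\ql u_k^A\|_{L^2(\R)}^2\le\int_\R u_k^A\,\hl u_k\,dx=\int_I\lambda_k u_k u_k^A e^{\alpha_k u_k^2}\,dx$ by truncating the harmonic extension and integrating by parts, and then exploit the concentration described by Corollary~\ref{uinfty0}. The only minor difference lies in the second step: the paper introduces the complementary function $v_k^A=(u_k-\mu_k/A)^+$, proves lower bounds for both $\int_\R u_k^A\,\hl u_k\,dx$ and $\int_\R v_k^A\,\hl u_k\,dx$ on the core, and then uses that they sum to $1$ to pin down the exact limit $1/A$; you instead bound the tail directly via $u_k^A\le u_k$ and the energy normalization, which is slightly more economical since only the $\limsup$ is claimed.
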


\begin{proof}
We set $\bar u_k^A:=\min\left\{\tilde u_k,\frac{\mu_k}{A}\right\}$. 
Since $\bar u_{k}^A$ is an extension {(in general not harmonic)} of $u_k^A$, we have 
\begin{equation}\label{toprove1}
\|\ql u_k^A\|_{L^2(\R)}^2  \le \int_{\R^2_+} |\nabla \bar u_k^A|^2 dxdy.
\end{equation}
Using integration by parts and the harmonicity of $\tilde u_k$ we get
\begin{equation}\label{toprove2}
\begin{split}
\int_{\R^2_+}|\nabla \bar u_k^A|^2dx dy&=\int_{\R^2_+}\nabla  \bar u_k^A \cdot \nabla \tilde u_k dxdy\\
&= - \int_{\R} u_k^A(x) \frac{\de \tilde u_k(x,0)}{\de y}dx\\
&=\int_{\R{}}(-\Delta)^{\frac 12} u_k u_k^Adx.
\end{split}
\end{equation}
Proposition \ref{conveta} implies that $u_k^A(r_kx)=\frac{\mu_k}{A}$ for $|x|\le R$ and $k\ge k_0(R)$. Then, with  \eqref{propeta} and \eqref{integrals}
we obtain 
\[
\begin{split}
\int_{\R{}}(-\Delta)^{\frac 12} u_k u_k^A\, dx&\geq \int_{-Rr_k}^{Rr_k} \lambda_k  u_k e^{\alpha_k u_k^2}u_k^A dx\\&
\stackrel{k\to\infty}{\to} \frac{1}{\pi A}\int_{-R}^R e^{\eta_\infty} d\xi\\
&\stackrel{R\to\infty}{\to} \frac{1}{A}.
\end{split}
\]

Set now $v_k^A:=\left(u_k-\frac{\mu_k}{A}\right)^+ = u_k - u_k^{A}$. With  similar computations we get
\[
\begin{split}
\int_{\R{}}(-\Delta)^{\frac 12} u_k v_k^A dx&\geq \int_{-Rr_k}^{Rr_k} \lambda_k u_k v_k^A e^{\alpha_k u_k^2} dx\\
&\stackrel{k\to\infty}{\to}\frac{1}{\pi}\bra{1-\frac{1}{A}} \int_{-R}^R e^{\eta_\infty}d\xi\\
&\stackrel{R\to\infty}{\to} \frac{A-1}{A}.
\end{split}
\]
Since
$$
\int_{\R{}}(-\Delta)^{\frac 12} u_k u_k^A\, dx+ \int_{\R{}}(-\Delta)^{\frac 12} u_k v_k^A dx=\int_{\R}(-\Delta)^{\frac 12} u_k u_k dx = 1,$$
we get that
\[
\lim_{k\to \infty} \int_{\R{}}(-\Delta)^{\frac 12} u_k u_k^A\, dx = \frac{1}{A}.
\] 
Then, we conclude using \eqref{toprove1}, and \eqref{toprove2}.
\end{proof}

\begin{prop}\label{stimaalto} We have
\begin{equation}\label{Cglim}
C_\pi= \lim_{k\to\infty}\frac{1}{\lambda_k\mu_k^2}.
\end{equation}
Moreover
\begin{equation}\label{muklambdak}
\lim_{k\to\infty} \mu_k \lambda_k =0.
\end{equation}
\end{prop}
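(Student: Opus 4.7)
The plan is to pin down $\lim_k \lambda_k\mu_k^2 = 1/C_\pi$ via a level--set splitting at height $\mu_k/A$ with $A>1$, letting $A\to 1^+$ at the very end. The second identity \eqref{muklambdak} will then be an immediate corollary of \eqref{Cglim} together with $\mu_k\to\infty$.

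The first input I would establish is the vanishing of the low--level part of the concentrating measure, namely
\[
\int_{\{u_k\le\mu_k/A\}}\lambda_k u_k^2 e^{\alpha_k u_k^2}\,dx \xrightarrow[k\to\infty]{} 0.
\]
Indeed Proposition \ref{conveta} gives $u_k(r_k\xi)/\mu_k\to 1$ uniformly on compact sets, so the monotonicity of $u_k$ forces $(-Rr_k, Rr_k)\subset\{u_k>\mu_k/A\}$ for every $R>0$ and $k$ large; then \eqref{integrals} (with $i=0$) and $\int_\R e^{\eta_\infty}\,dx=\pi$ show that $\lambda_k u_k^2 e^{\alpha_k u_k^2}$ concentrates all of its unit mass on $(-Rr_k,Rr_k)$ as $R\to\infty$.

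Next I would use the key limit extracted from the proof of Lemma \ref{lemma1A}, namely $\int_I \lambda_k u_k u_k^A e^{\alpha_k u_k^2}\,dx\to 1/A$. Decomposing $u_k u_k^A$ on $\{u_k\le\mu_k/A\}$ (where $u_k u_k^A=u_k^2$) and on $\{u_k>\mu_k/A\}$ (where $u_k u_k^A=(\mu_k/A) u_k$), and discarding the first piece via the previous step, I obtain
\[
\mu_k \lambda_k \int_{\{u_k>\mu_k/A\}} u_k e^{\alpha_k u_k^2}\,dx \longrightarrow 1.
\]
Since $\mu_k/A<u_k\le \mu_k$ on $\{u_k>\mu_k/A\}$, setting $J_k:=\int_{\{u_k>\mu_k/A\}} e^{\alpha_k u_k^2}\,dx$ yields the sandwich
\[
\frac{\lambda_k\mu_k^2}{A}\, J_k \;\le\; 1+o(1)\;\le\;\lambda_k \mu_k^2\, J_k.
\]

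The final step is to identify $J_k\to C_\pi$, which I would do via Vitali's theorem. The pointwise convergence $u_k\to 0$ a.e.\ on $I$ (Corollary \ref{uinfty0}) combined with $\mu_k\to\infty$ gives $\mathbf 1_{\{u_k\le\mu_k/A\}}e^{\alpha_k u_k^2}\to 1$ a.e.; meanwhile Lemma \ref{lemma1A} yields $\limsup_k\|\ql u_k^A\|_{L^2}^2\le 1/A<1$, so Theorem \ref{MT3} applied to $u_k^A/\|\ql u_k^A\|_{L^2}$ bounds $e^{p\alpha_k(u_k^A)^2}$ in $L^1(I)$ for some $p>1$, providing the uniform integrability needed for Vitali. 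Hence $\int_{\{u_k\le\mu_k/A\}}e^{\alpha_k u_k^2}\,dx\to |I|=2$, and since $\int_I e^{\alpha_k u_k^2}\,dx=C_{\alpha_k}+2\to C_\pi+2$, we conclude $J_k\to C_\pi$. Feeding this into the sandwich and sending $A\to 1^+$ gives $\lim_k \lambda_k\mu_k^2 = 1/C_\pi$, which is \eqref{Cglim}; the second claim \eqref{muklambdak} then follows from $\mu_k\lambda_k = (\lambda_k\mu_k^2)/\mu_k\to 0$. The delicate point is the uniform integrability for Vitali, which crucially rests on the subcritical norm bound $\|\ql u_k^A\|_{L^2}^2\le 1/A + o(1)$ supplied by Lemma \ref{lemma1A}; once this is in place, the sandwich and the final $A\to 1^+$ limit are straightforward.
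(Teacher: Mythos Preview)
Your proposal is correct and follows essentially the same strategy as the paper: a level-set split at height $\mu_k/A$, Vitali on the low part via the subcritical bound of Lemma \ref{lemma1A}, and $A\to 1^+$ at the end. The only difference is organizational: you obtain both inequalities simultaneously by sandwiching $J_k$ via the limit $\int_I \lambda_k u_k u_k^A e^{\alpha_k u_k^2}\,dx\to 1/A$ extracted from the proof of Lemma \ref{lemma1A}, whereas the paper gets the upper bound directly from $(II)\le \frac{A^2}{\lambda_k\mu_k^2}$ and the lower bound straight from \eqref{integrals} with $i=2$.
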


\begin{proof} Fix $A>1$ and let $u_k^A$ be defined as in Lemma \ref{lemma1A}.  We split
\[\begin{split}
\int_{I}\bra{e^{\alpha_k u_k^2}-1}dx&= \int_{I\cap\{u_k\le\frac{\mu_k}{A}\}}\bra{e^{\alpha_k (u_k^A)^2}-1}dx
 +\int_{I\cap\{u_k>\frac{\mu_k}{A}\}}\bra{e^{\alpha_k u_k^2}-1}dx=:(I)+(II).
\end{split}\]
Using Corollary \ref{uinfty0} and Vitali's theorem, we see that
$$(I)\le  \int_{I}\bra{e^{\alpha_k (u_k^A)^2}-1}dx\to 0\quad \text{as }k\to\infty,$$
since $e^{\alpha_k (u_k^A)^2}$ is uniformly bounded in $L^A(I)$ by Lemma \ref{lemma1A} together with Theorem \ref{MT3}.

By \eqref{energy} and Corollary \ref{uinfty0}, we now estimate
\[
(II)\le \frac{A^2}{\lambda_k \mu_k^2} \int_{I\cap\{u_k>\frac{\mu_k}{A}\}}\lambda_k u_k^2 \left( e^{\alpha_k u_k^2}-1\right) dx
\le \frac{A^2}{\lambda_k \mu_k^2} (1+o(1)),
\]
with $o(1)\to 0$ as $k\to\infty$.
 Together with \eqref{convext}, and letting $A\downarrow 1$, this gives
$$C_\pi\le \lim_{k\to\infty}\frac{1}{\lambda_k\mu_k^2}.$$
The converse inequality follows from \eqref{integrals} as follows:
\[\begin{split}
\int_{I}\bra{e^{\alpha_k u_k^2}-1}dx&\ge \int_{-Rr_k}^{Rr_k}e^{\alpha_k u_k^2}dx +o(1)\\
&=\frac{1}{\lambda_k\mu_k^2} \left( \frac{1}{\pi} \int_{-R}^R e^{\eta_\infty}dx +o(1)\right) +o(1),
\end{split}\]
with $o(1)\to 0$ as $k\to\infty$. Letting $R\to\infty$ and recalling \eqref{propeta} we obtain \eqref{Cglim}.

Finally, \eqref{muklambdak} follows at once from \eqref{Cglim}, because otherwise we would have $C_\pi=0$, which is clearly impossible.
\end{proof}

\begin{prop}\label{convdelta0} 
Let us set 
$f_k :=\lambda_k \mu_k u_k e^{\alpha_k u_k^2}$. Then we have 
$$
\int_{I} f_k \ph \, dx \to \ph(0),
$$
as $k\to \infty$,  for any $\ph \in C(\bar I)$. In particular, $f_k \rightharpoonup \delta_0$ 
in the sense of Radon measures in $I$.
\end{prop}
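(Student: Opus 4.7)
The plan is to prove that $f_k\,dx$ converges weakly-$*$ to $\delta_0$ in the sense of Radon measures by establishing two ingredients: (i) for every $\delta>0$, $\int_{I\setminus(-\delta,\delta)} f_k\,dx\to 0$ (mass concentrates at the origin), and (ii) $\int_I f_k\,dx\to 1$ (total mass converges to $1$). Once both are in hand, for $\varphi\in C(\bar I)$ and $\varepsilon>0$ I pick $\delta$ so that $|\varphi-\varphi(0)|<\varepsilon$ on $(-\delta,\delta)$; then
$$
\left|\int_I f_k\varphi\,dx-\varphi(0)\int_I f_k\,dx\right|\le \varepsilon\int_I f_k\,dx+2\|\varphi\|_\infty\int_{I\setminus(-\delta,\delta)}f_k\,dx,
$$
which together with (i), (ii) yields $\int_I f_k\varphi\,dx\to\varphi(0)$ by letting $k\to\infty$ and then $\varepsilon\to 0$.

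For (i), the key tool is Lemma \ref{lemma1A}: since $\limsup_k\|\ql u_k^A\|_{L^2}^2\le 1/A$ for $A>1$, rescaling and applying Theorem \ref{MT3} gives $\int_I e^{\alpha_k(u_k^A)^2}\,dx\le C$ uniformly in $k$ (for $k$ large). On $\{u_k\le\mu_k/A\}$ one has $u_k=u_k^A$, so combining this with $\lambda_k\mu_k^2\to 1/C_\pi$ from \eqref{Cglim} yields
$$
\int_{\{u_k\le\mu_k/A\}} f_k\,dx\le \frac{\lambda_k\mu_k^2}{A}\int_I e^{\alpha_k(u_k^A)^2}\,dx\le \frac{C}{A}.
$$
Since $u_k$ is symmetric decreasing and $u_k\to 0$ in $L^2(I)$, Chebyshev yields $|\{u_k>\mu_k/A\}|\le A^2\|u_k\|_{L^2}^2/\mu_k^2\to 0$, so the super-level set $\{u_k>\mu_k/A\}$, being a symmetric interval, is eventually contained in $(-\delta,\delta)$. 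Therefore $I\setminus(-\delta,\delta)\subset\{u_k\le\mu_k/A\}$ for $k$ large, and $\int_{I\setminus(-\delta,\delta)}f_k\le C/A$; letting $A\to\infty$ proves (i).

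For (ii), the lower bound $\liminf_k\int_I f_k\ge 1$ follows from Corollary \ref{uinfty0} with $i=1$ and \eqref{propeta}, since $\int_I f_k\ge\int_{-Rr_k}^{Rr_k}f_k\to \pi^{-1}\int_{-R}^R e^{\eta_\infty}\,d\xi\to 1$ as $R\to\infty$. For the upper bound I decompose
$$
\int_I f_k=\int_{-Rr_k}^{Rr_k}f_k+\int_{(-\delta,\delta)\setminus(-Rr_k,Rr_k)}f_k+\int_{I\setminus(-\delta,\delta)}f_k
$$
and split the middle integral further by the level $\mu_k/A$: on $\{u_k>\mu_k/A\}$ the pointwise bound $\lambda_k\mu_k u_k\le A\lambda_k u_k^2$ combined with Corollary \ref{uinfty0} ($i=0$) and the energy identity \eqref{energy} bounds the contribution by $A\bigl(1-\pi^{-1}\int_{-R}^R e^{\eta_\infty}\,d\xi\bigr)+o_k(1)$, while on $\{u_k\le\mu_k/A\}$ the contribution is $\le C/A$ as in (i). Sending $k\to\infty$, then $R\to\infty$ (which annihilates the factor multiplied by $A$), then $A\to\infty$, gives $\limsup_k\int_I f_k\le 1$.

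The main obstacle is this upper bound: $f_k$ cannot be dominated by $\lambda_k u_k^2 e^{\alpha_k u_k^2}$ since the factor $\mu_k/u_k$ is unbounded on $I$. The level-set splitting trades this unbounded factor for a constant $A$ times an energy tail that becomes small thanks to the concentration of $\lambda_k u_k^2 e^{\alpha_k u_k^2}$ at scale $r_k$; the three successive limits $k\to\infty$, $R\to\infty$, $A\to\infty$ are ordered precisely to balance the resulting errors.
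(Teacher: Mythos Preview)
Your proof is correct and follows essentially the same strategy as the paper: both split the integral according to the scale $(-Rr_k,Rr_k)$ and the level set $\{u_k>\mu_k/A\}$, controlling the inner piece via \eqref{integrals}, the high outer piece via the energy identity \eqref{energy}, and the low piece via Lemma \ref{lemma1A} together with Theorem \ref{MT3}. The only minor difference is in the treatment of the low region $\{u_k\le\mu_k/A\}$: the paper uses $\lambda_k\mu_k\to 0$ from \eqref{muklambdak} to get $I_3\to 0$ directly for fixed $A$, whereas you use the boundedness of $\lambda_k\mu_k^2$ from \eqref{Cglim} to obtain a $C/A$ bound and then send $A\to\infty$---both are consequences of Proposition \ref{stimaalto}, and the paper's route avoids one of your limits.
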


\begin{proof} Take $\ph \in {C}(\bar I)$. For given $R>0$, $A>1$, we split
\[\begin{split}
\int_I \varphi f_k  dx &=\int_{-Rr_k}^{Rr_k} \varphi f_k \, dx+ \int_{\{u_k> \frac{\mu_k}{A}\}\setminus (-Rr_k, R r_k)} \varphi f_k  \, dx+ \int_{\{u_k\le  \frac{\mu_k}{A}\}} \varphi f_k \, dx\\
&=: I_1 +I_2+I_3.
\end{split}\]
On $\{u_k\le  \frac{\mu_k}{A}\}$ we have $u_k= u_k^A$ and Lemma \ref{lemma1A} and Theorem \ref{MT3} imply that $u_k e^{\alpha_k u_k^2}$ is uniformly bounded in $L^1$ (depending on $A$). Thus using \eqref{muklambdak} we get $I_3\to 0$.

With \eqref{energy} and \eqref{integrals} we also get
\[\begin{split}
I_2&\le A \|\varphi\|_{L^\infty(I)}\int_{\{u_k> \frac{\mu_k}{A}\}\setminus (-Rr_k,Rr_k )}\lambda_k u_k^2e^{\alpha_k u_k^2} \, dx\\
&\le A \|\varphi\|_{L^\infty(I)}\bra{1-\int_{-Rr_k}^{Rr_k}\lambda_k u_k^2e^{\alpha_k u_k^2}\, dx}\\
&=A \|\varphi\|_{L^\infty(I)}\bra{1-\frac{1}{\pi}\int_{-R}^Re^{\eta_\infty}dx+o(1)}
\end{split}\]
with $o(1)\to 0$ as $k\to \infty$. Thanks to \eqref{propeta}, we conclude that $I_2\to 0$ as $k\to\infty$ and $R\to \infty$.

As for $I_1$, again with \eqref{integrals} we compute
$$I_1=(\varphi(0)+o(1))\bra{\frac{1}{\pi}\int_{-R}^R e^{\eta_\infty}dx +o(1)},$$
so that $I_1\to\varphi(0)$ as $k\to\infty$ and $R\to\infty$.
\end{proof}

Given $x\in I$, let $G_x: \R\setminus \{0\} \to\R$ be the Green's function of $\hl$ on $I$ with singularity at $x$.  We recall that we have the explicit formula (see e.g. \cite{BGR})
\begin{equation}\label{FormulaGI}
G_x(y):= \begin{cases}
\frac{1}{\pi} \log \left(\frac{1-x y {+} \sqrt{(1-x^2)(1-y^2)}}{|x-y|}\right), &   y\in I,
\\ 0  & y \in \R \setminus I. 
\end{cases}
\end{equation}
In the following we further denote 
\begin{equation}\label{defS}
S(x,y):= G_{x}(y)- \frac{1}{\pi} \log \frac{1}{|x-y|}. 
\end{equation}


\begin{lemma}\label{leconvgreen} We have $\mu_k  u_k \rightarrow  G:=G_0$  in $L^\infty_{\loc}(\ov I \setminus \{0\}) \cap L^1(I)$ as $k\to +\infty$.
\end{lemma}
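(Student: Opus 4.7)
The plan is to use the Green's function representation
\[
\mu_k u_k(y)=\int_I G_y(z)\, f_k(z)\,dz,\qquad y\in\bar I,
\]
which comes from $\hl u_k=f_k/\mu_k$ on $I$ together with $u_k\equiv 0$ on $\R\setminus I$. Proposition \ref{convdelta0} says $f_k\rightharpoonup \delta_0$, so formally $\mu_k u_k(y)\to G_0(y)=G(y)$. The difficulty is the logarithmic singularity of $z\mapsto G_y(z)$ at $z=y$ (which lies \emph{away} from the concentration point $0$) and the need for uniform, not merely pointwise, convergence on compact subsets of $\bar I\setminus\{0\}$.

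For the $L^\infty_{\loc}(\bar I\setminus\{0\})$ statement I would fix a compact $K\subset \bar I\setminus\{0\}$, set $2\delta:=\dist(K,0)>0$, and split the representation at $|z|=\delta$. On $\{|z|\le\delta\}$ the map $(y,z)\mapsto G_y(z)$ is jointly continuous on $K\times[-\delta,\delta]$ and $\sup_{y\in K,\,|z|\le\delta}|G_y(z)-G_y(0)|\le\omega(\delta)\to 0$; combined with $\int_{|z|\le\delta}f_k\to 1$ (which follows from $f_k\ge 0$, $\int_I f_k\to 1$ and Proposition \ref{convdelta0} applied to a continuous cutoff of $0$), this inner piece converges to $G(y)$ up to an error $\omega(\delta)$, uniformly in $y\in K$. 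For the outer piece $\int_{\{|z|>\delta\}\cap I}G_y f_k\,dz$ the point is to show $f_k\to 0$ in $L^\infty(\{|z|>\delta\}\cap I)$: by monotonicity of $u_k$ and $u_k\to 0$ in $L^2(I)$ (Corollary \ref{uinfty0}),
\[
\sup_{|z|\ge\delta}u_k(z)=u_k(\delta)\le \frac{\|u_k\|_{L^2(I)}}{\sqrt{2\delta}}\to 0,
\]
so $e^{\alpha_k u_k^2}\le 2$ on that set for $k$ large, and since $\mu_k\lambda_k\to 0$ by \eqref{muklambdak} the definition $f_k=\mu_k\lambda_k u_k e^{\alpha_k u_k^2}$ gives the claim. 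Because $\sup_{y\in K}\|G_y\|_{L^1(I)}<\infty$ (the norm equals $h(y)$ below, continuous on $\bar I$), the outer piece is $o(1)$ uniformly in $y\in K$. Letting first $k\to\infty$ and then $\delta\to 0$ yields uniform convergence on $K$.

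For the $L^1(I)$ convergence I would invoke Fubini (everything is non-negative) and the symmetry $G_y(z)=G_z(y)$ to write
\[
\int_I \mu_k u_k(y)\,dy=\int_I h(z) f_k(z)\,dz,\qquad h(z):=\int_I G_z(y)\,dy,
\]
where $h$ is continuous on $\bar I$ (it is the solution of $\hl h=1$ in $I$ with $h\equiv 0$ outside). Proposition \ref{convdelta0} then gives $\int_I \mu_k u_k\,dy\to h(0)=\int_I G\,dy$. Combined with $\mu_k u_k\ge 0$ and the a.e.\ convergence $\mu_k u_k\to G$ from the previous step, Scheff\'e's lemma upgrades convergence of $L^1$-norms to convergence in $L^1(I)$.

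The main obstacle is the first step: Proposition \ref{convdelta0} cannot be applied directly to the unbounded test function $\varphi=G_y$, so the splitting at $|z|=\delta$ is essential, and the quantitative uniform decay of $f_k$ off the origin (via monotonicity, $L^2$-convergence of $u_k$, and $\mu_k\lambda_k\to 0$) is the decisive input.
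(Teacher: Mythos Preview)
Your $L^\infty_{\loc}$ argument follows the paper's almost verbatim (Green representation, split near/far from $0$, use joint continuity of $G_y(z)$ on the inner piece and $\|f_k\|_{L^\infty(\{|z|>\delta\})}\to 0$ on the outer piece). One small slip: you set $2\delta:=\dist(K,0)$ and then write ``letting $\delta\to 0$'', which is inconsistent. The fix is the paper's: keep $\delta$ fixed by $K$ and split instead at a free scale $\eps\in(0,\delta)$, so that the inner error is $\omega(\eps)$ and you can send $\eps\to 0$ after $k\to\infty$.

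Your $L^1$ argument is genuinely different from the paper's and cleaner. The paper integrates the pointwise estimate for $|v_k(x)|$, uses Fubini, and then needs the quantitative bound $\int_I|G_x(y)-G(x)|\,dx=O(|y|\log\tfrac{1}{|y|})$ for small $|y|$ (splitting into the logarithmic part and the smooth remainder $S$). You bypass this computation: by symmetry and Fubini, $\int_I\mu_k u_k=\int_I h\,f_k$ with $h(z)=\int_I G_z(y)\,dy\in C(\bar I)$, so Proposition~\ref{convdelta0} applies directly to give $\int_I\mu_k u_k\to h(0)=\int_I G$; then non-negativity and a.e.\ convergence feed Scheff\'e's lemma. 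This avoids all the explicit Green-function integral estimates, at the cost of invoking Scheff\'e rather than getting a rate.
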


\begin{proof}
Let us set $v_k:= \mu_k u_k - G$ and $f_k = \mu_k \lambda_k u_k e^{\alpha_k u_k^2}$.  Arguing as in Proposition \ref{convdelta0}, we show that $\|f_k\|_{L^1(\R)}\to 1$ as $k\to \infty$. Moreover, since $u_k$ is decreasing with respect to $|x|$, we get that $u_k\to 0$ and $f_k\to 0$ locally uniformly in $\ov I \setminus \{0\}$ as $k\to \infty$.  By Green's representation formula, we have 
\begin{equation}\label{Green1}
\begin{split}
|v_k(x)| & =  \left| \int_{I} G_x(y) f_k(y)\,dy - G(x) \right| \\
 & \le \int_{I}  |G_x(y)-G(x)| f_k(y)  \,dy   + |\|f_k\|_{L^1(I)}-1|\, |G(x)|, \quad x\in I. 
\end{split}
\end{equation}
Fix $\sigma\in (0,1)$. If we assume  $|x|\ge \sigma$,   $|y|\le \frac{\sigma}{2}$, then we have 
\begin{equation}\label{Grenn2}\begin{split}
|G_x(y)-G(x)|& \le \frac{1}{\pi}\left|\log \frac{|x|}{|x-y|}\right| + |S(x,y)-S(x,0)| \\
& \le \frac{1}{\pi}\left|\log \left|\frac{x}{|x|} - \frac{y}{|x|} \right|\right|  + \sup_{|x|\ge \sigma, |y|\le  \frac{\sigma}{2}} |\nabla_y S(x,y)| |y| \\
& \le C |y|,
\end{split}\end{equation}
where $C$ is a constant depending only on $\sigma$. 
Then, for any $\eps \in (0,\frac{\sigma}{2})$,  we can write 
\begin{equation}\label{Green3}\begin{split}
|v_k(x)|& \le \int_{I}  |G_x(y)-G(x)| f_k(y)  \,dy + o(1) \\
&  = \int_{-\eps}^\eps  |G_x(y)-G(x)| f_k(y)  \,dy + \int_{I\setminus(-\eps,\eps)}  |G_x(y)-G(x)| f_k(y)  \,dy +o(1)\\
& \le C \eps  \|f_k\|_{L^1(-\eps,\eps)} +  \bra{\sup_{z\in I}\|G_z\|_{L^1(I)} +|G(x)|}\|f_k\|_{L^\infty(I \setminus (-\eps,\eps))}+o(1) \\
& \le  C \eps +o(1),
\end{split}\end{equation}
where $o(1)\to 0$ uniformly in $ I\setminus (-\sigma,\sigma)$ as $k\to \infty$. Clearly, \eqref{Green3} implies 
$$
\limsup_{k\to \infty} \|v_k\|_{L^\infty(I\setminus (-\sigma,\sigma))} \le C \eps. 
$$
Since $\eps$ and $\sigma$ can be  arbitrarily small, this shows that $v_k\to 0$ in $L^\infty_{\loc}(\ov I \setminus \{0\})$. With a similar argument, we prove the $L^1$ convergence. Indeed, integrating   \eqref{Green1}, for $\eps\in (0,1)$ we get
\begin{equation}\label{Green4}\begin{split}
\|v_k\|_{L^1(I)} &\le \int_I  \int_I |G_x(y)-G(x)|f_k(y ) \, dy\, dx  + | \|f_k\|_{L^1(I)} -1| \|G\|_{L^1(I)} \\
& \le  \int_I  f_k(y)  \int_{I}  |G_x(y)-G(x)| \,dx\, dy  + o(1)\\
& \le \int_{-\eps}^\eps  f_k(y)  \int_{I}  |G_x(y)-G(x)| \,dx\, dy +  2  \sup_{z\in I} \|G_z\|_{L^1(I)} \|f_k\|_{L^\infty(I\setminus(-\eps,\eps))} + o(1)\\
& = \int_{-\eps}^\eps  f_k(y)  \int_{I}  |G_x(y)-G(x)| \,dx\, dy +o(1). 
\end{split}\end{equation}
Since $$\sup_{y\in (-\eps,\eps)} \sup_{x\in I} |S(x,y)-S(x,0)| = O(\eps),$$
we get 
$$
\int_{-\eps}^\eps f_k(y) \int_{I} |G_y(x)-G(x)| \, dx\, dy = \frac{1}{\pi}\int_{-\eps}^\eps f_k(y) \int_{I} |\log\frac{|x|}{|x-y|}| \,dx\,dy + O(\eps).
$$
Moreover, using the change of variables $x = y z$, we obtain
$$
\int_{I} \left|\log\frac{|x|}{|x-y|}\right| \,dx = |y| \int_{-\frac{1}{|y|}}^\frac{1}{|y|}\ \left| \log \frac{|z|}{|z-1|}\right| dz = O\left(|y|\log\frac{1}{|y|}\right).
$$
Then, we have 
\begin{equation}\label{Green5}
\int_{-\eps}^\eps f_k(y) \int_{I} |G_y(x)-G_0(x)| \, dx\, dy = \int_{-\eps}^\eps f_k(y)  O\left(|y|\log\frac{1}{|y|}\right)\,dy + O(\eps)  = O\left(\eps \log \frac{1}{\eps}\right). 
\end{equation}
Clearly \eqref{Green4} and \eqref{Green5} yield $\limsup_{k\to +\infty} \|v_k -G\|_{L^1(I)} = O(\eps \log\frac{1}{\eps})$. Since $\eps$ can be arbitrarily small we get the conclusion. 
\end{proof}

\begin{prop}\label{convgreen} We have $\mu_k \tilde u_k \rightarrow \tilde G$  in $C^0_{\loc}(\overline{\R^2_+}\setminus\{(0,0)\}) \cap C^1_{\loc}( \R^2_+)$, where $\tilde G$ is the Poisson extension of $G$.
\end{prop}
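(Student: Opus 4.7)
The plan is to observe that $w_k:=\mu_k\tilde u_k-\tilde G$ is itself the Poisson extension of $\mu_k u_k - G$, since the Poisson extension operator is linear and both $\mu_k\tilde u_k$ and $\tilde G$ are, by construction, given by the Poisson formula \eqref{Poisson2}. Thus
$$
w_k(x,y)=\frac{y}{\pi}\int_{I}\frac{\mu_k u_k(t)-G(t)}{(x-t)^2+y^2}\,dt,
$$
(both $u_k$ and $G$ vanishing outside $I$), so all convergence properties of $w_k$ can be read off from norms of its boundary data, for which Lemma \ref{leconvgreen} supplies $L^1(I)$ convergence together with $L^\infty_{\loc}(\bar I\setminus\{0\})$ convergence.

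For the interior statement $C^1_{\loc}(\R^2_+)$, I would fix a compact $K\subset\R^2_+$ and set $y_0:=\inf_{(x,y)\in K}y>0$. The Poisson kernel and all its $(x,y)$-derivatives are then uniformly bounded in $t\in\R$ and $(x,y)\in K$, so differentiating under the integral sign yields
$$
\|w_k\|_{C^\ell(K)}\le C(K,\ell)\,\|\mu_k u_k-G\|_{L^1(I)},
$$
which tends to zero. This in fact gives $C^\ell_{\loc}(\R^2_+)$ convergence for every $\ell$.

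For the boundary-inclusive statement, I would fix a compact $K\subset\overline{\R^2_+}\setminus\{(0,0)\}$, set $\sigma:=\tfrac12\,\mathrm{dist}(K,(0,0))>0$, and split the Poisson integral at $|t|=\sigma$. For $|t|<\sigma$ and $(x,y)\in K$ the triangle inequality gives $(x-t)^2+y^2\ge\sigma^2$, so this piece is bounded by $\tfrac{\mathrm{diam}(K)}{\pi\sigma^2}\|\mu_k u_k-G\|_{L^1(I)}$. For $|t|\ge\sigma$ (restricted to $I$) I would use that $\mu_k u_k\to G$ uniformly on $I\setminus(-\sigma,\sigma)$, together with the normalization $\tfrac{y}{\pi}\int_\R\frac{dt}{(x-t)^2+y^2}=1$, to dominate the second piece by $\|\mu_k u_k-G\|_{L^\infty(I\setminus(-\sigma,\sigma))}$. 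Both terms vanish as $k\to\infty$ by Lemma \ref{leconvgreen}.

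I do not expect any serious obstacle once Lemma \ref{leconvgreen} is available: the only slightly non-trivial step is the choice $\sigma=\tfrac12\,\mathrm{dist}(K,(0,0))$, which isolates the potentially singular part of $G$ (located at $0$) from $K$ and is exactly what is needed to convert the local $L^\infty$ convergence away from $0$ into genuine uniform control of $w_k$ on $K$.
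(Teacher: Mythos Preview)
Your proposal is correct and follows essentially the same route as the paper: split the Poisson integral of $\mu_k u_k - G$ into a piece near $0$ (controlled in $L^1$ by Lemma \ref{leconvgreen} together with the lower bound $(x-t)^2+y^2\ge\sigma^2$) and a piece away from $0$ (controlled by the $L^\infty$ convergence and the normalization of the Poisson kernel). The only cosmetic difference is that the paper obtains the $C^1_{\loc}(\R^2_+)$ convergence from harmonicity and the already established $C^0_{\loc}$ convergence, whereas you differentiate the Poisson formula directly; both are valid. (One tiny slip: $\mathrm{diam}(K)$ need not bound $y$ on $K$; replace it by $\sup_{(x,y)\in K}y$.)
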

\begin{proof}
As in the proof of Lemma \ref{leconvgreen}  we denote $v_k:= \mu_k u_k - G$. Let us consider the Poisson extension $\widetilde v_k= \mu_k \widetilde u_k - \widetilde G$. For any fixed $\eps>0$, we can split
$$
\wt v_k (x,y)= \frac{1}{\pi} \int_{-\eps}^\eps \frac{y v_k(\xi)}{(x-\xi)^2+y^2} d\xi + \frac{1}{\pi} \int_{I\setminus (-\eps,\eps)} \frac{yv_k(\xi)}{(x-\xi)^2+y^2} d\xi,
$$
By Lemma \ref{leconvgreen}, we have 
$$
 \left|\frac{1}{\pi}  \int_{I\setminus (-\eps,\eps)} \frac{y v_k(\xi)}{(x-\xi)^2+y^2} d\xi \right| \le \frac{1}{\pi}  \|v_k\|_{L^\infty(I\setminus(-\eps,\eps))} \int_{\R}  \frac{y v_k(\xi)}{(x-\xi)^2+y^2} d\xi
= \|v_k\|_{L^\infty(I\setminus(-\eps,\eps))}\to 0, $$
as $k\to \infty$. Moreover, assuming $(x,y)\in \R^{2}_+\setminus B_{2\eps}(0,0)$, we get
$$
\left|\frac{1}{\pi} \int_{-\eps}^\eps \frac{y v_k(\xi)}{(x-\xi)^2+y^2} d\xi \right|\le \frac{1}{\pi} \int_{-\eps}^\eps\frac{y |v_k(\xi)|}{|(x,y)-(\xi,0)|^2} d\xi \le \frac{y}{\pi \eps^2} \|v_k\|_{L^1(I)} \to 0.
$$
Hence $\wt v_k \to 0$ in  $C^0_{\loc}(\R^{2}_+\setminus B_{2\eps}(0,0))$. Finally, since can $\eps$ be arbitrarily small and $\tilde v_k $ is harmonic in $\R^2_+$, we get $\wt v_k \to 0$ in $C^0_{\loc}(\overline{\R^2_+}\setminus\{(0,0)\}) \cap C^1_{\loc}( \R^2_+)$. 
\end{proof}

%
%
%

\subsection{The two main estimates and completion of the proof}

We shall now conclude our contradiction argument by showing the incompatibility of \eqref{ukinfty} with \eqref{convext} and the the definition of $C_\pi$. In this final part of the proof, we will use the precise asymptotic of $\wt G$ near $(0,0)$. Since $\log |(x,y)|$ is the Poisson integral of $\log |x|$ (see Proposition \ref{Uniq}), and since $S\in C(\R)$, \eqref{defS} guarantees the existence of the limit 
$$S_0:=\lim_{(x,y)\to (0,0)} \tilde G(x,y)+\frac{1}{\pi}\log|(x,y)|=\lim_{x\to 0}G(x)+\frac{1}{\pi}\log|x|.$$
In fact, using \eqref{FormulaGI} 
we get $S_0=\frac{\log 2}{\pi}$. More precisely, noting that $S\in C^\infty(I)$, we can write 
\begin{equation}\label{Gprecise}
\tilde G(x,y)=\frac{1}{\pi}\log\frac{1}{|(x,y)|}+ S_0 +h(x,y),
\end{equation}
with $h\in C^\infty(\ov{\R^2_+} \cap B_1(0,0)) \cap C(\ov{\R^2_+})$ and $h(0,0)=0$.

\begin{prop}\label{Cg1} If \eqref{ukinfty} holds, then
$C_\pi \le 2\pi e^{\pi S_0}=4\pi.$
\end{prop}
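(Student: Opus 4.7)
The plan relies on Proposition \ref{stimaalto}, which rewrites the target $C_\pi \le 2\pi e^{\pi S_0}$ as the asymptotic inequality
\[
\pi \mu_k^2 \le \log(1/r_k) + 2\pi S_0 + o(1) \quad \text{as } k \to \infty,
\]
and I would deduce this from a Dirichlet energy estimate for $\tilde u_k$ in $\R^2_+$ that exploits the two-scale convergence of Propositions \ref{conveta} and \ref{convgreen} together with the sharp expansion \eqref{Gprecise} of $\tilde G$ at the origin, which is where $S_0$ enters.

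Fix large $R>0$ and small $\delta \in (0,1)$ with $R r_k < \delta$, and decompose $\R^2_+$ into the inner cap $B^+_{Rr_k}$, the half-annulus $A^+ := (B_\delta \setminus B_{Rr_k}) \cap \R^2_+$, and the exterior $\R^2_+ \setminus B^+_\delta$. By Propositions \ref{conveta} and \ref{convgreen} (and the $C^1_{\loc}$ convergence they supply), the Dirichlet energies in the inner and exterior pieces are, to leading order,
\[
\int_{B^+_{Rr_k}} |\nabla \tilde u_k|^2 = \frac{1}{4\pi^2\mu_k^2} \int_{B^+_R} |\nabla \tilde\eta_\infty|^2 + o(\mu_k^{-2}),
\]
\[
\int_{\R^2_+ \setminus B^+_\delta} |\nabla \tilde u_k|^2 = \frac{1}{\mu_k^2} \int_{\R^2_+ \setminus B^+_\delta} |\nabla \tilde G|^2 + o(\mu_k^{-2}),
\]
both of order $\mu_k^{-2}$. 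Since $\int_{\R^2_+}|\nabla \tilde u_k|^2 = 1$, essentially all of the total Dirichlet energy concentrates on $A^+$.

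On $A^+$, Cauchy--Schwarz applied to the radial average $a_0(r) := \pi^{-1} \int_0^\pi \tilde u_k(r\cos\theta, r\sin\theta)\, d\theta$ yields the capacity-type bound
\[
\pi \bigl( a_0(Rr_k) - a_0(\delta) \bigr)^2 \le \log\bigl(\delta/(Rr_k)\bigr) \int_{A^+} |\nabla \tilde u_k|^2.
\]
Propositions \ref{conveta}, \ref{convgreen} and \eqref{Gprecise} supply the boundary averages: $a_0(Rr_k) = \mu_k + \frac{I(R)}{2\pi\mu_k} + o(\mu_k^{-1})$ with $I(R) := \pi^{-1}\int_0^\pi \tilde\eta_\infty(R\cos\theta, R\sin\theta)\,d\theta = -2\log R + O(1/R)$ as $R \to \infty$, and $a_0(\delta) = \mu_k^{-1}\bigl[\pi^{-1}\log(1/\delta) + S_0 + o_\delta(1)\bigr] + o(\mu_k^{-1})$. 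Expanding the square, substituting into the capacity inequality, and passing to the iterated limit $k\to\infty$, $R\to\infty$, $\delta\to 0$, the $\log R$ contribution from $-I(R)$ cancels against the logarithmic growth $\int_{B^+_R} |\nabla \tilde \eta_\infty|^2 \sim 4\pi \log R$ of the inner-region energy, and the $\log(1/\delta)$ term from \eqref{Gprecise} cancels against the divergence $\int_{\R^2_+ \setminus B^+_\delta} |\nabla \tilde G|^2 \sim \pi^{-1}\log(1/\delta)$ of the exterior energy; the surviving additive constant is at most $2\pi S_0$, giving the required bound.

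The main obstacle is precisely the bookkeeping of these cancellations. Each $R$- or $\delta$-dependent energy correction is individually $O(\mu_k^{-2})$, but it multiplies the large factor $\log(\delta/(Rr_k)) \sim \pi \mu_k^2$ in the capacity inequality, producing an $O(1)$ contribution that must cancel exactly with the corresponding logarithmic term in the expansion of the boundary averages rather than be absorbed as $o(1)$. The required cancellation is dictated by matching the asymptotic of the bubble $\tilde\eta_\infty$ at infinity with the expansion \eqref{Gprecise} of the Green's extension at the origin, and carrying it out cleanly is the technical heart of the argument.
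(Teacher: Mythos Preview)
Your proposal is correct and follows essentially the same strategy as the paper: decompose $\R^2_+$ into an inner cap, an annulus, and an exterior region; control the inner and exterior Dirichlet energies via Propositions~\ref{conveta} and~\ref{convgreen}; extract a capacity-type lower bound on the annular energy; and let the logarithmic terms cancel, leaving the constant $2\pi S_0$ and hence $C_\pi \le 2\pi e^{\pi S_0}$ via Proposition~\ref{stimaalto}.

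Two minor remarks on the variations. First, where the paper truncates $\tilde u_k$ at the levels $a_k=\inf_{B_{Lr_k}^+}\tilde u_k$ and $b_k=\sup_{\partial B_\delta^+}\tilde u_k$ and compares to the explicit radial capacity minimizer $\tilde\Phi_k$, you instead run Cauchy--Schwarz on the angular average $a_0(r)$. Both routes yield the identical bound $\pi(\text{gap})^2 \le \log(\delta/(Rr_k))\int_{A^+}|\nabla\tilde u_k|^2$, and the boundary expansions agree to the precision needed (the difference between $\inf$ and angular average of $\tilde\eta_\infty$ on $\partial B_R^+$ is $O(1/R)$, and likewise $O(\delta)$ on the outer circle). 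Second, you write the exterior energy as an equality $\int_{\R^2_+\setminus B_\delta^+}|\nabla\tilde u_k|^2 = \mu_k^{-2}\int_{\R^2_+\setminus B_\delta^+}|\nabla\tilde G|^2 + o(\mu_k^{-2})$, but Proposition~\ref{convgreen} only gives $C^1_{\loc}(\R^2_+)$ convergence (not up to the boundary or at infinity), so only the $\liminf$ inequality is justified---which is exactly what the paper uses, and which suffices since you are bounding the annular energy from above.
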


\begin{proof} For a fixed large $L>0$ and a fixed and small $\delta>0$ set
$$a_k:=\inf_{B_{Lr_k}\cap \R^2_+} \tilde u_k,\quad  b_k:=\sup_{B_\delta\cap \R^2_+} \tilde u_k,\quad \tilde v_k:= (\tilde u_k \wedge a_k)\vee b_k.$$
Recalling that $\|\nabla \tilde u_k\|_{L^2}^2=1$, we have
\begin{equation}\label{stimavk}
\int_{(B_\delta \setminus B_{Lr_k})\cap \R^2_+}|\nabla \tilde v_k|^2dxdy\le 1-\int_{\R^2_+\setminus B_\delta} |\nabla \tilde u_k|^2 dxdy -\int_{\R^2_+\cap B_{Lr_k}} |\nabla \tilde u_k|^2 dxdy
\end{equation}
Clearly the left-hand side bounds
\[\begin{split}
\inf_{\substack{\tilde u|_{\R^2_+\cap \de B_{Lr_k}}=a_k\\ \tilde u|_{\R^2_+\cap \de B_\delta}=b_k}} \int_{(B_\delta \setminus B_{Lr_k})\cap \R^2_+}|\nabla \tilde u|^2dxdy&=\int_{(B_\delta \setminus B_{Lr_k})\cap \R^2_+}|\nabla \tilde \Phi_k|^2dxdy \\
&=\pi\frac{(a_k-b_k)^2}{\log \delta -\log (Lr_k)},
\end{split}\]
where the function $\tilde \Phi_k$ is the unique solution to 
\[
\left\{\begin{array}{ll}
\Delta \tilde \Phi_k = 0 & \text{in } \R^2_+\cap (B_\delta \setminus B_{Lr_k}),\\
\tilde \Phi_k=a_k& \text{on  }\R^2_+\cap \de B_{Lr_k},\\
\tilde \Phi_k=b_k& \text{on  }\R^2_+\cap \de B_{\delta},\\
\frac{\de \tilde \Phi_k}{\de y}=0 & \text{on } \de \R^2_+ \cap (B_{\delta} \setminus B_{Lr_k}),
\end{array}\right.
\]
given explicitly by
$$\tilde \Phi_k=\frac{b_k-a_k}{\log\delta -\log (Lr_k)}\log|(x,y)|+\frac{a_k\log\delta -b_k\log Lr_k}{\log \delta -\log(Lr_k)}.$$
Using Proposition \ref{conveta} we obtain
$$a_k=\mu_k+\frac{-\frac{1}{\pi}\log L+O(L^{-1})+o(1)}{\mu_k},$$
where for fixed $L>0$ we have $o(1)\to0$ as $k\to\infty$, and $|O(L^{-1})|\le \frac{C}{L}$ uniformly for $L$ and $k$ large. Moreover, using Proposition \ref{convgreen} and \eqref{Gprecise}, we obtain
$$b_k=\frac{-\frac{1}{\pi}\log \delta + S_0 +O(\delta)+o(1)}{\mu_k},$$
where for fixed $\delta>0$ we have $o(1)\to0$ as $k\to\infty$, and $|O(\delta)|\le C\delta$ uniformly for $\delta$ small and $k$ large. 

Still with Proposition \ref{conveta} we get
\[
\begin{split}
\lim_{k\to\infty}\mu_k^2\int_{\R^2_+\cap B_{Lr_k}} |\nabla \tilde u_k|^2 dxdy &=\frac{1}{4\pi^2}\int_{\R^2_+\cap B_L}|\nabla \tilde \eta_\infty|^2dxdy\\
&=\frac{1}{\pi}\log\frac{L}{2} +O\bra{\frac{\log L}{L}}.
\end{split}\]
Similarly with Proposition \ref{convgreen} we get
\[
\begin{split}
\liminf_{k\to\infty} \mu_k^2\int_{\R^2_+\setminus B_\delta}|\nabla \tilde u_k|^2dxdy
&\ge \int_{\R^2_+\setminus B_\delta}|\nabla \tilde G|^2dxdy\\
&=\int_{\R^2_+\cap \partial B_\delta} -\frac{\de \tilde G}{\de r} \tilde G d\sigma + \int_{(\R \times \{0\} )\setminus B_\delta} -\frac{\de \tilde G(x,0)}{\de y}  G(x)dx \\
&=\int_{\R^2_+\cap \partial B_\delta} \bra{\frac{1}{\pi \delta}+O(1)}\bra{-\frac{1}{\pi}\log\delta +S_0+O(\delta)} d\sigma \\
&= -\frac{1}{\pi}\log \delta +S_0+O(\delta \log\delta),
\end{split}
\]
where we used the expansion in \eqref{Gprecise} and the boundary conditions
$$
\begin{cases}
\wt G(x,0)= G(x)=0, \quad & \text{ for } x\in \R\setminus I, \\
-\frac{\de \tilde G(x,0)}{\de y}=\hl G(x)=0, \quad  & \text{for }x\in I\setminus \{0\}.
\end{cases}
$$
We then get
\[\begin{split}
\frac{\pi(a_k-b_k)^2}{\log\delta -\log(Lr_k)}&\le 1-\frac{-\frac{1}{\pi}\log \delta +S_0+O(\delta\log \delta)+\frac{1}{\pi}\log\frac{L}{2}+O\bra{\frac{\log L}{L}}}{\mu_k^2},
\end{split}\]
or
\[\begin{split}
\pi(a_k-b_k)^2&= \pi \mu_k^2 -2\log L +O(L^{-1})+2\log \delta -2\pi S_0+O(\delta)+o(1)+\frac{O(\log^2 L +\log^2 \delta)}{\mu_k^2}\\
&\le (\log \delta -\log L +\log (\lambda_k \mu_k^2)+\alpha_k\mu_k^2+\log\alpha_k ) \\
&\quad \times\bra{1-\frac{-\frac{1}{\pi}\log \delta +S_0+O(\delta\log \delta)+\frac{1}{\pi}\log\frac{L}{2}+O\bra{\frac{\log L}{L}}}{\mu_k^2}}\\
&= \log \delta -\log L +\log (\lambda_k \mu_k^2)+\alpha_k\mu_k^2+\log\alpha_k  +\alpha_k\bra{\frac{1}{\pi}\log\delta -S_0- \frac{1}{\pi} \log\frac{L}{2}}\\
&\quad +O(\delta \log\delta)+O\bra{\frac{\log L}{L}}+\frac{O(\log^2\delta)+O(\log^2L)+O(1)}{\mu_k^2}.
\end{split}\]
Rearranging gives
\[\begin{split}
\log\frac{1}{\lambda_k \mu_k^2}&\le  \bra{1-\frac{\alpha_k}{\pi}}(\log L-\log\delta)+(\alpha_k-\pi)\mu_k^2 +(2\pi-\alpha_k)S_0
+\frac{\alpha_k}{\pi}\log 2 +\log\alpha_k\\
&\quad +O(\delta \log \delta )+ O\bra{\frac{\log L}{L}}+o(1),
\end{split}\]
with $o(1)\to 0$ as $k\to \infty$.
Then, recalling that $\alpha_k\uparrow \pi$, letting $k\to \infty$ first and then $L\to \infty$, $\delta\to 0$, we obtain
$$\limsup_{k\to\infty}\log  \frac{1}{\lambda_k \mu_k^2} \le \pi S_0+ \log(2\pi)=\log(4\pi),$$
and using Proposition \ref{stimaalto} we conclude.
\end{proof}

\begin{prop}\label{Cg2}
There exists a function $u\in \tilde H^{\frac{1}{2},2}(I)$ with $\|\ql u\|_{L^2(\R)}\le 1$ such that
$$\int_I \bra{e^{\pi u^2}-1}dx> 2\pi e^{\pi S_0}=4\pi.$$
\end{prop}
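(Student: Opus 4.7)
\textbf{Plan for Proposition \ref{Cg2}.} The plan is a Moser-type test-function construction, modeled on the blow-up profile from Propositions \ref{conveta}--\ref{convgreen}. For small $\epsilon>0$ and an auxiliary radius $r=r(\epsilon)\to 0$ with $r/\epsilon\to\infty$ (e.g.\ $r=\sqrt{\epsilon}$), I would define an $H^1(\R^2_+)$-extension piecewise:
$$\tilde\phi_\epsilon(x,y):= \frac{1}{c_\epsilon}\begin{cases} c_\epsilon^2 - \frac{1}{2\pi}\log\frac{(\epsilon+y)^2+x^2}{\epsilon^2} + S_0, & |(x,y)|\le r,\\ \tilde G(x,y), & |(x,y)|\ge r,\end{cases}$$
joined by a smooth radial interpolation across $\partial B_r^+$ to guarantee $\tilde\phi_\epsilon\in H^1(\R^2_+)$. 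The trace $\phi_\epsilon:=\tilde\phi_\epsilon|_{y=0}$ belongs to $\tilde H^{\frac{1}{2},2}(I)$ since $\tilde G$ vanishes on $(\R\setminus I)\times\{0\}$.

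Computing the Dirichlet energy, I split into the inner ball $B_r^+$, the outer region, and a negligible transition. The inner contribution, after the scaling $(x,y)=\epsilon(\xi,\eta)$, equals $\frac{1}{\pi c_\epsilon^2}\log(r/(2\epsilon))+o(c_\epsilon^{-2})$ via the identity $\int_{B_R^+}|\nabla\tilde\eta_\infty|^2 = 4\pi\log(R/2)+O(\log R/R)$ used in Proposition \ref{Cg1}. The outer contribution is $\frac{1}{c_\epsilon^2}\bigl(-\frac{1}{\pi}\log r+S_0+o(1)\bigr)$ by integration by parts, using the boundary conditions $\tilde G=0$ on $(\R\setminus I)\times\{0\}$ and $\partial_y\tilde G=0$ on $(I\setminus\{0\})\times\{0\}$ (as in Proposition \ref{Cg1}). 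Summing and using $S_0=\tfrac{\log 2}{\pi}$, the total energy is $\frac{1}{c_\epsilon^2}\bigl(-\frac{1}{\pi}\log\epsilon+o(1)\bigr)$, so I take $c_\epsilon^2=-\tfrac{1}{\pi}\log\epsilon+o(1)$ in order to enforce $\|\ql\phi_\epsilon\|_{L^2(\R)}\le 1$.

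Computing the exponential integral, the contribution from $|x|>r$ tends to $|I|=2$ since $G/c_\epsilon\to 0$ uniformly on compact subsets of $\R\setminus\{0\}$. For the bubble part, the change $x=\epsilon\xi$ and the pointwise expansion
$$\pi\phi_\epsilon(\epsilon\xi)^2 = \pi c_\epsilon^2 + 2\pi S_0 - \log(1+\xi^2)+o(1)\qquad (|\xi|\le o(c_\epsilon))$$
give $\int_{|x|\le r}e^{\pi\phi_\epsilon^2}dx = \pi e^{2\pi S_0}\cdot \epsilon e^{\pi c_\epsilon^2}\cdot(1+o(1))$; since $\epsilon e^{\pi c_\epsilon^2}\to 1$ and $e^{2\pi S_0}=4$, the leading bubble contribution is $4\pi$.

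The main obstacle -- upgrading the asymptotic equality $\int_I(e^{\pi\phi_\epsilon^2}-1)\,dx\to 4\pi$ into a \emph{strict} inequality $>4\pi$ -- is where the proof genuinely bites. The key gain is that the piecewise-harmonic gluing $\tilde\phi_\epsilon$ is not the true harmonic extension of its trace: the kink across $\partial B_r^+$ implies $\|\ql\phi_\epsilon\|_{L^2}^2\le 1-\delta_\epsilon$ with $\delta_\epsilon>0$ controlled from below by the $H^{\frac12}$-norm of the normal-derivative jump on $\partial B_r^+$ (computable from the sub-leading terms in the matching of inner and outer profiles and from the expansion \eqref{Gprecise} of $\tilde G$). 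Rescaling $\phi_\epsilon\mapsto \phi_\epsilon/\sqrt{1-\delta_\epsilon}$ multiplies $e^{\pi c_\epsilon^2}$ by $e^{\pi c_\epsilon^2\delta_\epsilon/(1-\delta_\epsilon)}=\epsilon^{-\delta_\epsilon(1+o(1))}$, pushing the bubble integral strictly above $4\pi+|I|=4\pi+2$, hence $\int_I(e^{\pi\phi_\epsilon^2}-1)\,dx>4\pi$ for $\epsilon$ sufficiently small. Making the lower bound on $\delta_\epsilon$ quantitatively explicit, so that the rescaling gain in fact overcomes the $+|I|$ term, is the delicate point.
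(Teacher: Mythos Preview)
Your piecewise bubble/Green's-function construction and the leading-order computations (inner Dirichlet energy $\frac{1}{\pi c_\varepsilon^2}\log\frac{r}{2\varepsilon}$, outer energy $\frac{1}{c_\varepsilon^2}(-\frac1\pi\log r+S_0)$, bubble exponential integral $\to 4\pi$) are essentially those of the paper. The genuine gap is your mechanism for the \emph{strict} inequality.

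You propose to extract strictness from the harmonic-extension defect $\delta_\varepsilon=\|\nabla\tilde\phi_\varepsilon\|_{L^2}^2-\|\ql\phi_\varepsilon\|_{L^2}^2>0$ and then rescale. This is the wrong lever, and you correctly flag that you cannot close it: the normal-derivative jump across $\partial B_r^+$ between $\frac1{c_\varepsilon}\tilde\eta_\infty(\cdot/\varepsilon)$ and $\frac1{c_\varepsilon}\tilde G$ is only $O(c_\varepsilon^{-1})$ pointwise (both profiles behave like $-\frac{1}{\pi c_\varepsilon}\log|(x,y)|$ near $\partial B_r^+$), so $\delta_\varepsilon$ is small of uncontrolled order relative to the $o(1)$ errors already present in your energy balance. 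After rescaling, the gain $e^{\pi c_\varepsilon^2\delta_\varepsilon/(1-\delta_\varepsilon)}$ need not beat those errors.

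The paper does \emph{not} quantify this defect at all. It normalises $\|\nabla U_\varepsilon\|_{L^2(\R^2_+)}=1$ exactly (this fixes $c$), uses only the soft inequality $\|\ql u_\varepsilon\|_{L^2}\le\|\nabla U_\varepsilon\|_{L^2}=1$, and obtains strictness from a term you discarded: on the outer interval $I^2_{L\varepsilon}$ (where $u_\varepsilon=G/c$) one keeps the next order
\[
\int_{I^2_{L\varepsilon}}\bigl(e^{\pi u_\varepsilon^2}-1\bigr)\,dx\;\ge\;\int_{I^2_{L\varepsilon}}\pi u_\varepsilon^2\,dx
=\frac{1}{c^2}\int_{I^2_{L\varepsilon}}\pi G^2\,dx\;\ge\;\frac{\nu_{1/2}}{c^2}>0.
\]
Your sentence ``the contribution from $|x|>r$ tends to $|I|=2$'' throws exactly this away. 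The remaining task is to make the error terms $o(1/c^2)$, which the paper achieves by choosing the intermediate scale $L=\log^2\varepsilon$ (your $r=\sqrt\varepsilon$ gives $L=r/\varepsilon=\varepsilon^{-1/2}$, which also works). With this bookkeeping one gets
\[
\int_I\bigl(e^{\pi u_\varepsilon^2}-1\bigr)\,dx\;\ge\;2\pi e^{\pi S_0}+\frac{\nu_{1/2}}{c^2}+o\!\left(\frac1{c^2}\right)\;>\;2\pi e^{\pi S_0}
\]
for $\varepsilon$ small. So: keep your test function, drop the $\delta_\varepsilon$ argument, and instead track the $1/c^2$ contribution of $\pi G^2$ in the outer region.
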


\begin{proof}  For $\ve>0$ choose $L=L(\ve)>0$ such that as $\ve\to 0$ we have $L\to\infty$ and $L\ve\to 0$. Fix
$$\Gamma_{L\ve}:=\left\{(x,y)\in \R^2_+: \tilde G(x,y)=\gamma_{L\ve}:=\min_{\R^2_+\cap\de B_{L\ve}} \tilde G\right\}, $$
and
$$\Omega_{L\ve}:=\left\{ (x,y)\in \R^2_+: \tilde G(x,y)>\gamma_{L\ve} \right\}.$$
By the maximum principle we have $\R^2_+\cap B_{L\ve}\subset \Omega_{L\ve}$. Indeed, $\tilde G$ is harmonic in $\R^2_+$, $\tilde G\ge \gamma_{L\eps}$ on $\partial (\R^2_+\cap B_{L\eps}) \setminus \{(0,0)\}$,  and $\tilde G\to +\infty $ as $(x,y)\to (0,0)$.
Notice also that \eqref{Gprecise} gives
\begin{equation}\label{ExpGamma}
\gamma_{L\ve} =-\frac{1}{\pi}\log(L\ve)+S_0+O(L\ve).
\end{equation}

For some constants $B$ and $c$ to be fixed we set
\[
U_\ve(x,y):=\left\{
\begin{array}{ll}
\displaystyle c- {\frac{\log\bra{\frac{x^2}{\eps^2}+(1+\frac{y}{\eps})^2} +2B }{2\pi c}}& \text{for } (x,y)\in \R^2_+\cap B_{L\ve}(0,-\eps) \\
\displaystyle\rule{0cm}{0.5cm} \frac{\gamma_{L\ve}}{c} &\text{for }(x,y)\in \Omega_{L\ve}\setminus  B_{L\ve}(0,-\eps)\\
\displaystyle\rule{0cm}{0.8cm}  \frac{\tilde G(x,y)}{c} &\text{for }(x,y)\in  \R^2_+ \setminus \Omega_{L\ve}.
\end{array}
\right.
\]
Observe that $\R^2_+\cap B_{L\eps}(0,-\eps) \subseteq \R^2_+\cap B_{L\eps} \subseteq \Omega_{L\eps}$.
To have continuity on $\R^2_+\cap \de B_{L\ve}(0,-\ve)$ we impose
$$\frac{-\log L^2 -2B }{2\pi c}+ c=\frac{\gamma_{L\ve}}{c} 
$$
which, together with \eqref{ExpGamma}, gives the relation
\begin{equation}\label{contoB}
B=\pi c^2+\log\ve-\pi S_0+O(L\ve).
\end{equation}
Moreover
\[\begin{split}
\int_{ \R^2_+ \cap B_{L\ve}(0,-\ve)} |\nabla U_\ve|^2dxdy&=\frac{1}{4\pi^2 c^2}\int_{\R^2_+\cap B_L(0,-1)}|\nabla \log(x^2+(1+y)^2)|^2dxdy\\
&=\frac{\frac{1}{\pi}\log\bra{\frac{L}{2}}+O\bra{\frac{\log L}{L}}}{c^2},
\end{split}\]
and
\[\begin{split}
\int_{\R^2_+\setminus\Omega_{L\ve}}|\nabla U_\ve|^2 dxdy&=\frac{1}{c^2}\int_{\R^2_+\setminus\Omega_{L\ve}}|\nabla\tilde G|^2 dxdy\\
&=\frac{1}{c^2}\int_{\R^2_+\cap \de \Omega_{L\ve}}\frac{\de \tilde G}{\de \nu} \tilde G d\sigma-\frac{1}{c^2}\underbrace{\int_{(\R\times\{0\})\setminus \bar \Omega_{L\ve}} \frac{\de \tilde G}{\de y} \tilde G dx}_{=0}\\
&=\frac{\frac{1}{\pi}\log\bra{\frac{1}{L\ve}} +S_0+O(L\ve \log(L\ve))}{c^2},
\end{split}\]
where the last equality follows from \eqref{Gprecise}. We now impose $\|\nabla U_\ve\|_{L^2(\R^2_+)}=1$, obtaining
\begin{equation}\label{contoc}
-\log \ve -\log 2+\pi S_0+O(L\ve \log(L\ve))+ O\bra{\frac{\log L}{L}}=\pi c^2,
\end{equation}
which, together with \eqref{contoB}, implies
\begin{equation}\label{contoB2}B=-\log 2+ O(L\ve \log(L\ve))+ O\bra{\frac{\log L}{L}}.
\end{equation}
Let now $I_{L,\eps}^1=(-\eps \sqrt{L^2-1},\eps \sqrt{L^2-1})$ and $I_{L\eps}^2$  be the disjoint sub-intervals of $I$ obtained by intersecting $I\times \{0\}$ respectively with $B_{L\eps}(0,-\eps)$ and $\ov{\R^2_+ \setminus \Omega_{L\eps}}$. Then, for $u_\ve(x):=U_\ve(x,0)$,   using a change of variables and \eqref{contoc}-\eqref{contoB2} we get 
\[\begin{split}
\int_{I_{L,\eps}^1} e^{\pi u_\ve^2}dx
&=\ve  \int_{-\sqrt{L^2-1}}^{\sqrt{L^2-1}}\exp\bra{\pi \bra{c-\frac{\log(1+x^2)+2B}{2\pi c}}^2}dx\\
&>\ve e^{\pi c^2-2B}\int_{-\sqrt{L^2-1}}^{\sqrt{L^2-1}}\frac{1}{1+x^2}dx\\
&=2e^{\pi S_0+ O(L\ve \log(L\ve))+ O\bra{\frac{\log L}{L}}} \pi\bra{1+O\bra{\frac 1L}}\\
&=2\pi e^{\pi S_0}+O(L\ve \log(L\ve))+ O\bra{\frac{\log L}{L}}.
\end{split}\]
Moreover
\[
\int_{I_{L\eps}^2} \bra{e^{\pi u_\ve^2}-1}dx  \ge \int_{I_{L\eps}^2} \pi u_\ve^2dx
=\frac{1}{c^2} \int_{I_{L\eps}^2} \pi G^2dx=:\frac{\nu_{L\ve}}{c^2},
\]
with
$$\nu_{L\ve}>\nu_{\frac{1}{2}}>0,\quad \text{for } L\ve<\frac{1}{2}.$$
Now observe  that $c^2 = -\frac{\log \eps}{\pi} +O(1)$ by \eqref{contoc}, and choose $L=\log^2\ve$  to obtain
$$O(L\ve \log(L\ve))+ O\bra{\frac{\log L}{L}}= O\bra{\frac{\log\log\ve}{\log^2\ve}}= o\bra{\frac{1}{c^2}},$$
so that
\[\begin{split}
\int_{I} \bra{e^{\pi u_\ve^2}-1}dx &\ge 2\pi e^{\pi S_0} +\frac{\nu_{\frac12}}{c^2}+o\bra{\frac{1}{c^2}}>2\pi e^{\pi S_0}
\end{split}\]
for $\ve$ small enough.

Finally notice that
\[
\|\ql u_\ve \|_{L^2(\R)}^2=\int_{\R^2_+}|\nabla \tilde u_\ve|^2dxdy\le \int_{\R^2_+}|\nabla U_\ve|^2dxdy \le 1,
\]
since the Poisson  extension $\tilde u_\ve$ minimizes the Dirichlet energy among extensions with finite energy.
\end{proof}

\section{Proof of Theorem \ref{compact}}
Let $u_k\in H\cap C^\infty(\R)$ be a sequence of positive {even and decreasing} solutions to \eqref{eqR} {satisfying} the energy bound \eqref{energybound} {and with} $\lambda_k\to \lambda_\infty \ge 0$ as $k\to \infty$.

First we show that case (i) holds when $\mu_k\le C$.

\begin{lemma}\label{lemma3.1}
If $\mu_k\le C$ then (i) holds. 
\end{lemma}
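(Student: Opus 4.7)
The plan is to upgrade weak boundedness to strong $C^\ell_{\loc}$ convergence by a standard bootstrap argument, adapted to the nonlocal setting via the reference result Lemma \ref{LemmaLs}.

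First I would exploit the two available global bounds: $\sup_k \|u_k\|_{L^\infty(\R)} \le C$ (the hypothesis of case (i)) and $\sup_k \|u_k\|_H < \infty$ (the energy bound \eqref{energybound}), which in particular gives $\sup_k \|u_k\|_{L^2(\R)} < \infty$. Interpolation then yields uniform boundedness of $u_k$ in every $L^p(\R)$ with $p \in [2,\infty]$. Since $\alpha_k$ is bounded, the same holds for the nonlinear right-hand side $f_k := \lambda_k u_k e^{\alpha_k u_k^2}$, so that the inhomogeneous term $g_k := f_k - u_k$ in $\hl u_k = g_k$ is uniformly bounded in $L^p(\R)$ for every such $p$, and also in the scale $L_s(\R)$ (cfr. \eqref{L12}) needed by Lemma \ref{LemmaLs}.

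Next I would feed these global bounds into Lemma \ref{LemmaLs} (the nonlocal analogue of Schauder estimates that requires global $L_s$ control) to deduce uniform local estimates: $u_k$ is bounded in $C^{0,\gamma}_{\loc}(\R)$ for some $\gamma \in (0,1)$. At this point a standard bootstrap becomes available: once $u_k \in C^{0,\gamma}_{\loc}$ with a uniform bound, the smoothness of $t \mapsto t e^{\alpha_k t^2}$ on the bounded range of $u_k$ gives $f_k$ uniformly bounded in $C^{0,\gamma}_{\loc}$, hence by Lemma \ref{LemmaLs} again $u_k$ is bounded in $C^{1,\gamma}_{\loc}$, and iterating one obtains uniform bounds in $C^{\ell}_{\loc}(\R)$ for every $\ell \ge 0$. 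Arzelà--Ascoli and a diagonal extraction then produce a subsequence $u_k \to u_\infty$ in $C^{\ell}_{\loc}(\R)$ for every $\ell$.

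Finally, the weak-star compactness in $H$ coming from \eqref{energybound} gives $u_k \rightharpoonup u_\infty$ in $H$ (along a further subsequence), and weak lower semicontinuity yields $u_\infty \in H$ with $\|u_\infty\|_H^2 \le \Lambda$; together with $u_k \to u_\infty$ in $C^\ell_{\loc}$ and $\lambda_k \to \lambda_\infty$, one passes to the limit pointwise in equation \eqref{eqR} to conclude that $u_\infty$ solves \eqref{eqinftyR}. The only genuinely nonlocal difficulty is that \eqref{eqR} is not local and hence local regularity for $u_k$ depends on the behavior of $u_k$ on all of $\R$, which is precisely the reason the global $L^\infty \cap H$ bounds must be converted into the $L_s$ hypothesis of Lemma \ref{LemmaLs}; this is the step where care is needed, but it is precisely what that lemma is designed to handle.
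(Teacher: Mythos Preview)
Your overall strategy (global $L^\infty$ bound $\Rightarrow$ elliptic regularity $\Rightarrow$ bootstrap $\Rightarrow$ Arzel\`a--Ascoli $\Rightarrow$ pass to the limit) is the same as the paper's, but your central citation is wrong. In this paper, Lemma~\ref{LemmaLs} is \emph{not} a Schauder-type regularity statement: it asserts that the rescaled blow-up functions $\eta_k$ are bounded in $L_s(\R)$, and it belongs to the analysis of case~(ii), which is disjoint from case~(i) and logically posterior to Lemma~\ref{lemma3.1}. Invoking it here to obtain $C^{0,\gamma}_{\loc}$ bounds on $u_k$ is a misidentification; nothing in that lemma produces regularity from $L_s$ control of the right-hand side.

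What the paper actually uses is simpler and does not require any of the machinery of Section~3. Since $\mu_k\le C$, both $u_k$ and $(-\Delta)^{1/2}u_k=\lambda_k u_k e^{\alpha_k u_k^2}-u_k$ are uniformly bounded in $L^\infty(\R)$; standard fractional elliptic regularity (e.g., $u\in L^\infty(\R)$ and $(-\Delta)^{1/2}u\in L^\infty(\R)$ imply $u\in C^{0,\alpha}_{\loc}$) then launches the bootstrap directly. For the passage to the limit in \eqref{eqR}, the paper does not argue pointwise: it tests with $\varphi\in\mathcal{S}(\R)$ and splits $\R=(-R,R)\cup(-R,R)^c$, using $C^\ell_{\loc}$ convergence on the compact part and the uniform $L^\infty$ bound together with the quadratic decay of $(-\Delta)^{1/2}\varphi$ on the tail. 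Your ``pointwise'' limit can also be justified thanks to the global $L^\infty$ bound (dominated convergence in the integral formula for $(-\Delta)^{1/2}$), but you should say this explicitly, since $C^\ell_{\loc}$ convergence alone does not suffice for a nonlocal operator. If you replace the appeal to Lemma~\ref{LemmaLs} by the standard $L^\infty$-based fractional regularity and make the limit argument precise, your proof coincides with the paper's.
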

\begin{proof}
By assumption we know that  $u_k$ and $f_k:=(-\Delta)^\frac{1}{2} u_k=\lambda_k u_k e^{\alpha_k u_k^2}-u_k$ are uniformly bounded in $L^\infty(\R)$. Then, by elliptic estimates and a bootstrap argument, we can find $u_\infty\in C^{\infty}(\R)$ such that up to a subsequence $u_k\to u_\infty$  in $C^\ell_{\loc}(\R)$ for every $\ell\ge 0$. To prove that $u_{\infty}$ satisfies \eqref{eqinftyR}, note that $f_k\to f_\infty:= \lambda_\infty u_\infty e^{\pi u_\infty^2}-u_{\infty}$ locally uniformly on $\R$ and set $M= \sup_{k} (\|f_k\|_{L^\infty(\R)}+ \mu_k)$. For any $\varphi \in \mathcal{S}(\R)$ (the Schwarz space of rapidly decreasing functions) and any $R>0$, we have that  
\[
\begin{split}
\int_{\R} |f_k - f_\infty||\varphi| dx  & \le  \|f_k-f_\infty\|_{L^\infty((-R,R))}\int_{-R}^R |\varphi| + 2M \|\varphi\|_{L^1{((-R,R)^c)}}\\
& \stackrel{k\to +\infty}{\rightarrow} M  \|\varphi\|_{L^1((-R,R)^c)} \\ & \stackrel{R\to +\infty}{\rightarrow} 0.
\end{split}
\]
Similarly, recalling that $(-\Delta)^\frac{1}{2} \ph$ has quadratic decay at infinity {(see e.g. \cite[Prop. 2.1]{Hyd})}, we get
\[\begin{split}
\int_{\R} |u_k- u_\infty| |(-\Delta)^\frac{1}{2} \varphi| dx  & \le \|(-\Delta)^\frac{1}{2} \varphi \|_{L^\infty((-R,R))} \|u_k-u_\infty\|_{L^1((-R,R))} \\
&\quad + {C} \int_{(-R,R)^c}  \frac{|u_k(x)-u_\infty(x)|}{|x|^2} dx  \\
&\le  \|(-\Delta)^\frac{1}{2} \varphi \|_{L^\infty((-R,R))} \|u_k-u_\infty\|_{L^1((-R,R))} + 2{C}M \int_{(-R,R)^c} \frac{dx}{x^2}\\
& \stackrel{k,R\to +\infty}{\rightarrow} 0.
\end{split}
\]
Hence $u$ is a weak solution of \eqref{eqinftyR}.  
\end{proof}


From now on we will assume that $\mu_k\to +\infty$ and prove that (ii) of Theorem \ref{compact} holds.

\begin{lemma}\label{localconv}
Let $\eta_k$ be defined as in Theorem \ref{compact}. Then $\eta_k$ is bounded in $C^{0,\alpha}_{\loc}(\R)$ for  $\alpha\in (0,1)$. 
\end{lemma}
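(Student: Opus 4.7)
The strategy is to derive a rescaled equation for $\eta_k$, establish a uniform $L^\infty(\R)$ bound on its right-hand side, and then apply the nonlocal interior regularity of Lemma \ref{LemmaLs} to upgrade this to the desired Hölder estimate.

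First, using the scaling identity $(-\Delta)^{1/2}[u_k(r_k\cdot)](x)=r_k\,[(-\Delta)^{1/2}u_k](r_kx)$ in \eqref{eqR}, together with the substitution $u_k(r_kx)=\mu_k+\eta_k(x)/(2\alpha_k\mu_k)$ and the defining relation \eqref{defmukrk}, a direct computation yields
\[
(-\Delta)^{\frac 12}\eta_k \;=\; 2\Bigl(1+\tfrac{\eta_k}{2\alpha_k\mu_k^2}\Bigr)\exp\!\Bigl(\eta_k+\tfrac{\eta_k^{2}}{4\alpha_k\mu_k^2}\Bigr) \;-\; \tfrac{2}{\lambda_k e^{\alpha_k\mu_k^2}} \;-\; r_k\eta_k.
\]

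Next I would establish the a priori $L^\infty$ bound on this right-hand side. Since $0\le u_k\le\mu_k$, one has $-2\alpha_k\mu_k^2\le\eta_k\le 0$; this forces the prefactor $1+\eta_k/(2\alpha_k\mu_k^2)$ to lie in $[0,1]$, and the exponent $\eta_k+\eta_k^{2}/(4\alpha_k\mu_k^2)\le \eta_k/2$ to be nonpositive, so the exponential factor is bounded by $1$ and the first term by $2$. The two corrections are handled by combining \eqref{defmukrk} (which links $r_k|\eta_k|$ directly to $\lambda_k^{-1}e^{-\alpha_k\mu_k^2}$) with the energy identity obtained by testing \eqref{eqR} against $u_k$ and using \eqref{energybound}. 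Altogether $\|(-\Delta)^{1/2}\eta_k\|_{L^\infty(\R)}\le C$ uniformly in $k$.

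The final step is to invoke Lemma \ref{LemmaLs}, whose proof rests on the commutator estimate of Lemma \ref{Strange Lemma}: this lemma converts a global $L^\infty$ bound on $(-\Delta)^{1/2}\eta_k$, plus an appropriate $L_s$-type control on $\eta_k$ itself, into a local $C^{0,\alpha}$ bound. The nonlocal control on $\eta_k$ is obtained from the sign condition $\eta_k\le 0$, the normalization $\eta_k(0)=0$, the evenness and monotonicity of $\eta_k$ inherited from $u_k$, and the energy bound \eqref{energybound} translated to the rescaled variable.

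The main obstacle is precisely the nonlocality of $(-\Delta)^{1/2}$: ordinary Schauder-type arguments cannot be run locally and must carry along global information on $\eta_k$, which is delicate because $\eta_k$ can reach values as large in absolute value as $2\alpha_k\mu_k^2\to\infty$ far from the origin. Handling this tail uniformly in $k$ is the technical heart of the argument and is exactly what Lemma \ref{LemmaLs} (built on the commutator machinery of Lemma \ref{Strange Lemma}) is designed for.
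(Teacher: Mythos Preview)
Your proposal has a genuine logical gap: it is circular. Lemma \ref{LemmaLs} is proved \emph{after} Lemma \ref{localconv} in the paper, and its proof explicitly invokes Lemma \ref{localconv} (the estimate of the term $I_1$ there uses that $\eta_k$ is bounded in $C^{0,\alpha}_{\loc}$). So you cannot appeal to Lemma \ref{LemmaLs} here. You have also misread its content: Lemma \ref{LemmaLs} asserts that $\eta_k$ is bounded in the weighted space $L_s(\R)$; it does not produce any H\"older estimate. The commutator machinery of Lemma \ref{Strange Lemma} is needed later, to obtain $L_s$ control for \emph{all} $s>0$ (in particular $s<\tfrac12$, which is what gives uniform decay of the nonlocal tails when passing to the limit in the equation); it plays no role in the present lemma.

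The paper's argument is both simpler and logically prior. Your first two steps are essentially correct: one shows $r_k\mu_k^2\to 0$ (via the energy bound and the Moser--Trudinger inequality), and hence that $(-\Delta)^{1/2}\eta_k$ is uniformly bounded in $L^\infty(\R)$. The point you are missing is that the sign condition $\eta_k\le 0$ together with the normalization $\eta_k(0)=0$ already delivers the required global tail control directly: evaluating the equation at $x=0$ gives
\[
C \;\ge\; (-\Delta)^{\frac12}\eta_k(0)\;=\;\frac{1}{\pi}\int_{\R}\frac{-\eta_k(y)}{y^{2}}\,dy\;=\;\frac{1}{\pi}\int_{\R}\frac{|\eta_k(y)|}{y^{2}}\,dy,
\]
which (using also that $\eta_k$ is even and nonincreasing in $|x|$) yields a uniform $L^\infty_{\loc}$ bound and, simultaneously, a uniform $L_{1/2}$ tail bound. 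With these two ingredients in hand, standard interior regularity for $(-\Delta)^{1/2}$ with bounded right-hand side gives the $C^{0,\alpha}_{\loc}$ estimate for every $\alpha\in(0,1)$. No commutator estimates are needed at this stage.
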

\begin{proof}
Note that 
\[\begin{split}
r_k  \mu_k^2  = \frac{1}{\alpha_k \lambda_k e^{\alpha_k \mu_k^2}}   & =  \frac{1}{\alpha_k \|u_k\|_{H}^2 e^{\alpha_k\mu_k^2}} \int_{\R} u_k^2 e^{\alpha_ku_k^2}dx \\
& \le C \frac{1}{\alpha_k \|u_k\|_{H}^2 e^{\frac{\alpha_k}{2} \mu_k^2}} \int_{\R} u_k^2 e^{\frac{\alpha_k}{2}u_k^2}dx \\ & \le C \frac{\|u_k\|_{L^4}^2 \sqrt{D_{\alpha_k}}}{\alpha_k \|u_k\|_{H}^2 e^{\frac{\alpha_k}{2} \mu_k^2} } \\ &\le C  \frac{\sqrt{D_\pi}}{\alpha_k e^{\frac{\alpha_k}{2} \mu_k^2}}\to 0.  
\end{split}
\]
Moreover we have that
$$
(-\Delta)^\frac{1}{2}\eta_k = 2\frac{u_k(r_k\cdot)}{\mu_k}e^{\alpha_k u_k^2(r_k\cdot)-\alpha_k \mu_k^2} - 2\alpha_k r_k \mu_k^2  \frac{u_k(r_k \cdot)}{\mu_k}
$$
is bounded in $L^\infty$. Since $\eta_k \le 0$, and $\eta_k(0)=0$ this implies that $\eta_k$ is bounded in $L^\infty_{\loc}(\R)$ and then in $C^\alpha_{\loc}(\R)$ for any $\alpha\in (0,1)$. 
\end{proof}

The bound of Lemma \ref{localconv} implies that, up to a subsequence $\eta_k\to \eta_\infty$ in $C^{0,\alpha}_{\loc}(\R)$ for some function $\eta_\infty$. However, it does not provide a limit equation for $\eta_\infty$. In order to prove that $\eta_\infty$ solves 
$$
(-\Delta)^\frac{1}{2}\eta_\infty = 2 e^{\eta_\infty}
$$
we will prove that that $\eta_k$ is bounded in $L_s(\R)$ for any $s>0$. This bound can be obtained thanks to the commutator estimates proved in \cite{MMS}. Part of the argument must be modified since the ${u_k}'s$ are not compactly supported. We start by recalling the following technical lemma, which is a consequence of the estimates in \cite{MMS}.

\begin{lemma}\label{Strange Lemma} 
For any $s\in (0,1)$, there exists a constant $C=C(s)$ such that, for any  $\ph,\psi \in C^\infty_c(\R^n)$, 
 $\rho \in \R^+$,   we have 
\[
\|\ph (-\Delta)^\frac{s}{2}\psi \|_{L^{(\frac{1}{s},\infty)}((-\rho,\rho))} \le  C \left( E_1(\ph,\psi )+ E_{2,2\rho}(\ph,\psi) \right),
\] 
where
\[
\begin{split}
&E_1(\ph,\psi)= \|(-\Delta)^\frac{1}{4}\ph\|_{L^2(\R)} \|(-\Delta)^\frac{1}{4}\psi\|_{L^2(\R)}  \\ 
&E_{2,\rho}(\ph,\psi)= \|(-\Delta)^\frac{1}{4}\ph\|_{L^2(\R)}  \|(-\Delta)^\frac{1}{2}\psi\|_{L\log^\frac{1}{2} L (-\rho,\rho)}, \\ 
\end{split}
\]
\end{lemma}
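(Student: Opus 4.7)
The plan is to reduce the statement to the bilinear commutator estimates of \cite{MMS} combined with the Orlicz duality underlying the one-dimensional fractional Moser–Trudinger inequality. First I fix a smooth cutoff $\chi\in C^\infty_c(\R)$ with $\chi\equiv 1$ on $(-\rho,\rho)$ and supported in $(-2\rho,2\rho)$, and split $\psi=\chi\psi+(1-\chi)\psi=:\psi_1+\psi_2$. For $x\in(-\rho,\rho)$ the singular-integral kernel in $(-\Delta)^{s/2}\psi_2(x)$ is bounded uniformly away from the diagonal, so a Poisson-tail bound, Hölder's inequality, and the Sobolev embedding $H^{1/2}(\R)\hookrightarrow L^p(\R)$ (valid for every finite $p$) give
$$
\|\varphi(-\Delta)^{s/2}\psi_2\|_{L^{(1/s,\infty)}((-\rho,\rho))}\le C\|(-\Delta)^{1/4}\varphi\|_{L^2}\|(-\Delta)^{1/4}\psi\|_{L^2},
$$
which is absorbed in the $E_1$-term.

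For the near-field contribution I apply the commutator identity
$$
\varphi(-\Delta)^{s/2}\psi_1=(-\Delta)^{s/2}(\varphi\psi_1)-[(-\Delta)^{s/2},\varphi]\psi_1.
$$
The commutator on the right is the main object of \cite{MMS}, which bounds its $L^{(1/s,2)}(\R)$ Lorentz norm by $\|(-\Delta)^{1/4}\varphi\|_{L^2}\|(-\Delta)^{1/4}\psi_1\|_{L^2}\lesssim E_1(\varphi,\psi)$, using that $\chi$ is a bounded multiplier on $H^{1/2}(\R)$; the embedding $L^{(1/s,2)}\hookrightarrow L^{(1/s,\infty)}$ converts this to the weak norm required. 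For the term $(-\Delta)^{s/2}(\varphi\psi_1)$ I factor $(-\Delta)^{s/2}=I_{1-s}\circ(-\Delta)^{1/2}$ and decompose $(-\Delta)^{1/2}(\varphi\psi_1)$ by the fractional Leibniz rule into $\varphi(-\Delta)^{1/2}\psi_1+\psi_1(-\Delta)^{1/2}\varphi$ modulo a bilinear remainder already handled by \cite{MMS}; the symmetric piece $I_{1-s}(\psi_1(-\Delta)^{1/2}\varphi)$ yields an $E_1$-type contribution by interchanging the roles of $\varphi$ and $\psi$.

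The decisive contribution is $I_{1-s}(\varphi(-\Delta)^{1/2}\psi_1)$, for which I invoke the sharp endpoint Hardy–Littlewood–Sobolev mapping of the Riesz potential on bounded intervals, $I_{1-s}:L\log^{1/2}L(-2\rho,2\rho)\to L^{(1/s,\infty)}((-\rho,\rho))$, combined with the Orlicz duality pair $(\exp L^2, L\log^{1/2}L)$, yielding
$$
\|I_{1-s}(\varphi(-\Delta)^{1/2}\psi_1)\|_{L^{(1/s,\infty)}((-\rho,\rho))}\le C\|\varphi\|_{\exp L^2(-2\rho,2\rho)}\|(-\Delta)^{1/2}\psi_1\|_{L\log^{1/2}L(-2\rho,2\rho)}.
$$
The fractional Moser–Trudinger inequality of Theorem~\ref{MT3} gives $\|\varphi\|_{\exp L^2}\le C\|(-\Delta)^{1/4}\varphi\|_{L^2}$, producing exactly the $E_{2,2\rho}$ term. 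The hardest step will be the Orlicz bookkeeping needed to replace $\|(-\Delta)^{1/2}\psi_1\|_{L\log^{1/2}L}$ by the stated $\|(-\Delta)^{1/2}\psi\|_{L\log^{1/2}L(-2\rho,2\rho)}$: writing $(-\Delta)^{1/2}\psi_1=\chi(-\Delta)^{1/2}\psi-[(-\Delta)^{1/2},\chi]\psi$, one must absorb the $\chi$-commutator by a further application of the \cite{MMS} machinery, with all norms of the fixed cutoff absorbed into the constant $C(s)$. Collecting the three contributions yields the stated inequality.
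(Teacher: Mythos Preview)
Your strategy uses the right ingredients---the factorisation $(-\Delta)^{s/2}=I_{1-s}(-\Delta)^{1/2}$, the commutator estimates of \cite{MMS}, and the Orlicz endpoint mapping of $I_{1-s}$---but two of your intermediate bounds fail as stated. The far-field estimate invokes ``$H^{1/2}(\R)\hookrightarrow L^p(\R)$'', which holds only for the \emph{inhomogeneous} norm; since $E_1$ involves merely the homogeneous seminorm $\|(-\Delta)^{1/4}\psi\|_{L^2}$, and $\dot H^{1/2}(\R)$ embeds only into $\mathrm{BMO}$, the bound on $\varphi(-\Delta)^{s/2}\psi_2$ by $E_1$ with a $\rho$-independent constant is not justified (note also that the kernel $|x-y|^{-1-s}$ is \emph{not} uniformly bounded for $x$ near $\pm\rho$ and $y$ just outside, so the ``away from the diagonal'' claim needs more). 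The ``symmetric piece'' $I_{1-s}(\psi_1(-\Delta)^{1/2}\varphi)$ is equally problematic: interchanging the roles of $\varphi$ and $\psi$ in your main estimate produces $\|(-\Delta)^{1/4}\psi_1\|_{L^2}\,\|(-\Delta)^{1/2}\varphi\|_{L\log^{1/2}L}$, an $E_2$-type term in the wrong variable, not $E_1$.

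The paper avoids all of this by cutting $(-\Delta)^{1/2}\psi$ rather than $\psi$. Writing $\varphi(-\Delta)^{s/2}\psi=\varphi I_{1-s}\bigl((-\Delta)^{1/2}\psi\bigr)$ first and then inserting the cutoff $\theta_{2\rho}$ on $(-\Delta)^{1/2}\psi$ gives three pieces
\[
\varphi I_{1-s}\bigl(\theta_{2\rho}(-\Delta)^{1/2}\psi\bigr),\quad [\varphi,I_{1-s}]\bigl((1-\theta_{2\rho})(-\Delta)^{1/2}\psi\bigr),\quad I_{1-s}\bigl((1-\theta_{2\rho})\varphi(-\Delta)^{1/2}\psi\bigr),
\]
bounded respectively by Propositions~3.2, 3.4 and A.3 of \cite{MMS}. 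No Leibniz rule is needed, $(-\Delta)^{1/2}\psi$ appears directly (so the $\chi$-commutator bookkeeping you flag as ``the hardest step'' never arises), and $\psi$ itself is never localised.
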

\begin{proof}
Let $\theta \in C^\infty_c((-2,2))$ be a cut-off function such that $\theta \equiv 1$ on $(-1,1)$ and $0\le \theta\le 1$. Let us denote $\theta_\rho =\theta(\frac{\cdot}{\rho})$. Let us also introduce the Riesz operators
$$I_{1-s} u:= \kappa_s|\cdot |^{-s}*u\quad \text{for }s\in (0,1),$$
where the constant $\kappa_s$ is defined by the identity $ \widehat{ \kappa_s |\cdot|^{-s}} = {|\cdot|^{s-1}}$. With this definition $I_{1-s}$ is the inverse of $(-\Delta)^{\frac{1-s}{2}}$. Then we can split 
\[
\begin{split}
\ph (-\Delta)^\frac{s}{2} \psi  &= \ph I_{1-s} (-\Delta)^\frac{1}{2}\psi  \\
& =   \ph I_{1-s} \bra{\theta_{2\rho}(-\Delta)^\frac{1}{2}\psi} +   \ph I_{1-s} \bra{(1-\theta_{2\rho})(-\Delta)^\frac{1}{2}\psi} \\
& =  \ph I_{1-s} \bra{\theta_{{2\rho}}(-\Delta)^\frac{1}{2}\psi} + [\ph, I_{1-s}] \bra{(1-\theta_{2\rho})(-\Delta)^\frac{1}{2}\psi} +  I_{1-s} \bra{(1-\theta_{2\rho}) \ph (-\Delta)^\frac{1}{2}\psi},
\end{split}
\]
where we use the commutator notation $[u,I_{1-s}](v)= u I_{1-s} v - I_{1-s}(uv)$ for any $u,v\in C^\infty_c(\R)$. Applying respectively Proposition 3.2, Proposition 3.4 and Proposition A.3. in \cite{MMS}, we get that
\[
\begin{split}
\|  \ph I_{1-s} \bra{\theta_{2\rho}(-\Delta)^\frac{1}{2}\psi} \|_{L^{(\frac{1}{s},\infty)}(-\rho,\rho)} &=  \|  I_\frac{1}{2}\bra{(-\Delta)^\frac{1}{4}\ph} I_{1-s} \bra{\theta_{2\rho}(-\Delta)^\frac{1}{2}\psi} \|_{L^{(\frac{1}{s},\infty)}(-\rho,\rho)} \\
& \le  C \|(-\Delta)^\frac{1}{4}\ph\|_{L^2(\R)}  \|(-\Delta)^\frac{1}{2}\psi\|_{L\log^\frac{1}{2} L(-2\rho,2\rho)} \\
& = C E_{2,2\rho}(\ph,\psi),
\end{split}
\]
that
\[
\begin{split}
\|  [\ph, I_{1-s}] \bra{(1-\theta_{2\rho})(-\Delta)^\frac{1}{2}\psi} \|_{L^{(\frac{1}{s},\infty)}(-\rho,\rho)} & =   \|  [\ph, I_{1-s}] \bra{(1-\theta_{2\rho})(-\Delta)^\frac{1}{4} (-\Delta)^\frac{1}{4} \psi } \|_{L^{(\frac{1}{s},\infty)}(-\rho,\rho)}  \\
& \le C \|(-\Delta)^\frac{1}{4}\ph\|_{L^2(\R)} \|(-\Delta)^\frac{1}{s}\psi\|_{L^2(\R)}\\
& =  C E_1(\ph,\psi),
\end{split}
\]
and that
\[
\begin{split}
\| I_{1-s} \bra{(1-\theta_{2\rho}) \ph (-\Delta)^\frac{1}{2}\psi} \|_{L^{(\frac{1}{s},\infty)}(-\rho,\rho)} & \le  \| I_{1-s} \bra{ \ph (-\Delta)^\frac{1}{2}\psi} \|_{L^{(\frac{1}{s},\infty)}(\R)}  \\ &  \le C \|\ph (-\Delta)^\frac{1}{2}\psi\|_{L^1(\R)} \\
& = C \|(-\Delta)^\frac{1}{4} \ph (-\Delta)^\frac{1}{4}\psi\|_{L^1(\R)} \\
&\le C E_1(\ph,\psi).
\end{split}
\] 
\end{proof}

As a consequence of Lemma \ref{Strange Lemma} we obtain the following crucial estimate. 
 
%
 
\begin{lemma}\label{Lemma crucial}
For any $s\in (0,1)$ there exists a constant $C=C(s)$ such that  
$$
\int_{(-\rho,\rho)} | u (-\Delta)^\frac{s}{2} u| dx \le C  \rho^{1-s} (E_1(u,u)+E_{2,2\rho}(u,u))
$$
for any $\rho>0$,  and $u\in H \cap C^\infty(\R)$. Here $E_1$ and $E_{2,2\rho}$ are defined as in Lemma \ref{Strange Lemma}. 
\end{lemma}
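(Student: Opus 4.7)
The plan is to deduce the bound as a direct corollary of Lemma~\ref{Strange Lemma} by combining it with the standard Hölder-Lorentz embedding on a bounded interval, after extending Lemma~\ref{Strange Lemma} from $C^\infty_c(\R)$ to $H\cap C^\infty(\R)$ by density. Concretely, I would specialize Lemma~\ref{Strange Lemma} to $\ph=\psi=u$, obtaining
$$
\|u\,(-\Delta)^{\frac{s}{2}}u\|_{L^{(\frac{1}{s},\infty)}(-\rho,\rho)} \le C(s)\bra{E_1(u,u)+E_{2,2\rho}(u,u)},
$$
and then convert this weak-$L^{\frac{1}{s}}$ estimate into an $L^1$ estimate on the interval $(-\rho,\rho)$.

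For the second step, recall that if $p>1$ and $X$ has finite measure, then $\|f\|_{L^1(X)} \le \frac{p}{p-1}|X|^{1-\frac{1}{p}}\|f\|_{L^{(p,\infty)}(X)}$; this is the elementary inclusion of weak Lebesgue into $L^1$ on sets of finite measure (it follows from the layer-cake formula $\|f\|_{L^1(X)}=\int_0^\infty |\{|f|>t\}\cap X|\,dt$ split at the threshold $t_0=(\|f\|_{L^{(p,\infty)}}/|X|^{1/p})$). Applying this with $p=\frac{1}{s}>1$ and $X=(-\rho,\rho)$ of measure $2\rho$ yields
$$
\int_{-\rho}^{\rho}|u\,(-\Delta)^{\frac{s}{2}}u|\,dx \le \frac{(2\rho)^{1-s}}{1-s}\,\|u\,(-\Delta)^{\frac{s}{2}}u\|_{L^{(\frac{1}{s},\infty)}(-\rho,\rho)},
$$
and chaining with the weak-type bound above gives the claimed inequality, with a constant depending on $s$ through the factor $\frac{1}{s(1-s)}$ and the constant in Lemma~\ref{Strange Lemma}.

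The only genuine issue is that Lemma~\ref{Strange Lemma} is stated for test functions in $C^\infty_c(\R)$, whereas here $u$ is only assumed in $H\cap C^\infty(\R)$. I would remove this hypothesis by approximation: fix a standard cutoff $\chi\in C^\infty_c(\R)$ with $\chi\equiv 1$ on $(-1,1)$ and let $u_R(x):=\chi(x/R)u(x)\in C^\infty_c(\R)$. Since multiplication by $\chi(\cdot/R)$ converges to the identity in the operator norm on $H^{\frac12,2}(\R)$ as $R\to\infty$, we have $u_R\to u$ in $H$, hence $E_1(u_R,u_R)\to E_1(u,u)$; for the $L\log^{\frac12}L$ piece in $E_{2,2\rho}$, the same convergence plus the embedding $H^{\frac12,2}(\R)\hookrightarrow L\log^{\frac12}L_{\loc}(\R)$ for $(-\Delta)^{\frac12}u_R\to (-\Delta)^{\frac12}u$ (in a slightly larger ball) gives $E_{2,2\rho}(u_R,u_R)\to E_{2,2\rho}(u,u)$. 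Applying the lemma to $u_R$, composing with the Hölder step, and passing to the limit via Fatou on the left-hand side yields the desired estimate for $u$.

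The main obstacle will be the last step: showing that the approximants $u_R$ satisfy $u_R\,(-\Delta)^{\frac{s}{2}}u_R\to u\,(-\Delta)^{\frac{s}{2}}u$ in $L^1_{\loc}(\R)$ (or at least almost everywhere so that Fatou applies). This requires controlling the commutator $[\chi(\cdot/R),(-\Delta)^{\frac{s}{2}}]u$, which can be handled by standard fractional Leibniz / Kato-Ponce estimates together with the quadratic decay of $(-\Delta)^{\frac{s}{2}}$ of a Schwartz-type function, exploiting $R\to\infty$. The rest is purely bookkeeping of the constants to confirm the $\rho^{1-s}$ scaling.
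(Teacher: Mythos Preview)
Your overall strategy matches the paper's proof exactly: H\"older in Lorentz spaces to produce the $\rho^{1-s}$ factor, then a cutoff approximation $u_R=\chi(\cdot/R)u$ to reduce to Lemma~\ref{Strange Lemma}, and finally passage to the limit in both sides.

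Two points in your execution should be tightened, and in both cases the paper's argument is more elementary than what you propose. First, your treatment of $E_{2,2\rho}$ is not quite right: convergence $u_R\to u$ in $H^{\frac12,2}(\R)$ only controls $(-\Delta)^{1/4}u_R$ in $L^2$, not $(-\Delta)^{1/2}u_R$, so the claimed embedding does not deliver $\|(-\Delta)^{1/2}u_R\|_{L\log^{1/2}L(-2\rho,2\rho)}\to\|(-\Delta)^{1/2}u\|_{L\log^{1/2}L(-2\rho,2\rho)}$. Second, invoking Kato--Ponce/fractional Leibniz for the commutator $[\chi(\cdot/R),(-\Delta)^{s/2}]u$ is heavier than needed. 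The paper handles both issues at once with a single elementary estimate: since $v_R:=u_R-u$ is supported in $\{|y|\ge R\}$ and $|v_R|\le |u|$, for any fixed bounded interval and $x$ therein one has, directly from the integral representation,
\[
|(-\Delta)^{\sigma}v_R(x)|\le K_\sigma\int_{|y|\ge R}\frac{|u(y)|}{|x-y|^{1+2\sigma}}\,dy
\le C\|u\|_{L^2(\R)}\,R^{-\frac12-2\sigma},
\]
valid for any $\sigma\in(0,\tfrac12]$. This gives $(-\Delta)^{s/2}u_R\to(-\Delta)^{s/2}u$ and $(-\Delta)^{1/2}u_R\to(-\Delta)^{1/2}u$ in $L^\infty_{\loc}(\R)$, hence the convergence of both the left-hand side (in $L^q_{\loc}$ for any $q<\infty$, since $u_R\to u$ in $L^q_{\loc}$) and of $E_{2,2\rho}(u_R,u_R)$. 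No Fatou, no Kato--Ponce.
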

\begin{proof} By the H\"older inequality for Lorentz spaces (see e.g. \cite[Theorem 3.5]{ON}), we have 
\begin{equation}\label{uDeltas2u}
\begin{split}
\|u(-\Delta)^\frac{s}{2}u\|_{L^1(-\rho,\rho)} &\le  \| \chi_{(-\rho,\rho)}\|_{L^{(\frac{1}{1-s},1)}(\R)}\|u (-\Delta)^\frac{s}{2}u\|_{L^{(\frac{1}{s},\infty)}(-\rho,\rho)} \\
& \le C \rho^{1-s}\|u (-\Delta)^\frac{s}{2}u\|_{L^{(\frac{1}{s},\infty)}(-\rho,\rho)}.
\end{split} 
\end{equation}
%
We shall bound the RHS of \eqref{uDeltas2u} by approximating $u$ with compactly supported functions and applying Lemma \ref{Strange Lemma}. To this purpose, we take a sequence of cut-off function $(\tau_j)_{j\in \N} \subseteq  C^\infty_c(\R)$ such that $\tau_j (x)=1$ for $|x|\le j$, $\tau_j(x)=0$ for $|x|\ge j+1$, $0\le \tau_j \le 1$ and $|\tau_j'|\le 2$. We define $u_j:=\tau_j u$. We claim that
\begin{equation}\label{convinH}
u_j\rightarrow u \quad \text{ in } H^{\frac{1}{2},2}(\R) \cap L^q(\R),\; q\in (2,\infty)
\end{equation}
and
\begin{equation}\label{convlap}
(-\Delta)^\frac{s}{2} u_j \rightarrow (-\Delta)^\frac{s}{2} u\quad  \text{ in } L^\infty_{\loc}(\R). 
\end{equation}
The first claim is proved in \cite[Lemma 12]{FSV}.   We shall prove the second claim.  Set $v_j=u_j -u$. Then, for any fixed $R_0>0$ and $x\in (-R_0,R_0)$, if $j>2R_0$ we have
$$ 
|(-\Delta)^\frac{s}{2}v_j| \le K_{s} \int_{\R\setminus (-j,j)} \frac{|v_j(y)|}{|x-y|^{1+s}} dy  \le 2^{1+s} K_s \int_{\R\setminus (-j,j)} \frac{|u(y)|}{|y|^{1+s}} dy \le C \|u\|_{L^2(\R)} j^{-1-2s}. 
$$ 
with $C$ depending only on $s$.  As $j\to \infty$, we get \eqref{convlap}.
%

Now, By Lemma \ref{Strange Lemma}, we know that, for any $j$,
\begin{equation}\label{j}
\|u_j (-\Delta)^\frac{s}{2} u_j\|_{L^{(\frac{1}{s},\infty)}(-\rho,\rho)} \le C( E_1(u_j,u_j)+E_{2,\rho}(u_j,u_j)),
\end{equation}
where $C$ depends only on $s$. Clearly, \eqref{convinH}  yields 
$$
E_1(u_j,u_j)\rightarrow E_1(u,u).
$$  
Moreover
$$
E_{2,2\rho}(u_j,u_j) = E_{2,2\rho}(u,u),\quad \text{for } j\ge 2\rho. 
$$ 
Finally, \eqref{convinH} and \eqref{convlap} imply that $u_j (-\Delta)^\frac{s}{2} u_j\rightarrow u (-\Delta)^\frac{s}{2} u$ in $L^q_{\loc}(\R)$ for every $q\in [1,\infty)$, and therefore in $L^{(\frac{1}{s},\infty)}(-\rho,\rho)$.  Then, passing to the limit in \eqref{j} we get  
$$
\|u (-\Delta)^\frac{s}{2} u \|_{L^{(\frac{1}{s},\infty)}(-\rho,\rho)} \le C (E_1(u,u)+E_{2,2\rho}(u,u)),
$$
and together with \eqref{uDeltas2u} we conclude.
\end{proof}

We can now apply Lemma \ref{Lemma crucial} to $u_k$. After scaling, we get the following bound on $\eta_k$.

\begin{lemma}\label{lemmaDeltas2}
For any $s\in (0,1)$, there exists a constant $C=C(s)>0$ such that 
$$
\int_{-R}^R |(-\Delta)^\frac{s}{2}\eta_k| dx \le C R^{1-s}, \quad \text{ for any } R>0  \text{ and } k\ge k_0(R).
$$
\end{lemma}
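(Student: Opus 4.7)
The plan is to translate the estimate on $\eta_k$ into one on $u_k$ via the scaling of the fractional Laplacian, and then to invoke Lemma \ref{Lemma crucial} on the rescaled integrand. Applying the identity $(-\Delta)^\frac{s}{2}(v(r\cdot))(x) = r^s((-\Delta)^\frac{s}{2} v)(rx)$ to $\eta_k(x) = 2\alpha_k\mu_k(u_k(r_kx)-\mu_k)$ and changing variables $y=r_kx$ yields
\[
\int_{-R}^R|(-\Delta)^\frac{s}{2}\eta_k(x)|\,dx = 2\alpha_k\mu_k r_k^{s-1}\int_{-Rr_k}^{Rr_k}|(-\Delta)^\frac{s}{2} u_k(y)|\,dy.
\]
To insert the weight $u_k$ so that Lemma \ref{Lemma crucial} applies, I combine $u_k(r_kx) = \mu_k + \eta_k(x)/(2\alpha_k\mu_k)$ with the local $L^\infty$-bound on $\eta_k$ from Lemma \ref{localconv}: for $k\ge k_0(R)$ this forces $u_k(y) \ge \mu_k/2$ on $|y|\le Rr_k$, so that
\[
\int_{-Rr_k}^{Rr_k}|(-\Delta)^\frac{s}{2} u_k|\,dy \le \frac{2}{\mu_k}\int_{-Rr_k}^{Rr_k}|u_k(-\Delta)^\frac{s}{2} u_k|\,dy.
\]

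Lemma \ref{Lemma crucial} with $\rho=Rr_k$ then bounds the right-hand side by $\frac{2C(s)(Rr_k)^{1-s}}{\mu_k}\bra{E_1(u_k,u_k)+E_{2,2Rr_k}(u_k,u_k)}$. Substituting back into the scaling formula, the powers of $\mu_k$ and $r_k$ cancel exactly, leaving
\[
\int_{-R}^R|(-\Delta)^\frac{s}{2}\eta_k|\,dx \le 4\alpha_k C(s) R^{1-s}\bra{E_1(u_k,u_k)+E_{2,2Rr_k}(u_k,u_k)}.
\]
It remains to verify that $E_1(u_k,u_k)+E_{2,2Rr_k}(u_k,u_k)$ stays bounded as $k\to\infty$ for each fixed $R$. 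The first term is immediate from the energy bound \eqref{energybound}: $E_1(u_k,u_k) = \|\ql u_k\|_{L^2(\R)}^2 \le \|u_k\|_H^2 \le \Lambda+o(1)$. For the Zygmund factor $\|\hl u_k\|_{L\log^\frac{1}{2} L(-2Rr_k, 2Rr_k)}$ inside $E_{2,2Rr_k}$, I use the equation \eqref{eqR} together with the estimate $u_k\le \mu_k+O(\mu_k^{-1})$ on $(-2Rr_k,2Rr_k)$ to derive the pointwise bound $|\hl u_k|\le C/(r_k\mu_k)$, and then compute the Orlicz norm directly on the interval of length $4Rr_k$ using the asymptotic $\log(1/r_k)\sim \alpha_k\mu_k^2$ implied by \eqref{defmukrk}.

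The main obstacle is precisely this last Orlicz estimate: the pointwise bound $1/(r_k\mu_k)$ blows up and the interval length $4Rr_k$ vanishes, so uniform control requires that the logarithmic weight $\log^\frac{1}{2}(e+\cdot)$ absorb exactly the factor $\mu_k$ generated by the exponential nonlinearity. This near-resonance is what forces the Zygmund class $L\log^\frac{1}{2} L$ into the commutator estimate of Lemma \ref{Strange Lemma}, and handling it carefully---rather than falling back on a crude $L^\infty$-bound---is what makes the proof go through.
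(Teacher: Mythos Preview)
Your reduction to Lemma~\ref{Lemma crucial} via the scaling identity and the insertion of the weight $u_k\ge\mu_k/2$ is exactly the paper's argument. The divergence is in how you control $E_{2,2Rr_k}(u_k,u_k)$, and here your pointwise route loses a power of $R$: carrying out the Orlicz computation you describe, the integral of $|\hl u_k|\log^{1/2}(2+|\hl u_k|)$ over $(-2Rr_k,2Rr_k)$ is of order
\[
4Rr_k\cdot\frac{C(R)}{r_k\mu_k}\cdot\log^{1/2}\!\Bigl(\frac{C(R)}{r_k\mu_k}\Bigr)\sim \frac{R}{\mu_k}\cdot\mu_k=O(R),
\]
so $E_{2,2Rr_k}=O(R)$ and your final estimate is $CR^{2-s}$ rather than $CR^{1-s}$. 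You therefore prove a statement strictly weaker than the lemma (the constant depends on $R$). This still suffices for Lemma~\ref{LemmaLs}, which only uses $R=1$, but it does not establish the lemma as written.

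The paper gets the $R$-independent constant by a simpler and sharper observation: writing $f_k:=\hl u_k=\lambda_ku_ke^{\alpha_ku_k^2}-u_k$, one has pointwise $\log^{1/2}(2+|f_k|)\le C(1+u_k)$, hence
\[
|f_k|\log^{1/2}(2+|f_k|)\le C\bigl(u_k|f_k|+1\bigr),
\]
and the right-hand side is uniformly in $L^1$ on any fixed interval by~\eqref{eqR} and~\eqref{energybound}. This makes $E_{2,2\rho}(u_k,u_k)$ bounded uniformly for all $\rho\in(0,1)$ and all $k$, so Lemma~\ref{Lemma crucial} delivers $C=C(s)$ independent of $R$.

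A minor side remark: your asymptotic $\log(1/r_k)\sim\alpha_k\mu_k^2$ is not guaranteed when $\lambda_\infty=0$ (it would require $\log\lambda_k=o(\mu_k^2)$); only the upper bound $\log(1/r_k)\le C\mu_k^2$, from boundedness of $\lambda_k$, is immediate---but that is all your computation actually uses.
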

\begin{proof}
First we observe that  $f_k:=(-\Delta)^\frac{1}{2} u_k=\lambda_k u_k e^{\alpha_k  u_k^2} -u_k$  is bounded in $L\log^\frac{1}{2}L_{\loc}(\R)$. Indeed, we have
$$
\log^\frac{1}{2}(2+ |f_k|) \le C(1+u_k),
$$
so that
$$
|f_k|\log^\frac{1}{2}(2+ |f_k|)  \le C |f_k|\left( 1+u_k\right) = O(|f_k|u_k+1).
$$
Since $|f_k|u_k$ is bounded in $L^1(\R)$ by \eqref{eqR} and  \eqref{energybound},  we get that $f_k$ is bounded in $L \log^\frac{1}{2} L_{\loc}(\R)$. 

Then Lemma \ref{Lemma crucial} and \eqref{energybound} imply the existence of $C=C(s)$ such that 
$$
\int_{-\rho}^\rho | u_k (-\Delta)^\frac{s}{2} u_k| dx \le C  \rho^{1-s}, \quad \rho \in (0,1).
$$
For any $R>0$, we can apply this with $\rho = R r_k$ and rewrite it in terms of $\eta_k$. Then, we obtain
$$
\int_{-R}^R \bra{1+\frac{\eta_k}{\mu_k^2}} |(-\Delta)^\frac{s}{2} \eta_k| \le C R^{1-s}.  
$$ 
Since, by Lemma \ref{localconv}, $\eta_k$ is locally bounded, if $k$ is sufficiently large we get $1+\frac{\eta_k}{\mu_k^2}\ge \frac{1}{2}$ and the proof is complete. 
\end{proof}

\begin{lemma}\label{LemmaLs}
The sequence $(\eta_k)$ is bounded in $L_s(\R)$ for any $s>0$.  
\end{lemma}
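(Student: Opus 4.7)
The plan is to exploit three pieces of information already gathered on $\eta_k$: (i) the local $L^1$ bound of $(-\Delta)^{s/2}\eta_k$ coming from Lemma \ref{lemmaDeltas2}; (ii) the sign information $\eta_k\le 0$ with $\eta_k(0)=0$; and (iii) the uniform $C^{0,\alpha}_{\loc}(\R)$ bound of Lemma \ref{localconv}. A preliminary reduction: since the weight $(1+|y|)^{-(1+2s)}$ decreases in $s$, one has $L_{s_1}(\R)\subseteq L_{s_2}(\R)$ with $\|u\|_{L_{s_2}}\le\|u\|_{L_{s_1}}$ whenever $s_1\le s_2$. Hence it is enough to prove that
\[
\int_\R \frac{|\eta_k(y)|}{(1+|y|)^{1+t}}\,dy \le C,\qquad k\ge k_0,
\]
for some fixed $t\in(0,1)$ (which yields the statement for all $s\ge t/2$ and, by the inclusion above, in fact for every $s>0$).

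Fix such a $t\in(0,1)$. Because each $\eta_k$ belongs a priori to $L_t(\R)\cap C^{0,\alpha}_{\loc}(\R)$ with any $\alpha\in(t,1)$ (the membership in $L_t$ follows from $u_k\in L^2(\R)$ by the explicit expression of $\eta_k$, even if with a non-uniform constant), the pointwise singular integral representation
\[
(-\Delta)^{t/2}\eta_k(x)=c_t\,\mathrm{P.V.}\int_\R \frac{\eta_k(x)-\eta_k(y)}{|x-y|^{1+t}}\,dy
\]
is valid. I will apply it for $x\in(-1,1)$, splitting the $y$-integral into $\{|y|\le 2\}$ and $\{|y|>2\}$. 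On the first region the uniform $C^{0,\alpha}_{\loc}$-bound (with $\alpha>t$) makes the integrand pointwise dominated by $C|x-y|^{\alpha-1-t}$, which is integrable and yields a uniform bound $|\int_{|y|\le 2}\cdots\,dy|\le C_1$. On the second region, using $\eta_k(x)\ge -C_1$, $-\eta_k(y)=|\eta_k(y)|\ge 0$, and $|x-y|\le 2|y|$, one obtains
\[
\frac{\eta_k(x)-\eta_k(y)}{|x-y|^{1+t}}\;\ge\;\frac{|\eta_k(y)|-C_1}{(2|y|)^{1+t}}.
\]
Integrating in $y\in\{|y|>2\}$ one gets the pointwise bound
\[
(-\Delta)^{t/2}\eta_k(x)\;\ge\;c\int_{|y|>2}\frac{|\eta_k(y)|}{|y|^{1+t}}\,dy - C_2 \qquad \forall\, x\in(-1,1),
\]
for constants independent of $k$.

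Finally, integrating this lower bound over $x\in(-1,1)$ and using Lemma \ref{lemmaDeltas2} with $R=1$ to bound $\int_{-1}^1(-\Delta)^{t/2}\eta_k\,dx$ from above by $\int_{-1}^1|(-\Delta)^{t/2}\eta_k|\,dx\le C$, one deduces
\[
\int_{|y|>2}\frac{|\eta_k(y)|}{|y|^{1+t}}\,dy\le C.
\]
Combined with the trivial bound $\int_{-2}^2 |\eta_k|\,dy\le C$ coming from Lemma \ref{localconv}, this gives $\|\eta_k\|_{L_{t/2}(\R)}\le C$, and the reduction step concludes the proof. The main technical point is the justification of the pointwise singular integral formula and a clean handling of the principal value near $y=x$; both are secured by the $C^{0,\alpha}_{\loc}$ bound with $\alpha>t$ and the a priori (but non-uniform) membership of $\eta_k$ in $L_t(\R)$. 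The rest of the argument is just a sign-plus-monotonicity computation.
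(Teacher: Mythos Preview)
Your approach is essentially the paper's own: use the singular integral representation of $(-\Delta)^{t/2}\eta_k$, split into near and far regions, control the near part via the $C^{0,\alpha}$ estimate with $\alpha>t$, extract the weighted tail integral from the far part using $\eta_k\le 0$, and close with Lemma~\ref{lemmaDeltas2}. The paper carries this out with the double integral $\int_{-1}^1\int_\R$ decomposed as $I_1+I_2+I_3$, which is precisely your pointwise argument after integrating in $x$.

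Two small points need tidying up. First, the reduction step is stated backwards: fixing one $t$ and proving boundedness in $L_{t/2}$ only yields the statement for $s\ge t/2$, not for all $s>0$; you must run the argument for every $t\in(0,1)$ (which it does, since an $\alpha\in(t,1)$ is always available), and then the inclusion $L_{s_1}\subseteq L_{s_2}$ for $s_1\le s_2$ covers $s\ge 1/2$. Second, your displayed lower bound
\[
\frac{\eta_k(x)-\eta_k(y)}{|x-y|^{1+t}}\;\ge\;\frac{|\eta_k(y)|-C_1}{(2|y|)^{1+t}}
\]
is not literally valid when $|\eta_k(y)|<C_1$: if the right-hand side is negative, replacing the denominator $|x-y|^{1+t}$ by the larger $(2|y|)^{1+t}$ goes the wrong way. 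The fix is to split $\eta_k(x)-\eta_k(y)=\eta_k(x)+|\eta_k(y)|$ and use $|x-y|\ge|y|/2$ on the (nonpositive) $\eta_k(x)$ term and $|x-y|\le 2|y|$ on the $|\eta_k(y)|$ term; this is exactly the paper's separate handling of $I_2$ and $I_3$, and the conclusion is unchanged.
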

\begin{proof}  
It is sufficient to prove the statement for $s\in (0,1/2)$. Since $\eta_k \le 0$, Lemma \ref{lemmaDeltas2} gives
\[\begin{split}
C  & \ge \frac{1}{K_s}\int_{-1}^1 |(-\Delta)^s \eta_k| dx \\ 
& \ge  \left|\int_{-1}^1 \int_{\R} \frac{\eta_k(x)-\eta_k(y)}{|x-y|^{1+2s}}dydx \right|\\ 
&\ge    \underbrace{\int_{-1}^1 \int_{-2}^2 \frac{\eta_k(x)-\eta_k(y)}{|x-y|^{1+2s}}dydx}_{=:I_1} +  \underbrace{\int_{-1}^1 \int_{(-2,2)^c} \frac{\eta_k(x)}{|x-y|^{1+2s}}dydx}_{=:I_2} +   \underbrace{\int_{-1}^1 \int_{(-2,2)^c} \frac{-\eta_k(y)}{|x-y|^{1+2s}}dydx}_{=:I_3}. 
\end{split}\]
Take $2s <\alpha<1$. Since $\eta_k$ is bounded in $C^\alpha_{\loc}(\R)$ by Lemma \ref{localconv}, we have that 
$$
|I_1| \le C \int_{-1}^1\int_{-2}^2 \frac{dy dx}{|x-y|^{1+2s-\alpha}} \le C \int_{-3}^3 \frac{dz}{|z|^{1+2s-\alpha}} = C. 
$$
Similarly 
$$
|I_2|\le \int_{-1}^1 |\eta_k(x)| \int_{(x-1,x+1)^c} \frac{1}{|x-y|^{1+2s}} dydx \le C.  
$$
Therefore, we obtain that
$$
I_3=\int_{-1}^1\int_{(-2,2)^c} \frac{|\eta_k(y)|}{|x-y|^{1+2s}} dy dx\le C.  
$$
But for $x\in (-1,1)$ and $y\notin (-2,2)$ we have  $|x-y|\le |y|+|x|\le 2|y| \le 2(1+|y|^{1+2s})^{\frac{1}{1+2s}}$.  Hence 
$$
I_3=  \int_{-1}^1\int_{(-2,2)^c} \frac{|\eta_k(y)|}{|x-y|^{1+2s}} dy dx \ge \frac{1}{2^{2s}} \int_{(-2,2)^c} \frac{|\eta_k(y)|}{1+|y|^{1+2s}}dy.  
$$

This and Lemma \ref{localconv} imply that $\eta_k$ is bounded in $L_s(\R)$. 
\end{proof}
\medskip

\noindent\emph{Proof of Theorem \ref{compact} (completed).} By Lemma \ref{localconv}, up to a subsequence we can assume that $\eta_k \to \eta_\infty$ in $C^\alpha
_{\loc}(\R)$ for any $\alpha\in (0,1)$, with $\eta_\infty \in C^\alpha_{\loc}(\R)$. Let us denote 
$$
f_k:= (-\Delta)^\frac{1}{2} \eta_k =  2\bra{1+\frac{\eta_k}{2\alpha_k \mu_k^2}}e^{\eta_k+\frac{\eta_k^2}{4\alpha_k \mu_k^2}} - 2 r_k \alpha_k \mu_k^2 \bra{1+\frac{\eta_k}{2\alpha_k \mu_k^2}}.
$$
As observed in the proof of Lemma \ref{localconv}, we have $r_k\mu_k^2\to 0$ as $k\to \infty$ and thus  $f_k\to 2e^{\eta_\infty}$ locally uniformly on $\R$. Moreover $f_k$ is bounded in $L^\infty(\R)$. Then,  for any Schwarz function $\ph \in \mathcal{S}(\R)$ we have 
$$
\int_{\R} |f_k -2 e^{\eta_\infty}| |\ph| dx\le o(1)\int_{(-R,R)} |\ph|dx + (\|f_k\|_{L^\infty(\R)} + \|2e^{\eta_\infty}\|_{L^\infty(\R)})\int_{(-R,R)^c} |\ph| dx\rightarrow 0 
$$
as $k,R\to+\infty$. On the other hand, we know by Lemma \ref{LemmaLs} that $\eta_k$ is bounded in $L_s(\R)$ and, consequently, $\eta_\infty\in L_s(\R)$, $s>0$. In particular, for $s  \in (0,\frac{1}{2})$, letting $k\to\infty$ first, and then $R\to\infty$ we get
\[\begin{split}
\int_{\R}|\eta_k-\eta_\infty| &|(-\Delta)^\frac{1}{2} \ph|  dx \\ & \le \|(-\Delta)^\frac{1}{2}\ph \|_{L^\infty(-R,R)} \|\eta_k-\eta_\infty\|_{L^1(-R,R)} +C \int_{(-R,R)^c} \frac{|\eta_k(x)-\eta_\infty(x)|}{|x|^2} dx \\ &\le C  \|\eta_k-\eta_\infty\|_{L^1(-R,R)} + C R^{2s-1} (\|\eta_k\|_{L_s(\R)}+ \|\eta_\infty\|_{L_s(\R)})\rightarrow 0. 
\end{split}\]
Then $\eta_\infty$ is a weak solution $(-\Delta)^\frac{1}{2} \eta_\infty= 2 e^{\eta_\infty}$ and $\eta_\infty\in L_s(\R)$ for any $s$. Moreover, repeating the argument of Corollary \ref{uinfty0} and using \eqref{energybound}, we get
\begin{equation}\label{energysmall}
\frac{1}{\pi} \int_{-R}^{R} e^{\eta_\infty} d\xi = \lim_{k\to \infty} \int_{-R r_k}^{R rk} \lambda_k u_k^2 e^{\alpha_k u_k^2} dx \le \limsup_{k\to \infty} \|u_k\|_{H}^2 = \Lambda,
\end{equation}
which implies $e^{\eta_\infty}\in L^1(\R)$. Then $\eta_\infty(x)= -\log(1+x^2)$, see e.g. \cite[Theorem 1.8]{DMR}. 

To complete the proof, we shall study the properties of the weak limit $u_\infty$ of $u_k$ in $H$. First, we show that $u_\infty$ is a weak solution of \eqref{eqinftyR}. Let us denote 
$$g_k := \lambda_k u_k e^{\alpha_k u_k^2},\quad g_\infty := \lambda_\infty u_\infty e^{\pi u_\infty^2}.$$
Take any function $\ph\in \mathcal{S}(\R)$. On the one hand, since $(-\Delta)^\frac{1}{2} \ph\in L^2(\R)$ and $u_k \rw u_\infty$ weakly in $L^2(\R)$, we have
$$
\int_{\R} (u_k-u_\infty) (-\Delta)^\frac{1}{2}\ph \, dx +  \int_{\R} (u_k-u_\infty) \ph \, dx\to 0,
$$
as $k\to \infty$. On the other hand,  for any large $t>0$ we get
$$
\int_{\R} | g_k- g_\infty| |\ph| dx \le \int_{\{u_k\le t\}} |  g_k-g_\infty||\ph| dx+ \frac{\|\ph\|_{L^\infty(\R)}}{t}  \int_{\R} u_k (g_k+ g_\infty)   dx = o(1)+O(t^{-1})\to 0
$$
as $k,t\to \infty$, where we used that $g_\infty\in L^2(\R)$ by Theorem \ref{MT3} (see e.g. Lemma 2.3 of \cite{IMM}) together with the dominated convergence theorem and the bounds $\|u_k g_k\|_{L^1(\R)}\le \Lambda$ and $\|u_k\|_{L^2(\R)}\le \Lambda$. 
Then, $u_\infty$ is a weak solution of \eqref{eqinftyR}. 

Now, observe that
$$
\|u_k\|_{H}^2 = \int_{\R} g_k u_k dx = \int_{-Rr_k}^{Rr_k} g_k u_k dx + \int_{\R \setminus (-Rr_k,Rr_k)} g_k u_k dx 
$$ 
with 
$$
\lim_{k\to \infty} {\int_{-Rr_k}^{Rr_k}} u_k g_k \, dx =  \frac{1}{\pi} \int_{-R}^R e^{\eta_\infty} dx \to 1
$$
as $R\to \infty$, and 
$$
\liminf_{k\to \infty} \int_{\R \setminus (-Rr_k,Rr_k)} g_k u_k dx = \int_{\R} g_\infty u_\infty dx = \|u_\infty\|_{H}^2,
$$
for any $R>1$, by Fatou's lemma. Thus we conclude that
$$
\|u_k\|_{H}^2 \ge \|u_\infty\|_H^2 + 1. 
$$
{Finally, to prove that $u_k\to u_\infty$ in $C^\ell_{\loc}(\R\setminus\{0\})$ for every $\ell\ge 0$, we use the monotonicity of $u_k$, which implies that $u_k$ is locally bounded away from $0$, hence we can conclude by elliptic estimates, as in Lemma \ref{lemma3.1}.}


\section{Proof of Theorem \ref{extR}}
Let us denote 
$$
E_\alpha(u)=\int_{\R} (e^{\alpha u^2}-1) dx,\quad D_\alpha:=\sup_{u\in H: \|u\|_H\le 1}E_\alpha(u).
$$         

The proof of Theorem \ref{extR} is organized as follows. First, we prove that $D_\alpha$ is attained for $\alpha\in (0,\pi)$ sufficiently close to $\pi$. Then, we fix a sequence $(\alpha_k)_{k\in N}$ such that $\alpha_k \nearrow \pi$ as $k\to +\infty$, and for any large $k$ we take a positive extremal $u_k \in H$ for  $D_{\alpha_k}$. With a contradiction argument similar to the one of Section \ref{SecExtI}, we show that $\mu_k:= \sup_{\R} u_k\le C$. Finally, we show that $u_k \to u_\infty$ in $L^\infty_{\loc}(\R) \cap L^2(\R)$, where $u_\infty$ is a maximizer for $D_{\pi}$.

\subsection{Subcritical extremals: Ruling out vanishing}
The following lemma describes the effect of the lack of compactness of the embedding ${H\subseteq L^2(\R)}$ on $E_\alpha$, and holds uniformly for $\alpha\in [0,\pi]$.

\begin{lemma}\label{LemmaVanishing}
Let $(\alpha_k)\subseteq [0,\pi]$ and $(u_k)\subseteq  H $ be two sequences such that:
\begin{enumerate}
\item $\alpha_k\to \alpha_\infty \in [0,\pi]$ as $k\to \infty$.
\item $\|u_k\|_{H}\le 1$, $u_k \rw u_{\infty}$ weakly in $H$, $u_k\to u_\infty$ a.e. in $\R$, and $e^{\alpha_k u_k^2}\to e^{\alpha_\infty u_\infty^2}$ in $L^1_{\loc}(\R)$ as $k\to \infty$.
\item The $u_k$'s are even and monotone decreasing i.e. $u_k(-x)=u_k(x)\ge u_k(y)$ for $0\le x \le y$.
\end{enumerate} 
Then we have
$$
 E_{\alpha_k} (u_k) =E_{\alpha_\infty}(u_\infty) + \alpha_\infty \left(\| u_k\|_{L^2(\R)}^2 - \|u_\infty\|_{L^2(\R)}^2 \right) +o(1),
$$
as $k\to \infty$. 
\end{lemma}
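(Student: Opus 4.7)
The plan is to decompose the integral into a compact window $[-R,R]$ and its complement, use the even-decreasing hypothesis to obtain pointwise decay at infinity, and linearize the exponential in the tail.

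\textbf{Step 1 (Decay from monotonicity).} Since $u_k$ is non-negative, even and monotone decreasing in $|x|$, and $\|u_k\|_{L^2(\R)}^2\le \|u_k\|_H^2\le 1$, one has
$$
R\,u_k(R)^2 \le \int_0^R u_k(\xi)^2\,d\xi \le \tfrac{1}{2},
\qquad\text{so }u_k(x)^2\le \tfrac{1}{2R}\text{ for }|x|\ge R\ge 1.
$$
By pointwise a.e.\ passage to the limit, $u_\infty$ inherits the same monotonicity and the same bound.

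\textbf{Step 2 (Interior convergence).} On $[-R,R]$, hypothesis (2) gives directly
$$
\int_{-R}^R\!\bra{e^{\alpha_k u_k^2}-1}\,dx\longrightarrow \int_{-R}^R\!\bra{e^{\alpha_\infty u_\infty^2}-1}\,dx\quad\text{as }k\to\infty,
$$
and the analogue with $u_\infty$ replacing the $k$-sequence is just the identity.

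\textbf{Step 3 (Tail linearization).} On $\{|x|\ge R\}$ one has $\alpha_k u_k^2\le \pi/(2R)\le 1$ for $R$ large. From the elementary inequality $|e^t-1-t|\le C t^2$ on $|t|\le 1$, we get $\bigl|e^{\alpha_k u_k^2}-1-\alpha_k u_k^2\bigr|\le C\alpha_k^2 u_k^4\le (C/R)\,\alpha_k u_k^2$, and integrating,
$$
\int_{|x|\ge R}\!\bra{e^{\alpha_k u_k^2}-1}\,dx \;=\; \alpha_k\!\int_{|x|\ge R}\! u_k^2\,dx \;+\; O(1/R),
$$
uniformly in $k$, and the same identity with $u_\infty$ and $\alpha_\infty$.

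\textbf{Step 4 (Trading the $L^2$-tail).} When $\alpha_\infty>0$, the bound $u_k^2\le \alpha_k^{-1}(e^{\alpha_k u_k^2}-1)\le 2\alpha_\infty^{-1}(e^{\alpha_k u_k^2}-1)$ (valid for $k$ large) together with the $L^1_{\loc}$ convergence of the majorant and a.e.\ convergence of $u_k^2$ yields by generalized dominated convergence $u_k^2\to u_\infty^2$ in $L^1(-R,R)$. Hence
$$
\alpha_k\!\int_{|x|\ge R}\! u_k^2\,dx = \alpha_k\|u_k\|_{L^2}^2 - \alpha_k\!\int_{-R}^R u_k^2\,dx = \alpha_\infty\bra{\|u_k\|_{L^2}^2-\|u_\infty\|_{L^2}^2}+\alpha_\infty\!\int_{|x|\ge R}\! u_\infty^2\,dx + o_k(1).
$$
When $\alpha_\infty=0$, the lemma reduces to $E_{\alpha_k}(u_k)\to 0$, which follows from the convexity bound
$$
e^{\alpha u^2}-1 = \sum_{n\ge 1}\frac{(\alpha u^2)^n}{n!}\le \frac{\alpha}{\pi}\bra{e^{\pi u^2}-1},\qquad 0\le \alpha\le\pi,
$$
so $E_{\alpha_k}(u_k)\le (\alpha_k/\pi)D_\pi\to 0$, matching the right-hand side.

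\textbf{Step 5 (Assembly).} Combining Steps 2--4 gives, for $\alpha_\infty>0$,
$$
E_{\alpha_k}(u_k)=E_{\alpha_\infty}(u_\infty)-\alpha_\infty\!\int_{|x|\ge R}\! u_\infty^2\,dx+\alpha_\infty\bra{\|u_k\|_{L^2}^2-\|u_\infty\|_{L^2}^2}+\alpha_\infty\!\int_{|x|\ge R}\! u_\infty^2\,dx+O(1/R)+o_k(1),
$$
where the two tail contributions of $u_\infty$ cancel exactly. Sending $k\to\infty$ and then $R\to\infty$ yields the claim. The main obstacle is precisely this cancellation: one has to apply the linearization of Step 3 both to the $u_k$'s and to the limit $u_\infty$ and split $\|u_k\|_{L^2}^2$ into an interior part (handled via generalized dominated convergence) and a tail part (absorbed into the final formula), so that the error terms depending on the cutoff $R$ survive only as $O(1/R)$.
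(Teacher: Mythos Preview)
Your proof is correct and rests on the same two ingredients as the paper's: the decay $u_k(x)^2\le (2|x|)^{-1}$ coming from monotonicity, and the quadratic remainder bound $e^t-1-t=O(t^2)$ to linearize the exponential at infinity. The paper organizes the argument more economically by subtracting the linear term $\alpha_k u_k^2$ \emph{globally} rather than only on the tail---proving in one stroke that $\int_\R (e^{\alpha_k u_k^2}-1-\alpha_k u_k^2)\,dx \to \int_\R (e^{\alpha_\infty u_\infty^2}-1-\alpha_\infty u_\infty^2)\,dx$ via dominated convergence on $|x|\ge 1$ (the integrand is $O(|x|^{-2})$) and the $L^1_{\loc}$ hypothesis plus the compact embedding $H\hookrightarrow L^2_{\loc}$ on $|x|\le 1$---which eliminates the cutoff $R$ with its $O(1/R)$ bookkeeping, the generalized dominated convergence step, and the separate treatment of $\alpha_\infty=0$.
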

\begin{proof}
Since $u_k$ is even and decreasing, we know that 
\begin{equation}\label{decay}
u_k(x)^2 \le \frac{\|u_k\|_{L^2(\R)}^2}{2|x|} \le \frac{1}{2|x|},
\end{equation}
for any $x\in \R\setminus \{0\}$. In particular, there exists a constant $C>0$, such that 
$$
e^{\alpha_k u_k^2(x)} - 1-\alpha_k u_k^2(x) \le C |x|^{-4},
$$
for $|x|\ge 1$.  Applying the dominated convergence theorem for $|x|\ge 1$,  using the assumption  that $e^{\alpha_k u_k^2} \to e^{\alpha_\infty u_\infty^2}$ in $L^1_{\loc}(\R)$, and recalling that $(u_k)$ is precompact in $L^1_{\loc}(\R)$,  we find that 
$$
\int_{\R} (e^{\alpha_k u_k^2} - 1-\alpha_k u_k^2 ) dx \to \int_{\R} (e^{\alpha_\infty u_\infty^2} - 1-\alpha_\infty u_\infty^2)dx,
$$
and the Lemma  follows. 
\end{proof}

\begin{lemma}\label{novan}
Take $\alpha\in (0,\pi)$. If $D_\alpha> \alpha$, then  $D_\alpha$ is attained {by an even an decreasing function}, i.e. there exists $u_\alpha\in H$ {even and decreasing} s.t. $\|u_\alpha\|_H =1$ and $E_\alpha(u_\alpha) =D_\alpha$.
\end{lemma}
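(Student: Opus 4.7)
The plan is to take a maximizing sequence $(u_k) \subset H$ with $\|u_k\|_H \le 1$ and $E_\alpha(u_k) \to D_\alpha$, and to extract an extremal as its weak limit. By the fractional Pólya--Szegő inequality together with the equimeasurability of $u_k^2$ and $(u_k^*)^2$, I may replace $u_k$ with its symmetric decreasing rearrangement and so assume each $u_k$ is even and monotone decreasing. Banach--Alaoglu yields a weak limit $u_k \rw u_\infty$ in $H$, and the local compactness of the embedding $H \hookrightarrow L^2_{\loc}(\R)$ gives $u_k \to u_\infty$ in $L^2_{\loc}(\R)$ and a.e., up to a subsequence.

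To apply Lemma \ref{LemmaVanishing} I need local $L^1$ convergence of $e^{\alpha u_k^2}$. Picking any $\alpha' \in (\alpha,\pi)$, the Moser--Trudinger inequality \eqref{stimaMTR} gives the uniform bound $\int_\R (e^{\alpha' u_k^2} - 1)\,dx \le D_{\alpha'} < \infty$, so $e^{\alpha u_k^2}$ is bounded in $L^{\alpha'/\alpha}$ on every bounded subset of $\R$; combining this equi-integrability with the a.e.\ convergence via Vitali's theorem gives $e^{\alpha u_k^2} \to e^{\alpha u_\infty^2}$ in $L^1_{\loc}(\R)$. Lemma \ref{LemmaVanishing} then yields
$$D_\alpha = E_\alpha(u_\infty) + \alpha\bigl( L - \|u_\infty\|_{L^2(\R)}^2 \bigr), \qquad L := \lim_k \|u_k\|_{L^2(\R)}^2,$$
the existence of the limit $L$ being a consequence of the identity itself. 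If $u_\infty \equiv 0$ this reduces to $D_\alpha = \alpha L \le \alpha$, contradicting the hypothesis $D_\alpha > \alpha$; hence $u_\infty \not\equiv 0$.

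The main step is then to conclude that $u_\infty$ actually attains $D_\alpha$. Weak lower semicontinuity of the $(-\Delta)^{1/4}$-seminorm combined with $\|u_k\|_H \le 1$ gives
$$L + \|(-\Delta)^{1/4} u_\infty\|_{L^2(\R)}^2 \le \liminf_k \|u_k\|_H^2 \le 1,$$
so $L - \|u_\infty\|_{L^2(\R)}^2 \le 1 - \|u_\infty\|_H^2$ and therefore $D_\alpha \le E_\alpha(u_\infty) + \alpha(1 - \|u_\infty\|_H^2)$. The delicate point I expect to be the main obstacle is ruling out $\|u_\infty\|_H < 1$. For this, set $t := \|u_\infty\|_H^2 \in (0,1]$ and test with the admissible function $u_\infty/\sqrt{t}$: the elementary inequality $e^{\alpha s/t} - 1 \ge t^{-1}(e^{\alpha s} - 1)$ for $s \ge 0$, $t \in (0,1]$ (immediate from vanishing at $s=0$ and a non-negative derivative in $s$) yields
$$D_\alpha \ge E_\alpha(u_\infty/\sqrt{t}) = \int_\R (e^{\alpha u_\infty^2/t} - 1)\,dx \ge \frac{1}{t}\int_\R (e^{\alpha u_\infty^2} - 1)\,dx = \frac{E_\alpha(u_\infty)}{t}.$$
Combining this lower bound with the earlier upper bound forces $E_\alpha(u_\infty) \le \alpha t$ whenever $t < 1$, whence $D_\alpha \le \alpha t + \alpha(1 - t) = \alpha$, once more contradicting the hypothesis $D_\alpha > \alpha$. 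Therefore $\|u_\infty\|_H = 1$; Fatou's lemma then forces $L = \|u_\infty\|_{L^2(\R)}^2$, and the identity reduces to $D_\alpha = E_\alpha(u_\infty)$, so that $u_\infty$ is the desired even and decreasing extremal.
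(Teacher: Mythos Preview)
Your proof is correct. Both you and the paper begin identically --- rearrangement, weak limit, Vitali, Lemma~\ref{LemmaVanishing}, and the observation that $u_\infty\not\equiv 0$ --- but the arguments diverge at the step where one rules out loss of compactness.

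The paper exploits the \emph{dilation structure} of $H^{\frac12,2}(\R)$: it sets $\tau:=\|u_\infty\|_{L^2}^2/L$ and considers $v_\infty(x):=u_\infty(\tau x)$, using that $\|(-\Delta)^{1/4}v_\infty\|_{L^2}=\|(-\Delta)^{1/4}u_\infty\|_{L^2}$, $\|v_\infty\|_{L^2}^2=L$, and $E_\alpha(u_\infty)=\tau E_\alpha(v_\infty)$; since $v_\infty$ is then admissible, the inequality $D_\alpha\le \tau D_\alpha+\alpha L(1-\tau)$ forces $D_\alpha\le\alpha$ when $\tau<1$. You normalize in \emph{amplitude} rather than in scale: testing with $u_\infty/\sqrt t$, $t=\|u_\infty\|_H^2$, and invoking the elementary convexity bound $e^{\alpha s/t}-1\ge t^{-1}(e^{\alpha s}-1)$ gives $E_\alpha(u_\infty)\le tD_\alpha$, which combined with $D_\alpha\le E_\alpha(u_\infty)+\alpha(1-t)$ yields the same contradiction. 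Your route is more elementary and does not use the scale invariance of the $H^{1/2}$-seminorm (it would work verbatim for any convex nonlinearity vanishing at the origin); the paper's scaling argument, on the other hand, identifies the relevant parameter directly with the $L^2$ defect, which fits more naturally into a concentration--compactness picture and makes strong $L^2$ convergence immediate.

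One small remark: the sentence ``Fatou's lemma then forces $L=\|u_\infty\|_{L^2}^2$'' is slightly telegraphic. Fatou alone gives only $\|u_\infty\|_{L^2}^2\le L$; the reverse inequality comes from your earlier bound $L+\|(-\Delta)^{1/4}u_\infty\|_{L^2}^2\le 1$ once $\|u_\infty\|_H=1$ is known.
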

\begin{proof}
Let $(u_k)\subset H$ be a maximizing sequence for $E_\alpha$. W.l.o.g. we can assume $u_k\to u_\infty\in H$ weakly in $H$ and a.e. on $\R$. Moreover, up to replacing $u_k$ with its symmetric decreasing rearrangement, we can assume that $u_k$ is even and decreasing (see \cite{Park}). Since $\alpha\in (0,\pi)$ the sequence $e^{\alpha u_k^2} -1$ is bounded in $L^\frac{\pi}{\alpha}(\R)$, with $\frac{\pi}{\alpha}>1$. Then, by Vitali's theorem, we get $e^{\alpha u_k^2} \to e^{\alpha u_{\infty}^2} $ in $L^1_{\loc}(\R)$, and Lemma \ref{LemmaVanishing} yields 
\begin{equation}\label{EVan}
E_\alpha(u_k) = E_\alpha(u_\infty)+\alpha \bra{ \|u_k\|_{L^2(\R)}^2 - \|u_\infty\|_{L^2(\R)}^2}+o(1).
\end{equation}
This implies that $u_\infty\not\equiv 0$, since otherwise we have
$
E_\alpha(u_k)  = \alpha \|u_k\|_{L^2(\R)}^2 +o(1)  \le \alpha+o(1),
$
which contradicts the assumption $D_\alpha > \alpha$. Let us denote
$$L:=\limsup_{k\to\infty}  \|u_k\|^2_{L^2(\R)},\quad   \tau :=\frac{\|u_\infty\|^2_{L^2(\R)}}{L}.$$
Observe that $L,\tau\in (0,1]$.
Let us consider the sequence $v_k(x)= u_k(\tau x)$. Clearly, we have $v_k \rw v_\infty$ weakly in $H$, where $v_\infty(x):=u_\infty(\tau x)$. Moreover, since 
$$
\|u_\infty\|_{L^2}^2 =L, \quad \text{ and }\quad \|(-\Delta)^\frac{1}{4} v_\infty \|_{L^2}^2 \le \liminf_{k\to\infty }\|(-\Delta)^\frac{1}{4} v_k \|_{L^2}^2 = \liminf_{k\to\infty }\|(-\Delta)^\frac{1}{4} u_k \|_{L^2}^2 \le 1-L,
$$
we get $\|v_{\infty}\|_{H}\le 1$. By \eqref{EVan} we have
\begin{equation}\label{portion3}
D_\alpha \le E_\alpha(u_\infty) + \alpha L(1-\tau)   = \tau E_\alpha(v_\infty) + \alpha L(1-\tau)   \le  \tau D_\alpha+ \alpha L(1-\tau).
\end{equation}
If $\tau<1$, this implies $D_\alpha \le \alpha L \le \alpha$, contradicting the assumptions. 
Hence $\tau =1$ and \eqref{portion3} gives $D_\alpha = E_\alpha(u_\infty)$. Finally we have $\|u_0\|_{H}=1$, otherwise $E_\alpha(\frac{u_\infty}{\|u_\infty\|_H})> E_\alpha(u_\infty)=D_\alpha$.
\end{proof}

\begin{lemma}\label{sub}
There exists $\alpha^*\in (0,\pi)$ such that $D_\alpha>\alpha$ for any $\alpha\in (\alpha^*,\pi]$. In particular $D_\alpha$ is attained {by an even and decreasing function $u_\alpha$} for any $\alpha \in (\alpha^*,\pi)$ by Lemma \ref{novan}. 
\end{lemma}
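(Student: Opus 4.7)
The plan is to reduce the lemma to the single inequality $D_\pi > \pi$ and then exhibit an explicit test function that realises it.

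First, I would note that by dominated convergence---using $e^{\pi u^2}-1\in L^1(\R)$ from \eqref{stimaMTR} as the dominating integrable function---the map $\alpha\mapsto E_\alpha(u)$ is continuous on $[0,\pi]$ for each fixed $u\in H$ with $\|u\|_H\le 1$. Consequently, if one finds $u_0\in H$ with $\|u_0\|_H\le 1$ and $E_\pi(u_0)>\pi$, then $E_\alpha(u_0)>\pi>\alpha$ throughout some left neighborhood of $\pi$, giving $D_\alpha\ge E_\alpha(u_0)>\alpha$ there; the existence of even, decreasing extremals on $(\alpha^*,\pi)$ then follows immediately from Lemma \ref{novan}.

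Next, I would exhibit the explicit test function $u_0(x):=e^{-|x|}/\sqrt{1+2/\pi}$. Verifying $\|u_0\|_H=1$ amounts to two textbook computations: $\|e^{-|\cdot|}\|_{L^2(\R)}^2=1$ by direct integration, and, using the Fourier transform $\widehat{e^{-|\cdot|}}(\xi)=2/(1+\xi^2)$ with Plancherel,
$$\|(-\Delta)^{1/4}e^{-|\cdot|}\|_{L^2(\R)}^2=\frac{1}{2\pi}\int_\R \frac{4|\xi|}{(1+\xi^2)^2}\,d\xi=\frac{2}{\pi}.$$
Hence $\|u_0\|_{L^2}^2=\pi/(\pi+2)$, $\|(-\Delta)^{1/4}u_0\|_{L^2}^2=2/(\pi+2)$, so $\|u_0\|_H=1$.

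Setting $A:=\pi u_0(0)^2=\pi^2/(\pi+2)$, one has $\pi u_0^2(x)=Ae^{-2|x|}$ and a term-by-term expansion (justified by monotone convergence since all summands are nonnegative) yields
$$E_\pi(u_0)=2\int_0^\infty\sum_{n=1}^\infty \frac{A^n e^{-2nx}}{n!}\,dx=\sum_{n=1}^\infty \frac{A^n}{n\cdot n!}.$$
Truncating at three terms and using positivity of the remainder, the target inequality $E_\pi(u_0)>\pi$ reduces to $A+A^2/4+A^3/18>\pi$. Substituting $A=\pi^2/(\pi+2)$ and clearing denominators, this is equivalent to the polynomial inequality
$$2\pi^5+9\pi^4+18\pi^3-72\pi^2-288\pi-288>0,$$
which one checks by observing that the LHS is increasing on $[3,\infty)$ and already positive at $x=3.1$.

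The main obstacle is this last numerical step: the margin in the three-term bound is less than $0.1$, so a bit of care is needed with the arithmetic. If a more comfortable margin is desired, one can include a fourth (or later) term of the series---the full series sums to approximately $3.43$, giving a margin $\approx 0.29$ above $\pi$---or optimise over the one-parameter family $e^{-\lambda|x|}$; but the choice $\lambda=1$ is already sufficient, and the whole argument is clean enough to present as a short postscript to Lemma \ref{novan}.
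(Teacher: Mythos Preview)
Your argument is correct and self-contained, but it follows a genuinely different route from the paper. The paper proves Lemma~\ref{sub} by invoking Proposition~\ref{Cg2bis}, which constructs a rather intricate test function (built from the Green's function $\tilde G$ of $(-\Delta)^{1/2}+\mathrm{Id}$ glued to a rescaled blow-up profile) and yields the sharper lower bound $D_\pi>2\pi e^{-\gamma}\approx 3.53$. Since $2\pi e^{-\gamma}>\pi$, this immediately gives $D_\pi>\pi$ and hence, by continuity, $D_\alpha>\alpha$ for $\alpha$ near $\pi$. Your approach replaces that machinery by the elementary test function $u_0=e^{-|x|}/\sqrt{1+2/\pi}$ and a direct series computation, obtaining $E_\pi(u_0)\approx 3.23>\pi$. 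The trade-off is clear: your proof is short, explicit, and requires no forward reference or Green's function asymptotics, so it is a nice standalone proof of Lemma~\ref{sub}; on the other hand, it only gives $D_\pi>\pi$, whereas the paper's stronger bound $D_\pi>2\pi e^{-\gamma}$ is exactly what is needed later to close the contradiction argument in the critical case (ruling out blow-up via Proposition~4.10). Since Proposition~\ref{Cg2bis} must be proved anyway, the authors simply reuse it here.
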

\begin{proof}
This follows from Proposition \ref{Cg2bis} by continuity. Indeed  Proposition \ref{Cg2bis} gives $D_\pi > 2\pi e^{-\gamma} > \pi$. 
\end{proof}

%

\subsection{The critical case}
Next, we take a sequence $\alpha_k$ such that $\alpha_k\nearrow \pi$ as $k\to \infty$. For any  large $k$, Lemma \ref{sub} yields the existence  $u_k\in H$ {even and decreasing} such that $D_{\alpha_k}= E_{\alpha_k}(u_k)$. Each $u_k$ satisfies
$$
(-\Delta)^\frac{1}{2}u_k +u_k = \lambda_k u_k e^{\alpha_k u_k^2} ,
$$
and $\|u_k\|_H=1$. Note that $u_k\in C^\infty(\R)$ by elliptic estimates. Multiplying the equation by $u_k$ and using the basic inequality $t e^{t} \ge e^t-1$, for $t\ge 0$, we infer  
\[
\begin{split}
\frac{1}{\lambda_k} = \int_{\R} u_k^2 e^{\alpha_k u_k^2} dx    \ge \frac{1}{\alpha_k} E_{\alpha_k}(u_k) = \frac{1}{\alpha_k} D_{\alpha_k}.
\end{split}
\]
Since $D_{\alpha_k}\to D_{\pi}>0$, we get that $\lambda_k$ is uniformly bounded.

Then the sequence $u_k$ satisfies the alternative of Theorem \ref{compact}. If case (i) holds, then we can argue as in Lemma \ref{novan} and Lemma \ref{sub} and prove that $D_\pi$ is attained. Therefore, we shall assume by contradiction that case (ii) occurs.

Let $r_k$ and $\eta_k$ be as in Theorem \ref{compact}. Let $\tilde \eta_k$ denote the Poisson integral of $\eta_k$. 

\begin{prop}\label{convetaR}
We have $\tilde \eta_k\to \tilde \eta_\infty$ in $C^\ell_{\loc}(\overline{\R^2_+})$ for every $\ell\ge 0$, where
$$
\tilde \eta_\infty(x,y)=-\log\bra{(1+y)^2+x^2}
$$
is the Poisson integral (compare to \eqref{Poisson2}) of $\eta_{\infty}:=-\log\bra{1+x^2}$. 
\end{prop}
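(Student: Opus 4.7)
The plan is to mirror very closely the argument used for Proposition \ref{conveta} in Section 2, with the observation that all the ingredients needed there are now provided by Theorem \ref{compact} rather than by the ad hoc estimates of \cite{MMS}. Indeed, under the standing assumption that case (ii) of Theorem \ref{compact} occurs, that theorem already gives us $r_k\to 0$, the convergence $\eta_k\to \eta_\infty=-\log(1+x^2)$ in $C^\ell_{\loc}(\R)$ for every $\ell\ge 0$, and the crucial global control $\sup_k\|\eta_k\|_{L_s(\R)}<\infty$ for every $s>0$. These are exactly the inputs that drove the proof of Proposition \ref{conveta}, so the argument transfers almost verbatim.

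Concretely, first I would write the Poisson representation
\[
\tilde\eta_k(x,y)=\frac{1}{\pi}\int_{\R}\frac{y\,\eta_k(\xi)}{(x-\xi)^2+y^2}\,d\xi,
\]
fix $R>0$, and split the integral into an integral over $(-R,R)$ and one over $\R\setminus(-R,R)$. For $(x,y)\in \overline{B_{R/2}^+}:=\overline{B_{R/2}}\cap\overline{\R^2_+}$ the local part is controlled by the uniform convergence of $\eta_k$ on $[-R,R]$, while for the tail I would use that $|x-\xi|\ge |\xi|/2$ in the relevant range, so that
\[
\left|\int_{|\xi|>R}\frac{y\,\eta_k(\xi)}{(x-\xi)^2+y^2}\,d\xi\right|\le C(R)\int_{|\xi|>R}\frac{|\eta_k(\xi)|}{1+|\xi|^{1+2s}}\,d\xi\le C(R)\sup_k\|\eta_k\|_{L_s(\R)},
\]
for any fixed $s\in(0,1/2)$. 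Combining the two pieces shows that $\tilde\eta_k$ is locally uniformly bounded on $\overline{\R^2_+}$.

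Since each $\tilde\eta_k$ is harmonic in $\R^2_+$ with Neumann-type continuous boundary trace $\eta_k$, I would then conclude the convergence $\tilde\eta_k\to\tilde\eta_\infty$ in $C^\ell_{\loc}(\overline{\R^2_+})$ by standard interior and boundary elliptic estimates for harmonic functions, exactly as in Proposition \ref{conveta}. The identification of the limit as the Poisson integral of $\eta_\infty$ follows from the uniqueness of bounded harmonic extensions with prescribed continuous boundary data on compact sets (Proposition \ref{Uniq}), together with the explicit Poisson integral computation
\[
\frac{1}{\pi}\int_{\R}\frac{y}{(x-\xi)^2+y^2}\log(1+\xi^2)\,d\xi=\log\bigl((1+y)^2+x^2\bigr),
\]
which follows from the fact that $\log((1+y)^2+x^2)$ is harmonic in $\R^2_+$ with boundary value $\log(1+x^2)$ and grows only logarithmically at infinity.

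There is no real obstacle: the only slightly delicate point is making sure that the $L_s$ tail estimate is applied with $s$ small enough that the kernel $y/((x-\xi)^2+y^2)$ decays sufficiently in $\xi$ to be absorbed, which is immediate since $L_s(\R)$ bounds mean integrability against $(1+|\xi|^{1+2s})^{-1}$ and we are free to choose any $s\in(0,1/2)$. This is precisely the step in which the global control provided by Lemma \ref{LemmaLs} (and not merely local equicontinuity of $\eta_k$) becomes essential, since on the whole line we cannot rely on boundary conditions as in the interval case.
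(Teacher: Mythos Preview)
Your proposal is correct and follows essentially the same route as the paper: invoke Theorem~\ref{compact} for the local $C^\ell$ convergence of $\eta_k$ and the uniform $L_s$ bound, then repeat verbatim the Poisson-integral splitting and elliptic-estimate argument of Proposition~\ref{conveta}. The paper simply uses $s=\tfrac12$ (the natural choice matching the $|\xi|^{-2}$ decay of the Poisson kernel) rather than $s\in(0,\tfrac12)$, but this makes no difference.
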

\begin{proof}
By Theorem \ref{compact} we know that $\eta_k \to \eta_\infty$ in $C^\ell_{\loc}(\R)$ and that $\eta_k$ is bounded in $L_{\frac{1}{2}}$. Then, we can repeat the argument of the proof of Proposition \ref{conveta}.
\end{proof}

\begin{rmk}\label{integralsR} As in \eqref{integrals}, the convergence $\eta_k\to \eta_\infty$ in $L^\infty_{\loc}(\R)$ implies 
$$
\lim_{k\to \infty} \int_{-r_kR}^{r_kR} \lambda_k \mu_k^{i} u_k^{2-i} e^{\alpha_k u_k^2} = \frac{1}{\pi}\int_{-\pi}^\pi e^{\eta_\infty} dx,
$$
for $i=0,1,2$ and for any $R>0$.
\end{rmk}

\begin{lemma}\label{conv 0 in L2R}
We have $u_k \to 0$ in $L^2(\R)$. 
\end{lemma}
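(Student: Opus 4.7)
The plan is to show $\|u_k\|_{L^2(\R)}^2\to 0$. The starting point is that the weak limit must vanish: since $\|u_k\|_H^2=1$, Theorem \ref{compact}(ii) applied with $\Lambda=1$ yields $\|u_\infty\|_H^2+1\le 1$, forcing $u_\infty\equiv 0$. In particular $u_k\rightharpoonup 0$ weakly in $H$ and in $L^2(\R)$.

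I would split
\[
\|u_k\|_{L^2(\R)}^2=\int_{|x|<Rr_k}u_k^2\,dx+\int_{Rr_k\le|x|\le A}u_k^2\,dx+\int_{|x|>A}u_k^2\,dx,
\]
and let $k\to\infty$, $R\to\infty$, $A\to\infty$ in that order. The bubble contribution is at most $2Rr_k\mu_k^2\to 0$ for fixed $R$, because $r_k\mu_k^2\to 0$ as shown in the proof of Lemma \ref{localconv}. The intermediate piece is handled by the compact Sobolev embedding $H^{\frac{1}{2},2}((-A,A))\hookrightarrow L^2((-A,A))$ (Rellich-Kondrachov), which upgrades the weak convergence $u_k\rightharpoonup 0$ to strong $L^2$-convergence on bounded intervals, giving $\int_{-A}^A u_k^2\,dx\to 0$ for each fixed $A$.

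The tail is the delicate step. Using evenness and monotonicity, $u_k(x)\le u_k(A)$ on $|x|\ge A$, and Theorem \ref{compact}(ii) gives $u_k(A)\to 0$ as $k\to\infty$. Therefore
\[
\int_{|x|\ge A}u_k^2\,dx\le u_k(A)\int_{|x|\ge A}u_k\,dx\le u_k(A)\,\|u_k\|_{L^1(\R)},
\]
so the matter reduces to the uniform bound $\sup_k\|u_k\|_{L^1(\R)}<\infty$. I would derive this from the Bessel-kernel representation $u_k=G_{1,1}\ast f_k$, where $G_{1,1}\ge 0$ is the fundamental solution of $(-\Delta)^{1/2}+1$ on $\R$ and $f_k:=\lambda_ku_ke^{\alpha_ku_k^2}\ge 0$. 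Since $\widehat{G_{1,1}}(0)=1$ and $G_{1,1}\ge 0$, $\|G_{1,1}\|_{L^1(\R)}=1$, and by Young's inequality $\|u_k\|_{L^1(\R)}\le\|f_k\|_{L^1(\R)}$. Splitting $\|f_k\|_{L^1}$ at radius $Rr_k$, the bubble part is $\lambda_k\int_{|x|<Rr_k}u_ke^{\alpha_ku_k^2}\,dx\sim\pi/(\alpha_k\mu_k)\to 0$ by Remark \ref{integralsR}; the exterior part is controlled by the pointwise smallness of $u_k$ off the concentration point together with the analogue of Lemma \ref{leconvgreen} (the Green's-function profile for $(-\Delta)^{1/2}+1$ on $\R$).

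The main obstacle is precisely the uniform $L^1(\R)$-bound: in one dimension $H^{\frac{1}{2},2}(\R)\not\hookrightarrow L^1(\R)$, and the monotonicity-based pointwise bound $u_k(x)^2\le \|u_k\|_{L^2}^2/(2|x|)\le 1/(2|x|)$ is just barely non-integrable at infinity. The equation must be used essentially, via the $|x|^{-2}$-decay of the Bessel kernel $G_{1,1}$, to bootstrap the decay $u_k(x)\lesssim|x|^{-1/2}$ into an integrable one.
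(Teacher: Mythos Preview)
Your first step---using Theorem \ref{compact}(ii) with $\Lambda=1$ to force $u_\infty\equiv 0$, and then upgrading to strong $L^2_{\loc}$-convergence via Rellich---is correct. The difficulty is entirely in the tail, and here your argument does not close.

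You reduce the tail to a uniform bound on $\|u_k\|_{L^1(\R)}$, which you propose to obtain from $\|u_k\|_{L^1}\le\|f_k\|_{L^1}$ with $f_k=\lambda_k u_k e^{\alpha_k u_k^2}$. But the control of $\|f_k\|_{L^1}$ on the ``exterior'' $\{|x|>Rr_k\}$ is not there. Pointwise smallness of $u_k$ is only available on $\{|x|\ge\delta\}$ for fixed $\delta>0$; on the neck $\{Rr_k<|x|<\delta\}$ the function $u_k$ ranges from $\mu_k+o(1)$ down to $o(1)$ and is \emph{not} small. The usual way to tame this region is via the truncation $u_k^A$ (Lemma \ref{lemma1A su R}), whose proof already uses the present lemma. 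Your appeal to ``the analogue of Lemma \ref{leconvgreen}'' is likewise circular: the $\R$-analogue is Lemma \ref{L2conv}, and its proof (through Lemma \ref{convdelta} and Proposition \ref{stimaalto su R}) rests on Lemma \ref{conv 0 in L2R}. Finally, on the far tail $\{|x|\ge\delta\}$ you would still need $\int_{|x|\ge\delta} u_k\,dx$ bounded, which is exactly the missing $L^1$-bound again.

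The paper avoids all of this by a short contradiction argument that never touches tails. If $\|u_k\|_{L^2}\not\to 0$, then $\|\ql u_k\|_{L^2}^2\le 1/A^2$ along a subsequence for some $A>1$; truncating $v_k=(u_k-u_k(1))^+\in\tilde H^{\frac12,2}(I)$ (contraction keeps the seminorm $\le 1/A$) gives, via \eqref{stimaMT}, that $e^{\alpha_k u_k^2}$ is bounded in $L^p_{\loc}$ for some $p>1$, hence $e^{\alpha_k u_k^2}\to 1$ in $L^1_{\loc}$. Then Lemma \ref{LemmaVanishing} yields $D_{\alpha_k}=E_{\alpha_k}(u_k)\le \pi+o(1)$, contradicting $D_\pi>2\pi e^{-\gamma}>\pi$ from Lemma \ref{sub}. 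The key point is that subcriticality of the seminorm, not decay at infinity, is what does the work.
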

\begin{proof}
Indeed, otherwise up to a subsequence we would have $\|(-\Delta)^\frac{1}{4} u_k\|_{L^2(\R)} \le \frac{1}{A}$ for some $A>1$. Consider, the function $v_k = (u_k-u_k(1))^+$. Then, $v_k\in \tilde H^{\frac{1}{2},2}(I)$ and $\|(-\Delta)^\frac{1}{4} v_k\|_{L^2(\R)}\le \frac{1}{A}$. The Moser-Trudinger inequality \eqref{stimaMT}  gives that $e^{\alpha_kv_k^2}$ is bounded in $L^A(\R)$. Since 
$$
u_k^2 \le (1+\eps)v_k^2 + \frac{1}{\eps} (u_k-v_k)^2
$$ 
and $|v_k-u_k|\le u_k(1)\to 0$ as $k\to \infty$, we get that $e^{\alpha_k u_k^2}$ is uniformly bounded in $L^p(\R)$ for every $1<p<A$. Therefore, we have 
$$
\int_{(-1,1)} (e^{\alpha_k u_k^2}-1) dx \to 0
$$
as $k\to \infty$. But then, by Lemma \ref{LemmaVanishing} we find  $D^\pi_g  \le \pi$, which contradicts Lemma \ref{sub}.
\end{proof}

\begin{lemma}\label{lemma1A su R}
For $A>1$, set  $u_k^A:=\min\left\{u_k,\frac{\mu_k}{A}\right\}$. Then we have
\begin{equation}
\limsup_{k\to \infty}\|\ql u_k^A\|_{L^2(\R)}^2 \le \frac{1}{A}.
\end{equation}
\end{lemma}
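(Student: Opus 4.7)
The plan is to mimic the proof of Lemma \ref{lemma1A} given earlier for the interval case, accounting for the two new features on $\R$: the Euler–Lagrange equation carries the additional linear term $-u_k$, and the test function $u_k^A$ need no longer vanish away from a bounded set. The crucial observation that makes both issues harmless is Lemma \ref{conv 0 in L2R}, which provides $\|u_k\|_{L^2(\R)}\to 0$.

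First I would let $\tilde u_k$ be the harmonic (Poisson) extension of $u_k$ to $\R^2_+$ and set $\bar u_k^A:=\min\{\tilde u_k,\mu_k/A\}$, which is an admissible (not necessarily harmonic) extension of $u_k^A$. Then
\[
\|\ql u_k^A\|_{L^2(\R)}^2\le \int_{\R^2_+}|\nabla \bar u_k^A|^2 dxdy,
\]
and the same integration by parts / harmonicity computation as in \eqref{toprove2} gives
\[
\int_{\R^2_+}|\nabla \bar u_k^A|^2 dxdy=\int_{\R} u_k^A\hl u_k\, dx=\int_{\R}\lambda_k u_k u_k^A e^{\alpha_k u_k^2}\, dx-\int_{\R}u_k u_k^A\, dx,
\]
where I used the Euler–Lagrange equation for $u_k$ on $\R$. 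By Lemma \ref{conv 0 in L2R} the last term is $O(\|u_k\|_{L^2(\R)}^2)=o(1)$, so the whole analysis reduces to showing
\[
\lim_{k\to\infty}\int_{\R}\lambda_k u_k u_k^A e^{\alpha_k u_k^2}\, dx=\frac{1}{A}.
\]

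For this, I would argue exactly as in Lemma \ref{lemma1A}: by Proposition \ref{convetaR} we have $u_k^A(r_k x)=\mu_k/A$ for $|x|\le R$ and $k$ large; together with Remark \ref{integralsR} and $\int_\R e^{\eta_\infty}dx=\pi$ this yields
\[
\liminf_{k\to\infty}\int_\R \lambda_k u_k u_k^A e^{\alpha_k u_k^2}dx\ge \frac{1}{A},\qquad \liminf_{k\to\infty}\int_\R \lambda_k u_k v_k^A e^{\alpha_k u_k^2}dx\ge \frac{A-1}{A},
\]
where $v_k^A:=(u_k-\mu_k/A)^+ =u_k-u_k^A$. The two limits add up to
\[
\int_\R \lambda_k u_k^2 e^{\alpha_k u_k^2}dx=\|u_k\|_H^2=1
\]
(obtained by testing the equation with $u_k$), so both inequalities must be equalities. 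Feeding this back into the chain of inequalities for $\|\ql u_k^A\|_{L^2(\R)}^2$ yields the desired bound $1/A$.

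The only genuinely new point compared to the interval proof is the appearance of $\int_\R u_k u_k^A dx$, but Lemma \ref{conv 0 in L2R} disposes of it; everything else is a direct transcription. I do not expect a serious obstacle, since the concentration profile on $\R$ is identical to the one on $I$ (Proposition \ref{convetaR}) and the identity $\int_\R\lambda_k u_k^2 e^{\alpha_k u_k^2}dx=1$ still holds thanks to the normalization $\|u_k\|_H=1$.
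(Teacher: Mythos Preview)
Your proposal is correct and follows essentially the same route as the paper's proof: both use the truncated Poisson extension $\bar u_k^A$, the identity $\int_{\R^2_+}|\nabla \bar u_k^A|^2=\int_\R u_k^A\hl u_k$, and the splitting $u_k=u_k^A+v_k^A$ together with Remark~\ref{integralsR} and Lemma~\ref{conv 0 in L2R} to pin down the limit as $1/A$. The only cosmetic difference is that you substitute the Euler--Lagrange equation before summing (so the constraint reads $\int_\R \lambda_k u_k^2 e^{\alpha_k u_k^2}=1$), whereas the paper sums first and uses $\int_\R u_k\hl u_k=1-\|u_k\|_{L^2}^2=1+o(1)$; these are equivalent rearrangements.
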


\begin{proof}
The proof is similar to the one of Lemma \ref{lemma1A}. We set $\bar u_k^A:=\min\left\{\tilde u_k,\frac{\mu_k}{A}\right\}$. Since $\bar u_{k}^A$ is an extension of $u_k^A$, using integration by parts and the harmonicity of $\tilde u_k$ we get
\begin{equation}\label{toprove2R}
\begin{split}
\|\ql u_k^A\|_{L^2(\R)}^2  \le \int_{\R^2_+}|\nabla \bar u_k^A|^2dx dy=\int_{\R^2_+}\nabla \bar u_k^A \cdot \nabla \tilde u_k dxdy &= - \int_{\R} u_k^A(x) \frac{\de \tilde u(x,0)}{\de y}dx\\
&=\int_{\R{}}(-\Delta)^{\frac 12} u_k u_k^Adx.
\end{split}
\end{equation}

Proposition \ref{convetaR} implies that $u_k^A(r_kx)=\frac{\mu_k}{A}$ for $|x|\le R$ and $k\ge k_0(R)$. Noting that $u_k^A\le u_k$ and using Lemma \ref{conv 0 in L2R}, and Remark \ref{integralsR}, we get
\[
\begin{split}
\int_{\R{}}(-\Delta)^{\frac 12} u_k u_k^A\, dx&\geq \int_{-Rr_k}^{Rr_k} \lambda_k u_k e^{\alpha_k u_k^2}u_k^A dx - \int_{\R} u_k u_k^A dx \\&
=\frac{1}{A}\int_{-Rr_k}^{Rr_k} \lambda_k \mu_k u_k e^{\alpha_k u_k^2}u_k^A dx + O(\|u_k\|_{L^2(\R)}^2)
\\&
\stackrel{k\to\infty}{\to} \frac{1}{\pi A}\int_{-R}^R e^{\eta_\infty} d\xi
\\&\stackrel{R\to\infty}{\to} \frac{1}{A}.
\end{split}
\]

Set now $v_k^A:=\left(u_k-\frac{\mu_k}{A}\right)^+$. With  similar computations we get
\[
\begin{split}
\int_{\R{}}(-\Delta)^{\frac 12} u_k v_k^A dx&\geq \int_{-Rr_k}^{Rr_k} \lambda_k u_k  v_k^A e^{\alpha_k u_k^2} dx + O(\|u_k\|_{L^2(\R)}^2)
\\ &\stackrel{k\to\infty}{\to}\frac{1}{\pi}\bra{1-\frac{1}{A}} \int_{-R}^R e^{\eta_\infty}d\xi\\
& \stackrel{R\to\infty}{\to} \frac{A-1}{A}.
\end{split}
\]
Since
$$
\int_{\R{}}(-\Delta)^{\frac 12} u_k u_k^A\, dx+ \int_{\R{}}(-\Delta)^{\frac 12} u_k v_k^A dx=\int_{\R}(-\Delta)^{\frac 12} u_k u_k dx = 1  - \|u_k\|_{L^2(\R)}^2 = 1+o(1),$$
we get that
\[
\lim_{n\to \infty} \int_{\R{}}(-\Delta)^{\frac 12} u_k u_k^A\, dx = \frac{1}{A}.
\] 
Then, we conclude using \eqref{toprove2R}.
\end{proof}

\begin{prop}\label{stimaalto su R} We have
\begin{equation}\label{Cglim su R}
D_\pi= \lim_{k\to\infty}\frac{1}{\lambda_k\mu_k^2}.
\end{equation}
Moreover
\begin{equation}\label{muklambdak su R}
\lim_{k\to\infty} \mu_k \lambda_k =0.
\end{equation}
\end{prop}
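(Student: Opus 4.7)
\textbf{Proof plan for Proposition \ref{stimaalto su R}.} The plan is to follow the blueprint of Proposition \ref{stimaalto}, replacing the ingredients suited to the interval by their real-line analogues: \eqref{stimaMTR} in place of \eqref{stimaMT}, Lemma \ref{lemma1A su R} in place of Lemma \ref{lemma1A}, Remark \ref{integralsR} in place of \eqref{integrals}, and using Lemma \ref{conv 0 in L2R} to handle the difference between the full $H$-norm and the homogeneous seminorm $\|(-\Delta)^{\frac14}\cdot\|_{L^2}$.

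\emph{Upper bound.} Fix $A>1$ and set $u_k^A=\min\{u_k,\mu_k/A\}$, $v_k^A=(u_k-\mu_k/A)^+$. Split
\[
D_{\alpha_k}=E_{\alpha_k}(u_k)=\int_{\{u_k\le \mu_k/A\}}(e^{\alpha_k u_k^2}-1)\,dx+\int_{\{u_k>\mu_k/A\}}(e^{\alpha_k u_k^2}-1)\,dx=:(I)+(II).
\]
By Lemma \ref{lemma1A su R} and Lemma \ref{conv 0 in L2R}, $\|u_k^A\|_H^2\le \frac1A+o(1)<1$ for $k$ large; applying \eqref{stimaMTR} to $u_k^A/\|u_k^A\|_H$ and using $(e^t-1)^q\le e^{qt}-1$ for $t,q\ge 1$, one gets that $e^{\alpha_k (u_k^A)^2}-1$ is bounded in $L^q(\R)$ for every $q\in(1,A)$. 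Combined with the a.e.\ convergence $u_k\to 0$ (from Lemma \ref{conv 0 in L2R}) and with the tail bound $u_k^2(x)\le \frac{1}{2|x|}$ coming from the even/decreasing symmetry (which, via $e^t-1\le C t$ for $t\in[0,1]$, gives uniform tightness at infinity), Vitali's theorem yields $(I)\to 0$. On the other hand, on $\{u_k>\mu_k/A\}$ one has $1<A^2 u_k^2/\mu_k^2$, so, using the energy identity $\int_\R\lambda_k u_k^2 e^{\alpha_k u_k^2}dx=\|u_k\|_H^2=1$ and Lemma \ref{conv 0 in L2R},
\[
(II)\le \frac{A^2}{\lambda_k\mu_k^2}\int_\R \lambda_k u_k^2(e^{\alpha_k u_k^2}-1)\,dx=\frac{A^2}{\lambda_k\mu_k^2}\bigl(1-\lambda_k\|u_k\|_{L^2(\R)}^2\bigr)=\frac{A^2}{\lambda_k\mu_k^2}(1+o(1)).
\]
Letting $k\to\infty$ and then $A\downarrow 1$, and recalling $D_{\alpha_k}\to D_\pi$, we get $D_\pi\le \liminf_{k\to\infty}\frac{1}{\lambda_k\mu_k^2}$.

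\emph{Lower bound.} Using the change of variables $x=r_k\xi$, the definition \eqref{defmukrk}, Proposition \ref{convetaR} and Remark \ref{integralsR},
\[
E_{\alpha_k}(u_k)\ge \int_{-Rr_k}^{Rr_k}e^{\alpha_k u_k^2}dx-2Rr_k=\frac{1}{\alpha_k \lambda_k \mu_k^2}\int_{-R}^R e^{\eta_k+\frac{\eta_k^2}{4\alpha_k\mu_k^2}}\,d\xi -2Rr_k.
\]
Since $r_k\to 0$ and the integrand converges locally uniformly to $e^{\eta_\infty}$, letting $k\to\infty$ and then $R\to\infty$ and recalling $\int_\R e^{\eta_\infty}dx=\pi$ from \eqref{propeta}, we obtain $D_\pi\ge \limsup_{k\to\infty}\frac{1}{\lambda_k\mu_k^2}$. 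This proves \eqref{Cglim su R}. Finally, \eqref{muklambdak su R} is immediate: $\lambda_k\mu_k^2\to D_\pi^{-1}\in(0,\infty)$ (nonzero since $D_\pi<\infty$) and $\mu_k\to\infty$ force $\mu_k\lambda_k=(\lambda_k\mu_k^2)/\mu_k\to 0$.

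The only point that requires genuine care, beyond a direct transcription of the argument in Proposition \ref{stimaalto}, is the control of $(I)$: on the unbounded domain $\R$ an $L^q$-bound no longer implies equi-integrability, so one has to combine the Moser--Trudinger $L^q$-bound on compact sets with the tail decay obtained from the symmetric-decreasing property together with $\|u_k\|_{L^2(\R)}\to 0$.
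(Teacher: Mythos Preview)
Your proof is correct and follows essentially the same route as the paper. The only organizational difference is that the paper splits the ``small'' region $\{u_k\le \mu_k/A\}$ further into $(-1,1)$ and its complement (handling the compact part via the $L^p$-bound plus Vitali and the tail via the decay \eqref{decay} and Lemma \ref{conv 0 in L2R}), whereas you keep it as a single piece and invoke Vitali on $\R$ with an explicit tightness argument; these are the same idea, and you correctly identify that the tail control is the one new point compared to Proposition \ref{stimaalto}.
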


\begin{proof} Fix $A>1$ and write
\[\begin{split}
\int_{\R}\bra{e^{\alpha_k u_k^2}-1}dx&= \int_{\{u_k\le\frac{\mu_k}{A}\}\cap (-1,1)}\bra{e^{\alpha_k u_k^2}-1}dx   + \int_{\{u_k\le\frac{\mu_k}{A}\}\cap (-1,1)^c} \bra{e^{\alpha_k u_k^2}-1}dx \\ &\quad +\int_{\{u_k>\frac{\mu_k}{A}\}} \bra{e^{\alpha_k u_k^2}-1}dx\\
&=:(I)+(II)+ (III). 
\end{split}\]
Using Lemmas \ref{L2conv} and \ref{lemma1A su R} together with Theorem \ref{MT3} we see that
$$(I)\le \int_{-1}^{1}\bra{e^{\alpha_k (u_k^A)^2}-1}\to 0\quad \text{as }k\to\infty$$
since $e^{\alpha_k (u_k^A)^2}-1$ is uniformly bounded in $L^p$, for any $1\le p<A$. By \eqref{decay} and Lemma \ref{conv 0 in L2R}, we find 
$$
(II)\le  \int_{(-1,1)^c} \bra{e^{\alpha_k u_k^2}-1} dx \le C \int_{\R} u_k^2 dx  \to 0\quad \text{ as } k\to \infty.
$$
We now estimate\[\begin{split}
(III)&\le \frac{A^2}{\lambda_k \mu_k^2} \int_{\{u_k>\frac{\mu_k}{A}\}}\lambda_k u_k^2 e^{\alpha_k u_k^2}dx \le \frac{A^2}{\lambda_k \mu_k^2} (1+o(1)),
\end{split}\]
with $o(1)\to 0$ as $k\to\infty$, where we used that
\[\begin{split}
\int_{I\cap\{u_k>\frac{\mu_k}{A}\}} \lambda_k u_k^2 e^{\alpha_k u_k^2} dx &\le \|u_k\|_H^2=1.
\end{split}\]

Letting $A\downarrow 1$, this gives
$$\sup_{H} E_\pi \le \lim_{k\to\infty}\frac{1}{\lambda_k\mu_k^2}.$$
The converse inequality follows from Remark \ref{integralsR}:
\[\begin{split}
\int_{\R}\bra{e^{\alpha_k u_k^2}-1}dx&\ge \int_{-Rr_k}^{Rr_k}  e^{\alpha_k u_k^2}dx +o(1)=\frac{1}{\lambda_k\mu_k^2} \left( \int_{-R}^R e^{\eta_\infty}dx +o(1) \right)+o(1).
\end{split}\]
with $o(1)\to 0$ as $k\to\infty$. Letting $R\to\infty$  we obtain \eqref{Cglim su R}.

Finally, \eqref{muklambdak su R} follows at once from \eqref{Cglim su R}, because otherwise we would have $D_\pi=0$, which is clearly impossible.
\end{proof}

\begin{lemma}\label{convdelta} We have
$$f_k:=\lambda_k \mu_k u_k e^{\alpha_k u_k^2}\rightharpoonup \delta_0$$
as $k\to \infty$, in the sense of Radon measures in $\R$.
\end{lemma}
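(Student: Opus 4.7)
The proof parallels that of Proposition \ref{convdelta0}, with adaptations for the non-compact setting. It suffices to show $\int_\R f_k \varphi\,dx \to \varphi(0)$ for every $\varphi \in C_c(\R)$. Fix such $\varphi$, a large $R > 0$, and $A > 1$, and split
\[
\int_\R f_k \varphi\,dx = \underbrace{\int_{-Rr_k}^{Rr_k} f_k \varphi\,dx}_{I_1} + \underbrace{\int_{\{u_k>\mu_k/A\}\setminus(-Rr_k,Rr_k)} f_k \varphi\,dx}_{I_2} + \underbrace{\int_{\{u_k\le \mu_k/A\}} f_k \varphi\,dx}_{I_3}.
\]

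For $I_1$, since $r_k \to 0$ the continuity of $\varphi$ gives $\varphi = \varphi(0) + o(1)$ uniformly on $(-Rr_k, Rr_k)$, and Remark \ref{integralsR} with $i=1$ together with $\int_\R e^{\eta_\infty}\,d\xi = \pi$ yields $I_1 \to \varphi(0)$ as $k\to\infty$ and then $R\to\infty$. For $I_2$, on $\{u_k > \mu_k/A\}$ one has $\mu_k u_k \le A u_k^2$, so
\[
|I_2| \le A\|\varphi\|_\infty \int_{\R\setminus(-Rr_k,Rr_k)} \lambda_k u_k^2 e^{\alpha_k u_k^2}\,dx.
\]
Testing \eqref{eqR} against $u_k$ yields $\int_\R \lambda_k u_k^2 e^{\alpha_k u_k^2}\,dx = \|u_k\|_H^2 = 1$, and by Remark \ref{integralsR} with $i=0$ the complementary bubble integral tends to $1$ as $k,R\to\infty$, so $I_2 \to 0$.

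The new step is $I_3$. On $\{u_k \le \mu_k/A\}$ we have $u_k = u_k^A$, so
\[
|I_3| \le \lambda_k\mu_k \|\varphi\|_\infty \int_{\mathrm{supp}\,\varphi} u_k^A e^{\alpha_k (u_k^A)^2}\,dx.
\]
Combining Lemma \ref{lemma1A su R} with the $L^2$-vanishing from Lemma \ref{conv 0 in L2R} gives $\limsup_k \|u_k^A\|_H^2 \le 1/A$, and for $A$ large, normalizing $u_k^A$ and applying the Moser-Trudinger inequality \eqref{stimaMTR} shows that $e^{\alpha_k (u_k^A)^2}$ is uniformly bounded in $L^p(\R)$ for some $p > 1$. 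Together with the $L^2$-bound on $u_k^A$ and the compactness of $\mathrm{supp}\,\varphi$, H\"older's inequality yields a uniform bound on the remaining integral; since $\lambda_k \mu_k \to 0$ by \eqref{muklambdak su R}, we conclude $I_3 \to 0$.

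The main point of departure from Proposition \ref{convdelta0} is the treatment of $I_3$: on the bounded interval, the truncation $u_k^A$ automatically controls the relevant norm via the standard Moser-Trudinger inequality \eqref{stimaMT}, whereas on $\R$ the Ruf-type inequality \eqref{stimaMTR} requires control of the full $H$-norm, so both Lemma \ref{lemma1A su R} and the $L^2$-vanishing of Lemma \ref{conv 0 in L2R} enter decisively.
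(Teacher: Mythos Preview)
Your proof is correct and follows essentially the same approach as the paper, which simply states that the argument of Proposition \ref{convdelta0} carries over with \eqref{stimaMTR}, Proposition \ref{convetaR}, Remark \ref{integralsR}, Lemma \ref{conv 0 in L2R} and Lemma \ref{lemma1A su R} replacing their counterparts on $I$. Your added detail on the $I_3$ step---combining the gradient bound from Lemma \ref{lemma1A su R} with the $L^2$-vanishing of Lemma \ref{conv 0 in L2R} to control the full $H$-norm of $u_k^A$ and then invoke \eqref{stimaMTR}---is exactly the adaptation the paper has in mind (note that any fixed $A>1$ already suffices, not only $A$ large).
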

\begin{proof}
The proof follows step by step the one Proposition  \ref{convdelta0}, with \eqref{stimaMTR}, Proposition \ref{convetaR}, Remark \ref{integralsR} Lemma \ref{conv 0 in L2R} and Lemma \ref{lemma1A su R} used in place of  \eqref{stimaMT}, Proposition \ref{conveta},  \eqref{integrals}, Lemma \ref{uinfty0} and Lemma \ref{lemma1A}. We omit the details.
\end{proof}

%

For $x\in \R$, let $G_x$ be the Green function of $\hl+ Id$ on $\R$ with singularity at $x$. In the following we denote $G:= G_0$.   By translation invariance, we get $G_x(y)= G(y-x)$ for any $x,y\in \R$, $x\neq y$. Moreover, the inversion formula for the Fourier-transform implies that
\begin{equation}\label{formula GreenR}
G(x)= \frac{1}{2} \sin |x| - \frac{1}{\pi} \sin( |x|) \mathrm{Si}(|x|) - \frac{1}{\pi} \cos (|x|)  \mathrm{Ci} (|x|), 
\end{equation}
where 
$$
\mathrm{Si}(x)= \int_{0}^x \frac{\sin t}{t} dt \qquad \text{ and }\qquad \mathrm{Ci}(x)= -\int_x^{+\infty} \frac{\cos t}{t} dt.
$$
We recall that the identity
\begin{equation}\label{CiIdentity}
\mathrm{Ci}(x) = \log x +\gamma +\int_{0}^x \frac{\cos t -1}{t} dt
\end{equation}
holds for any $x\in \R\setminus \{0\}$, where $\gamma$ denotes the Euler-Mascheroni constant see e.g. \cite[Chapter 12.2]{Gamma}.

\begin{prop}\label{PropGR} The function $G$ satisfies the following properties.
\begin{enumerate} 
\item We have $G\in C^\infty(\R\setminus \{0\})$ and 
\begin{equation}\label{Asym G}
G(x)=-\frac{1}{\pi} \log |x| -\frac{\gamma}{\pi} + O(|x|), \quad G'(x)= -\frac{1}{\pi x} +O(1), \quad {\text{as }x\to 0.}
\end{equation}
\item We have $G(x)= O(|x|^{-2})$ and $G'(x)=O(|x|^{-3})$ as $|x|\to \infty$.
\item Let $\tilde G$ be the Poisson extension of $G$. There exists a function $f\in C^{1}(\ov{ \R^{2}_+})$ such that $f(0,0)=0$ and 
\begin{equation}\label{ExpGtildeR}
\tilde G(x,y)=-\frac{1}{\pi} \ln{|(x,y)|}-\frac{\gamma}{\pi} + \frac{x}{\pi} \arctan \frac{x}{y} - \frac{y}{2\pi} \log(x^2+y^2) + f(x,y) \quad \text{in } \R^2_+.
\end{equation}
\end{enumerate}
\end{prop}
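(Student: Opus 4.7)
The plan is to derive all three statements from the explicit formula \eqref{formula GreenR}, the identity \eqref{CiIdentity}, and the classical asymptotics of $\mathrm{Si}$ and $\mathrm{Ci}$ at the origin and at infinity.

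For part (1), I would substitute \eqref{CiIdentity} into \eqref{formula GreenR} and Taylor expand. Since $\sin|x|,\,\mathrm{Si}|x| = |x| + O(|x|^3)$, $\cos|x| = 1 + O(|x|^2)$, and $\mathrm{Ci}|x| = \log|x| + \gamma + O(|x|^2)$, the only unbounded contribution to $G$ comes from $-\tfrac{1}{\pi}\cos|x|\,\mathrm{Ci}|x| = -\tfrac{1}{\pi}\log|x| - \tfrac{\gamma}{\pi} + O(|x|^2\log|x|)$, while the other two terms of \eqref{formula GreenR} produce the $O(|x|)$ error. Differentiating and using $\mathrm{Si}'(s)=\tfrac{\sin s}{s}$, $\mathrm{Ci}'(s)=\tfrac{\cos s}{s}$ yields the analogous expansion for $G'$. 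For part (2), I would insert the classical expansions $\mathrm{Si}(x) = \tfrac{\pi}{2} - \tfrac{\cos x}{x} - \tfrac{\sin x}{x^2} + O(x^{-3})$ and $\mathrm{Ci}(x) = \tfrac{\sin x}{x} - \tfrac{\cos x}{x^2} + O(x^{-3})$, valid as $x\to+\infty$, into \eqref{formula GreenR}. The leading $\tfrac{1}{2}\sin x$ cancels with $-\tfrac{1}{\pi}\sin x \cdot \tfrac{\pi}{2}$, the two $\tfrac{\sin x\cos x}{\pi x}$ contributions cancel between the $\mathrm{Si}$ and $\mathrm{Ci}$ pieces, and what remains sums to $\tfrac{\sin^2 x+\cos^2 x}{\pi x^2} + O(x^{-3}) = O(x^{-2})$. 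The same cancellation structure, carried one order further, gives $G'(x) = O(x^{-3})$.

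For part (3), the key point is that every explicit function in \eqref{ExpGtildeR} is harmonic in $\R^2_+$: $-\tfrac{1}{\pi}\log|(x,y)|$ is the real part of $-\tfrac{1}{\pi}\log z$ (the Poisson extension of $-\tfrac{1}{\pi}\log|x|$), while a direct computation with $z=x+iy$, using $\arg z = \tfrac{\pi}{2}-\arctan\tfrac{x}{y}$ for $y>0$, yields
\begin{equation*}
W(x,y) := \frac{x}{\pi}\arctan\frac{x}{y} - \frac{y}{2\pi}\log(x^2+y^2) = \frac{x}{2} - \frac{1}{\pi}\mathrm{Im}(z\log z),
\end{equation*}
so that $W$ is harmonic with boundary trace $|x|/2$ (using $\arctan(x/y)\to \tfrac{\pi}{2}\mathrm{sgn}(x)$ as $y\to 0^+$). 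Setting
\begin{equation*}
f(x,y) := \tilde G(x,y) + \tfrac{1}{\pi}\log|(x,y)| + \tfrac{\gamma}{\pi} - W(x,y),
\end{equation*}
one obtains a harmonic function in $\R^2_+$, smooth on $\ov{\R^2_+}\setminus\{(0,0)\}$ (since the explicit terms are singular only at $(0,0)$ and $\tilde G$ is smooth there), with trace $f(x,0) = G(x)+\tfrac{1}{\pi}\log|x|+\tfrac{\gamma}{\pi}-\tfrac{|x|}{2}$. Carrying the expansion of part (1) one order further gives $f(x,0) = O(|x|^2\log|x|)$ as $x\to 0$, so the trace is $C^{1,\alpha}_{\mathrm{loc}}$ near the origin (for every $\alpha<1$) with $f(0,0)=\partial_x f(0,0)=0$.

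The main obstacle is then transferring this $C^{1,\alpha}$ regularity of the trace to $C^1$ regularity of the harmonic extension $f$ up to $(0,0)$. I would localize: write $f(\cdot,0) = \chi f(\cdot,0) + (1-\chi) f(\cdot,0)$ with $\chi$ a smooth cutoff supported near $0$. The Poisson integral of the first piece is $C^{1,\alpha}$ up to the boundary by standard boundary Schauder estimates for the Dirichlet problem $\Delta u=0$ in $\R^2_+$, while the Poisson integral of the second is harmonic across $\{y=0\}$ in a neighborhood of $(0,0)$ (its trace vanishes there) and hence smooth. Combined with the smoothness of $f$ away from the origin, this yields $f\in C^1(\ov{\R^2_+})$ with $f(0,0)=0$, completing the proof.
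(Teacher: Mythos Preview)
Your treatment of parts 1 and 2 is correct and matches the paper's argument exactly: both rely on the explicit formula \eqref{formula GreenR}, the identity \eqref{CiIdentity} near the origin, and the standard large-$x$ asymptotics of $\mathrm{Si}$ and $\mathrm{Ci}$.

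For part 3 there is a genuine gap. Your localization step writes $f(\cdot,0)=\chi\phi+(1-\chi)\phi$ with $\phi(x)=G(x)+\tfrac{1}{\pi}\log|x|+\tfrac{\gamma}{\pi}-\tfrac{|x|}{2}$ and then speaks of the Poisson integral of each piece. But $\phi(x)\sim -\tfrac{|x|}{2}$ as $|x|\to\infty$, so $(1-\chi)\phi\notin L_{\frac12}(\R)$ and its Poisson integral \eqref{Poisson2} diverges. Consequently you cannot identify $f$ with the sum of the two Poisson integrals, and the sentence ``the Poisson integral of the second is harmonic across $\{y=0\}$'' has no meaning as written. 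The underlying reason is that $W$, while harmonic with trace $|x|/2$, is \emph{not} the Poisson extension of $|x|/2$ (which does not exist), so $f=\tilde G-\tilde g_0-W$ is not a priori the Poisson extension of its own boundary trace.

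The paper circumvents this by placing the cutoff one level earlier: it writes $G=g_0+g_1+g_2$ with $g_1=\tfrac12|\cdot|\psi$ \emph{compactly supported}. Then $\tilde g_1$ is a genuine Poisson integral, and $h:=\tilde g_1-W$ is harmonic with zero trace on $(-R,R)$, hence smooth near the origin by reflection (Theorem C in \cite{Rudin}). The remainder $g_2=G-g_0-g_1$ now grows only logarithmically, lies in $L_{\frac12}(\R)$, and is $C^{1,\alpha}$ on $\R$, so Schauder gives $\tilde g_2\in C^{1,\alpha}(\ov{\R^2_+})$. Your argument is easily repaired along the same lines, or alternatively by working directly with $f-\widetilde{\chi\phi}$: this difference is harmonic with trace vanishing near $0$, so reflection applies --- but you must first check that $f$ extends continuously to $(0,0)$, which follows since $\tilde G-\tilde g_0$ is the Poisson extension of the continuous $L_{\frac12}$ function $G-g_0$ and $W(x,y)\to 0$ as $(x,y)\to(0,0)$.
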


\begin{proof}
Property \emph{1.} follows directly by formula \eqref{formula GreenR} and the identity in \eqref{CiIdentity}. Similarly, since 
\[
\begin{split}
Si(t) = \frac{\pi}{2}-\frac{\cos t}{t} -\frac{\sin t}{t^2}+ O(t^{-3}), \qquad  Ci(t)=\frac{\sin t}{t}-\frac{\cos t}{t^2}+ O(t^{-3}),
\end{split}
\]
as $t\to +\infty$, we get \emph{2}. 

Given $R>0$, let $\psi\in C^\infty_c(\R)$ be a cut-off function with $\psi \equiv 1$ on $(-R,R)$. Let us denote
$g_0:= -\frac{1}{\pi} \log |\cdot|-\frac{\gamma}{\pi}$, $g_1:= \frac{1}{2}|\cdot| \psi$, $g_2:= G-g_0-g_1$.  By Proposition \ref{Uniq}, we have 
$$\tilde g_0(x,y)= -\frac{1}{\pi}\log |(x,y)|-\frac{\gamma}{\pi},\quad (x,y)\in \R^2.$$ Denoting $\theta(x,y) := \arctan{\frac{x}{y}}$ the angle between the $y$-axis and the segment connecting the origin to $(x,y)$, the function
$$
h(x,y):=\tilde g_1(x,y) - \frac{1}{\pi} x  \,\theta(x,y) +\frac{1}{2\pi}y \log (x^2+y^2)
$$ is harmonic in $\R^2_+$, continuous  on $\ov \R^2_+$, and identically $0$ on $(-R,R)\times \R$. By \cite[Theorem C]{Rudin}, we get that $h\in C^\infty(\ov{\R^2_+} \cap B_{R}(0,0))$. Finally, note  that formula \eqref{formula GreenR} implies $g_2\in C^2(\R)$ and $g_2(0)=0$. Hence, standard elliptic regularity yields $\tilde g_2 \in C^{1,\alpha}(\ov{\R^2_+} \cap B_R(0,0))$, for any $\alpha \in (0,1)$. In particular $\tilde g_2(0,0)=g_2(0)=0$. 
\end{proof}

%

\begin{lemma}\label{L2conv} We have  $\mu_k u_k \to G$ in  ${L^2(\R)} \cap L^\infty (\R \setminus (-\eps,\eps))$, for any $\eps >0$,
\end{lemma}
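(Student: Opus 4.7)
Setting $v_k := \mu_k u_k$ and $f_k := \mu_k \lambda_k u_k e^{\alpha_k u_k^2}$, equation \eqref{eqR} multiplied by $\mu_k$ gives $(\hl+\mathrm{Id})v_k = f_k$. Since $G$ is the Green function of $\hl+\mathrm{Id}$ on $\R$, one has the representation
\[
v_k(x) = \int_\R G(x-y) f_k(y) \, dy, \qquad v_k(x)-G(x) = \int_\R [G(x-y)-G(x)]\, f_k(y)\, dy + G(x)\bigl(\|f_k\|_{L^1(\R)}-1\bigr),
\]
and my plan is to estimate both target norms through this decomposition once the mass of $(f_k)$ is shown to concentrate at the origin.

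I will need two preliminary facts on $(f_k)$: \textbf{(a)} $\|f_k\|_{L^1(\R)}\to 1$ as $k\to\infty$, and \textbf{(b)} $f_k\to 0$ uniformly on $\{|y|\ge t\}$ for every $t>0$. Fact (b) follows directly from \eqref{decay}, Lemma \ref{conv 0 in L2R} and \eqref{muklambdak su R}: on $\{|y|\ge t\}$ one has $u_k(y) \le \|u_k\|_{L^2(\R)}/\sqrt{2t}\to 0$, so $e^{\alpha_k u_k^2}\le 2$ eventually, and $f_k(y) \le C\mu_k\lambda_k u_k(y)\le C\mu_k\lambda_k\|u_k\|_{L^2(\R)}/\sqrt{t}\to 0$. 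For (a) I would mimic the proof of Proposition \ref{convdelta0} (as invoked for the $\R$-version in Lemma \ref{convdelta}), splitting $\int_\R f_k\, dy$ over $(-Rr_k,Rr_k)$, $\{u_k\le\mu_k/A\}$ and the remainder, and invoking respectively Remark \ref{integralsR}, Lemma \ref{lemma1A su R} combined with Theorem \ref{MT3}, and the energy identity $\int_\R \lambda_k u_k^2 e^{\alpha_k u_k^2}\, dy=1$.

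For the $L^\infty$-convergence on $\R\setminus(-\eps,\eps)$ I will fix $\eps>0$, $|x|\ge\eps$ and $\delta\in(0,\eps/2)$, and split $\int_\R [G(x-y)-G(x)] f_k\, dy$ over $\{|y|\le\delta\}$ and $\{|y|>\delta\}$. On the first region, Proposition \ref{PropGR}(1) yields $|G(x-y)-G(x)|\le C(\eps)|y|$, so the contribution is at most $C(\eps)\delta\|f_k\|_{L^1(\R)}$. On the second, fact (b) makes $f_k$ uniformly small; the logarithmic blow-up of $G(x-y)$ at $y=x$ is harmless since $G\in L^1_{\loc}(\R)$, and the infinite tail is controlled by the quadratic decay $|G(y)|=O(|y|^{-2})$ from Proposition \ref{PropGR}(2). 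Combined with (a) and $|G(x)|\le C(\eps)$ for $|x|\ge\eps$, the residual term $G(x)(\|f_k\|_{L^1(\R)}-1)$ vanishes uniformly on $\R\setminus(-\eps,\eps)$.

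For $L^2(\R)$-convergence, the natural tool is Minkowski's integral inequality,
\[
\Bigl\|\int_\R [G(\cdot - y)-G]\, f_k(y)\, dy\Bigr\|_{L^2(\R)} \le \int_\R \|G(\cdot - y) - G\|_{L^2(\R)}\, f_k(y)\, dy.
\]
The function $\phi(y) := \|G(\cdot -y)-G\|_{L^2(\R)}$ is continuous, bounded (since $G\in L^2(\R)$ by Proposition \ref{PropGR}), and vanishes at $y=0$. Splitting $\int_\R\phi f_k\, dy$ over $\{|y|<\delta\}$ and $\{|y|\ge\delta\}$, letting $k\to\infty$ (which kills the outer piece by (b)) and then $\delta\to 0$ (which kills the inner piece by continuity of $\phi$) gives $\int_\R\phi f_k\, dy\to 0$. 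The leftover pointwise product $G\,(\|f_k\|_{L^1(\R)}-1)$ tends to $0$ in $L^2(\R)$ by (a) and $G\in L^2(\R)$. I expect the main obstacle to be fact (a): in contrast with the bounded-interval Lemma \ref{leconvgreen}, one cannot simply test $f_k\rw\delta_0$ against $\mathbf{1}\in C(\bar I)$ on the full line, and some care is required to rule out escape of mass to infinity.
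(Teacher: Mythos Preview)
Your strategy mirrors the paper's, but the linchpin fact (a), $\|f_k\|_{L^1(\R)}\to 1$, is not established by your sketch, and this gap propagates to both target estimates. When you mimic Proposition~\ref{convdelta0} on $\R$, the piece over $\{u_k\le\mu_k/A\}$ becomes
\[
\mu_k\lambda_k\int_{\{u_k\le\mu_k/A\}} u_k\bigl(e^{\alpha_k u_k^2}-1\bigr)\,dy\;+\;\mu_k\lambda_k\int_{\{u_k\le\mu_k/A\}} u_k\,dy.
\]
The first term is indeed controlled by Lemma~\ref{lemma1A su R} and Theorem~\ref{MT3}, but the second requires a uniform bound on $\|u_k\|_{L^1(\R)}$, which none of the quoted ingredients supply: you only have $u_k\in L^2$, and the decay $u_k(y)\le C|y|^{-1/2}$ from~\eqref{decay} is not integrable. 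Without (a), your decomposition $v_k-G=\int_\R[G(\cdot-y)-G]f_k\,dy+G\,(\|f_k\|_{L^1(\R)}-1)$ may not even be well defined. Furthermore, in your $L^\infty$ step the term $|G(x)|\int_{|y|>\delta}f_k\,dy$ is only controlled through (a); and in your $L^2$ step the claim that ``(b) kills the outer piece'' fails because $\phi(y)=\|G(\cdot-y)-G\|_{L^2}$ is bounded but not integrable, so pairing the $L^\infty$-smallness of $f_k$ with $\int\phi=\infty$ gives nothing, while bounding $\int_{|y|\ge\delta}\phi f_k\le (\sup\phi)\int_{|y|\ge\delta}f_k$ again returns you to (a).

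The paper sidesteps (a) entirely by never claiming $L^1$-control of $f_k$ on all of $\R$. It fixes $I=(-1,1)$ and splits $v_k(x)-G(x)$ into an integral over $I$, the term $(\|f_k\|_{L^1(I)}-1)G(x)$, and the tail $w_k(x):=\int_{\R\setminus I}G(x-y)f_k(y)\,dy$. On $I$ one has $\|f_k\|_{L^1(I)}\to 1$ from Lemma~\ref{convdelta}. For $w_k$ the key observation is that $f_k\to 0$ in $L^2(\R\setminus I)$ (indeed $\|f_k\|_{L^2(\R\setminus I)}\le C\mu_k\lambda_k\|u_k\|_{L^2}\to 0$ by~\eqref{decay}, Lemma~\ref{conv 0 in L2R} and~\eqref{muklambdak su R}); pairing this with $G\in L^2(\R)$ via Cauchy--Schwarz yields $\|w_k\|_{L^\infty(\R)}\to 0$, and with $G\in L^1(\R)$ via Young's inequality yields $\|w_k\|_{L^2(\R)}\to 0$. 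The remaining integral over the bounded set $I$ is then handled essentially as you propose, testing $f_k\rightharpoonup\delta_0$ (Lemma~\ref{convdelta}) against the continuous function $\psi(y)=\int_\R|G(x-y)-G(x)|^2\,dx$ cut off to $I$.
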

\begin{proof}
Let us set $v_k:= \mu_k u_k - G$ and $f_k = \mu_k \lambda_k u_k e^{\alpha_k u_k^2}$. By Lemma \ref{convdelta} we have $\|f_k\|_{L^1(I)}\to 1$ as $k\to +\infty$, $I =(-1,1)$.  Then, arguing as in Lemma \ref{convgreen}, we get
\begin{equation}\label{Green1R}
\begin{split}
|v_k(x)| & =  \left| \int_{\R} G(y-x) f_k(y)\,dy - G(x) \right| \\
 & \le \int_{I}  |G(x-y)-G(x)| f_k(y)  \,dy   + \underbrace{|\|f_k\|_{L^1(I)}-1|}_{=o(1)}\, |G(x)| + \underbrace{\int_{\R\setminus I} G(x-y) f_k(y) dy.}_{=: w_k(x)}
\end{split}
\end{equation}
Using \eqref{decay}, Lemma \ref{conv 0 in L2R} and \eqref{muklambdak su R}, we get that $f_k \to 0$ in $L^2(\R\setminus I)$.  In particular 
$$
\left|w_k(x)\right|\le \|f_k\|_{L^2(\R\setminus I)} \|G\|_{L^2(\R)} \to 0. 
$$
Fix $\sigma\in (0,1)$ and assume $|x|\ge \sigma$. If we further take  $|y|\le \frac{\sigma}{2}$, then Proposition \ref{PropGR} implies
\[\begin{split}
|G(x-y)-G(x)|& \le C |y|,
\end{split}\]
where $C$ is a constant depending only on $\sigma$.  Thus, for any $\eps \in (0,\frac{\sigma}{2})$,  we can write 
\begin{equation}\label{Green3R}\begin{split}
|v_k(x)|& \le \int_{I}  |G(x-y)-G(x)| f_k(y)  \,dy + o(1) \|G\|_{L^\infty(\R\setminus (-\sigma,\sigma))}+o(1)\\
&  \le  C \int_{-\eps}^\eps  |y| f_k(y)  \,dy + \int_{I\setminus(-\eps,\eps)}  |G(x-y)| f_k(y)  \,dy +  |G(x)| \int_{I\setminus(-\eps,\eps)} f_k(y)  \,dy +o(1)\\
& \le C \eps \|f_k\|_{L^1(I)} + \|f_k\|_{L^\infty(I \setminus (-\eps,\eps))}  \left( \|G\|_{L^1(\R)} +  \|G\|_{L^\infty(\R\setminus (-\sigma,\sigma))} \right) +o(1) \\
& \le  C \eps +o(1),
\end{split}\end{equation}
where $o(1)\to 0$ as $k\to \infty$ (depending on $\eps$ and $\sigma$). 
Here, we used that $f_k\to 0$ in $L^\infty(\R\setminus (-\eps,\eps))$  by \eqref{decay} and \eqref{muklambdak su R}. Since $\eps$  is arbitrarily small, \eqref{Green3R} shows that $v_k\to 0$ in $L^\infty(\R \setminus (-\sigma,\sigma))$. 

Next, we prove the $L^2$ convergence. First, H\"older's inequality and Fubini's theorem give
$$
\|w_k\|_{L^2(\R)}^2 = \int_{\R} \left( \int_{\R\setminus I} G(x-y) f_k(y) dy \right)^2 dx \le  \|G\|_{L^1(\R)}^2 \|f_k\|_{L^2(\R\setminus I)}^2\to 0 
$$
as $k\to \infty$. With a similar argument, after integrating \eqref{Green1R} and using the triangular inequality in $L^2$, we find
\[\begin{split}
\|v_k\|_{L^2(\R)} &\le \left( \int_\R \left( \int_I |G(x-y)-G(x)|f_k(y ) \, dy\,\right)^2 dx\right)^\frac{1}{2}  + | \|f_k\|_{L^1(I)} -1| \|G\|_{L^2(\R)} + \|w_k\|_{L^2(\R)}\\
& \le \underbrace{\bra{\int_{I} f_k(y)dy}^\frac{1}{2}}_{=1+o(1)} \left( \int_I  f_k(y)  \int_{\R}  |G(x-y)-G(x)|^2 \,dx\, dy \right)^\frac{1}{2} + o(1).
\end{split}
\]
Since $G\in L^2(\R)$, the function $\psi(y):= \int_{\R} |G(x-y)-G(x)|^2 dx$ is continuous on $\R$ and $\psi(0)=0$. Let $\ph\in C(\R)$ be a compactly supported function such that $\ph\equiv \psi$ on $I$. Then, Lemma \ref{convdelta} implies
\[
\int_I  f_k(y)  \int_{\R}  |G(x-y)-G(x)|^2 \,dx\, dy = \int_{I} f_k(y)\ph(y) dy= \int_{\R} f_k(y)\ph(y) dy + o(1)=o(1),
\]
as $k\to \infty$, and the conclusion follows.

\end{proof}

Repeating the argument of Proposition \blu{\ref{convgreen}}, we get the following:

\begin{lemma}\label{ConvGreenR}
We have $\mu_k \tilde u_k \rightarrow \tilde G$  in $C^0_{\loc}(\overline{\R^2_+}\setminus\{(0,0)\}) \cap C^1_{\loc}( \R^2_+)$, where $\tilde{G}$ is the Poisson extension of $G$. 
\end{lemma}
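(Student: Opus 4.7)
The argument parallels the one used in Proposition \ref{convgreen}, with the $L^1$ ingredient replaced by the $L^2$ ingredient furnished by Lemma \ref{L2conv}. I would set $v_k:=\mu_k u_k - G$ and $\widetilde v_k := \mu_k \widetilde u_k - \widetilde G$. Since $\widetilde u_k$ and $\widetilde G$ are the Poisson integrals of $u_k$ and $G$ (the latter belonging to $L^1(\R)\cap L^2(\R)$ by Proposition \ref{PropGR}), $\widetilde v_k$ is the Poisson integral of $v_k$. The goal is then to show $\widetilde v_k \to 0$ in $C^0_{\loc}(\overline{\R^2_+}\setminus\{(0,0)\})$; the $C^1_{\loc}(\R^2_+)$ statement follows for free since each $\widetilde v_k$ is harmonic on $\R^2_+$, so interior elliptic estimates upgrade uniform convergence on compact subsets of $\R^2_+$ to $C^\ell$ convergence for every $\ell\geq 0$.

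For the main estimate, fix $\eps>0$ and split
$$
\widetilde v_k(x,y)=\frac{1}{\pi}\int_{-\eps}^{\eps}\frac{y\, v_k(\xi)}{(x-\xi)^2+y^2}\,d\xi+\frac{1}{\pi}\int_{\R\setminus(-\eps,\eps)}\frac{y\, v_k(\xi)}{(x-\xi)^2+y^2}\,d\xi.
$$
The tail integral is bounded, using $\int_\R \frac{y}{(x-\xi)^2+y^2}d\xi=\pi$, by $\|v_k\|_{L^\infty(\R\setminus(-\eps,\eps))}$, which tends to $0$ uniformly in $(x,y)$ by Lemma \ref{L2conv}. For the near-origin integral, I restrict to $(x,y)\in\R^2_+\setminus B_{2\eps}(0,0)$, so that $|(x,y)-(\xi,0)|\geq \eps$ for every $\xi\in(-\eps,\eps)$; Cauchy--Schwarz then yields
$$
\left|\frac{1}{\pi}\int_{-\eps}^{\eps}\frac{y\,v_k(\xi)}{(x-\xi)^2+y^2}\,d\xi\right|\leq \frac{y}{\pi\eps^2}\sqrt{2\eps}\,\|v_k\|_{L^2(\R)},
$$
which tends to $0$ as $k\to\infty$ again by Lemma \ref{L2conv}. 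Combining these bounds yields $\widetilde v_k\to 0$ uniformly on compact subsets of $\R^2_+\setminus B_{2\eps}(0,0)$, and since $\eps>0$ is arbitrary, on compact subsets of $\overline{\R^2_+}\setminus\{(0,0)\}$.

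The only point that deserves care, compared to Proposition \ref{convgreen}, is that the Poisson integral is now taken over the whole real line rather than over the bounded interval $I$, so the tail $\xi\to\pm\infty$ is no longer truncated by the support of $v_k$. This is precisely why I use $\|v_k\|_{L^\infty(\R\setminus(-\eps,\eps))}\to 0$ from Lemma \ref{L2conv} (not just $\|v_k\|_{L^\infty(I\setminus(-\eps,\eps))}$), and why the $L^2$ rather than $L^1$ control is convenient for estimating $\int_{-\eps}^\eps|v_k|$. No further obstacle appears.
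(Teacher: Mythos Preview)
Your proof is correct and follows the same approach as the paper, which simply refers back to the argument of Proposition \ref{convgreen}. The only adaptation you make---replacing the $L^1$ bound $\frac{y}{\pi\eps^2}\|v_k\|_{L^1(I)}$ by the $L^2$ bound $\frac{y}{\pi\eps^2}\sqrt{2\eps}\,\|v_k\|_{L^2(\R)}$ via Cauchy--Schwarz---is exactly what is forced by the available input (Lemma \ref{L2conv} gives $L^2(\R)$ and $L^\infty(\R\setminus(-\eps,\eps))$ convergence rather than $L^1$), and your remark about the unbounded tail being handled by the global $L^\infty$ control is the only point where care is needed beyond the bounded-interval case.
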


With Proposition \ref{convetaR} and Lemma \ref{ConvGreenR} we can give an upper bound on $D_\pi$.

\begin{prop} Under the assumption that $\mu_k\to \infty$ as $k\to\infty$, we have
$D_\pi \le 2\pi e^{-\gamma}$. 
\end{prop}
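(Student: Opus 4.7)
The proof parallels that of Proposition~\ref{Cg1}, with two modifications dictated by the Ruf setting: the Dirichlet energy of $\tilde u_k$ equals $1-\|u_k\|_{L^2(\R)}^2$ rather than $1$, and the Green's function $G$ here satisfies $(-\Delta)^{\frac12}G+G=\delta_0$ instead of $(-\Delta)^{\frac12}G=\delta_0$. Both modifications produce extra $\|G\|_{L^2(\R)}^2$ contributions which, as will turn out, cancel against each other exactly.

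Fix $L$ large and $\delta\in(0,1)$ small, and define $a_k$, $b_k$ and the double truncation $\tilde v_k:=(\tilde u_k\wedge a_k)\vee b_k$ in exact analogy with the proof of Proposition~\ref{Cg1}. The harmonic capacity estimate
\[
\pi\frac{(a_k-b_k)^2}{\log\delta-\log(Lr_k)}\le \|\nabla\tilde u_k\|_{L^2(\R^2_+)}^2-\int_{\R^2_+\cap B_{Lr_k}}|\nabla\tilde u_k|^2\,dxdy-\int_{\R^2_+\setminus B_\delta}|\nabla\tilde u_k|^2\,dxdy
\]
holds as before. Proposition~\ref{convetaR} gives $a_k=\mu_k+\mu_k^{-1}(-\frac{1}{\pi}\log L+O(L^{-1})+o(1))$, while Lemma~\ref{ConvGreenR} together with the expansion \eqref{ExpGtildeR} gives $b_k=\mu_k^{-1}(-\frac{1}{\pi}\log\delta-\frac{\gamma}{\pi}+O(\delta|\log\delta|)+o(1))$, since the two extra terms $\frac{x}{\pi}\arctan(x/y)$ and $-\frac{y}{2\pi}\log(x^2+y^2)$ in \eqref{ExpGtildeR} are of order $\delta|\log\delta|$ on $\de B_\delta\cap\R^2_+$.

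For the three energy integrals, rescaling and Proposition~\ref{convetaR} yield $\mu_k^2\int_{\R^2_+\cap B_{Lr_k}}|\nabla\tilde u_k|^2\,dxdy\to\frac{1}{\pi}\log\frac{L}{2}+O(\frac{\log L}{L})$, while weak lower semicontinuity and Lemma~\ref{ConvGreenR} yield $\liminf_k\mu_k^2\int_{\R^2_+\setminus B_\delta}|\nabla\tilde u_k|^2\,dxdy\ge\int_{\R^2_+\setminus B_\delta}|\nabla\tilde G|^2\,dxdy$. The crucial new calculation is the latter integral: integration by parts gives
\[
\int_{\R^2_+\setminus B_\delta}|\nabla\tilde G|^2\,dxdy=\int_{\de B_\delta\cap\R^2_+}(-\de_r\tilde G)\tilde G\,d\sigma+\int_{\R\setminus(-\delta,\delta)}(-\de_y\tilde G(x,0))G(x)\,dx,
\]
and here $(-\Delta)^{\frac12}G+G=\delta_0$ forces $-\de_y\tilde G(x,0)=(-\Delta)^{\frac12}G(x)=-G(x)$ for $x\ne 0$, so the boundary integral along $\R$ equals $-\|G\|_{L^2(\R)}^2+O(\delta\log^2\delta)$, while the integral over the semicircle yields $-\frac{1}{\pi}\log\delta-\frac{\gamma}{\pi}+O(\delta\log^2\delta)$ by \eqref{ExpGtildeR}. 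Finally $\|\nabla\tilde u_k\|_{L^2(\R^2_+)}^2=1-\|u_k\|_{L^2(\R)}^2$ together with Lemma~\ref{L2conv} gives $\mu_k^2\|u_k\|_{L^2(\R)}^2\to\|G\|_{L^2(\R)}^2$, so after multiplying the energy balance through by $\mu_k^2$ the two $\|G\|_{L^2(\R)}^2$ contributions annihilate.

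Substituting into the capacity inequality, expanding $\pi(a_k-b_k)^2=\pi\mu_k^2-2\log L+2\log\delta+2\gamma+O(\mu_k^{-2})$, and using $\log r_k=-\alpha_k\mu_k^2-\log(\alpha_k\lambda_k\mu_k^2)$, one arrives (after the algebra of Proposition~\ref{Cg1}, with $S_0$ there replaced by $-\gamma/\pi$) at
\[
\log\frac{1}{\lambda_k\mu_k^2}\le(1-\tfrac{\alpha_k}{\pi})(\log L-\log\delta)+(\alpha_k-\pi)\mu_k^2-(2-\tfrac{\alpha_k}{\pi})\gamma+\tfrac{\alpha_k}{\pi}\log 2+\log\alpha_k+o(1).
\]
Letting first $k\to\infty$ (so that $(\alpha_k-\pi)\mu_k^2\le 0$ and the $(1-\alpha_k/\pi)$-term vanishes), then $L\to\infty$ and $\delta\to 0$ to absorb the $O(\frac{\log L}{L})$ and $O(\delta\log^2\delta)$ residues, the right-hand side converges to $-\gamma+\log(2\pi)=\log(2\pi e^{-\gamma})$, and Proposition~\ref{stimaalto su R} then yields $D_\pi\le 2\pi e^{-\gamma}$. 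The main obstacle is isolating and cleanly cancelling the $\|G\|_{L^2(\R)}^2$ contribution produced by the modified equation for $G$; once that cancellation is observed, the rest is a direct adaptation of the proof of Proposition~\ref{Cg1}.
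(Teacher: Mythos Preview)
Your proof is correct and follows essentially the same route as the paper: the same truncation and capacity comparison, the same expansions of $a_k$, $b_k$, and the three energy integrals via Propositions~\ref{convetaR}, \ref{PropGR} and Lemmas~\ref{ConvGreenR}, \ref{L2conv}, with the identical cancellation of the two $\|G\|_{L^2(\R)}^2$ contributions coming respectively from $-\de_y\tilde G(\cdot,0)=-G$ and from $\mu_k^2\|u_k\|_{L^2}^2\to\|G\|_{L^2}^2$. The final inequality you obtain matches the paper's (with $(\frac{\alpha_k}{\pi}-2)\gamma=-(2-\frac{\alpha_k}{\pi})\gamma$), and the conclusion via Proposition~\ref{stimaalto su R} is the same.
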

\begin{proof} For a fixed and small $\delta>0$ set
$$a_k:=\inf_{B_{Lr_k}\cap \R^2_+} \tilde u_k,\quad  b_k:=\sup_{B_\delta\cap \R^2_+} \tilde u_k,\quad \tilde v_k:= (\tilde u_k \wedge a_k)\vee b_k.$$
Recalling that $\|\nabla \tilde u_k\|_{L^2(\R)}^2=\|(-\Delta)^\frac{1}{4} u_k\|^2_{L^2(\R)} =1-\|u_k\|^2_{L^2(\R)}$, we have
\begin{equation}
\int_{(B_\delta \setminus B_{Lr_k})\cap \R^2_+}|\nabla \tilde v_k|^2dxdy\le 1-\|u_k\|_{L^2}^2 -\int_{\R^2_+\setminus B_\delta} |\nabla \tilde u_k|^2dx -\int_{\R^2_+\cap B_{Lr_k}} |\nabla \tilde u_k|^2dx
\end{equation}
Clearly the left-hand side bounds
\[\begin{split}
\inf_{\substack{\tilde u|_{\R^2_+\cap \de B_{Lr_k}}=a_k\\ \tilde u|_{\R^2_+\cap \de B_\delta}=b_k}} \int_{(B_\delta \setminus B_{Lr_k})\cap \R^2_+}|\nabla \tilde u|^2dxdy=\pi\frac{(a_k-b_k)^2}{\log \delta -\log (Lr_k)}.
\end{split}\]
Using Proposition \ref{convetaR}, Proposition \ref{PropGR} and Lemma \ref{ConvGreenR} we obtain
\begin{equation}\label{a and b}
a_k=\mu_k+\frac{-\frac{1}{\pi}\log L+O(L^{-1})+o(1)}{\mu_k} \quad \text{ and } \quad b_k=\frac{-\frac{1}{\pi}\log \delta -\frac{\gamma}{\pi} +O(\delta |\log \delta|)+o(1)}{\mu_k},
\end{equation}
where $o(1)\to0$ as $k\to\infty$ for fixed $L>0$, $\delta>0$,  and $|O(L^{-1})|\le C L^{-1}$, $|O(\delta |\log \delta|)|\le C\delta |\log \delta|$,  uniformly for $\delta$ small, and $L$, $k$ large. Still with Proposition \ref{convetaR} we get
\[
\begin{split}
\lim_{k\to\infty}\mu_k^2\int_{B_{Lr_k}^+} |\nabla \tilde u_k|^2 dxdy &=\frac{1}{4\pi^2}\int_{B_L^+}|\nabla \tilde \eta_\infty|^2dxdy\\
&=\frac{1}{\pi}\log\frac{L}{2} +O\bra{\frac{\log L}{L}}.
\end{split}\]
Similarly Lemma \ref{ConvGreenR} and Proposition \ref{PropGR} yield
\[
\begin{split}
\liminf_{k\to\infty} \mu_k^2\int_{\R^2_+\setminus B_\delta}|\nabla \tilde u_k|^2dxdy
&\ge \int_{\R^2_+\setminus B_\delta}|\nabla \tilde G|^2dxdy\\
&=\int_{\R^2_+\cap \partial B_\delta} -\frac{\de \tilde G}{\de r} \tilde G d\sigma + \int_{(\R \times \{0\} )\setminus B_\delta} -\frac{\de \tilde G(x,0)}{\de y}  G(x)dx \\
&=\int_{\R^2_+\cap \partial B_{\delta}} \bra{\frac{1}{\pi \delta}+O(|\log \delta|)}\bra{-\frac{1}{\pi}\log\delta -\frac{\gamma}{\pi}+O(\delta|\log \delta|)} d\sigma  \\
& \qquad -  \int_{\R \setminus (-\delta,\delta)} G(x)^2dx   \\
&= -\frac{1}{\pi}\log \delta -\frac{\gamma}{\pi} -\|G\|_{L^2(\R)}^2 + O(\delta \log^2\delta),
\end{split}
\]
where we used that 
$$-\frac{\de \tilde G(x,0)}{\de y}=\hl G(x)= -G(x),\quad \text{for }x\in \R\setminus \{0\}.$$ 
From Lemma \ref{L2conv} we get that  $\mu_k u_k\to G$ in $L^2(\R)$, hence 
$$
\|u_k\|_{L^2(\R)}^2 = \frac{\|G\|_{L^2(\R)}^2+o(1)}{\mu_k^{2}}
$$
as $k\to +\infty$.
We then get
\[\begin{split}
\frac{\pi(a_k-b_k)^2}{\log\delta -\log(Lr_k)}&\le 1-\frac{-\frac{1}{\pi}\log \delta-\frac{\gamma}{\pi} +O(\delta\log^2 \delta)+\frac{1}{\pi}\log\frac{L}{2}+O\bra{\frac{\log L}{L}}+o(1)}{\mu_k^2}.
\end{split}\]
Using  \eqref{a and b}  and rearranging as in the proof of Proposition \ref{Cg1}, we find
\[\begin{split}
\log\frac{1}{\lambda_k \mu_k^2}&\le  \bra{1-\frac{\alpha_k}{\pi}}(\log L-\log\delta)+(\alpha_k-\pi)\mu_k^2 +(\frac{\alpha_k}{\pi} - 2)\gamma
+\frac{\alpha_k}{\pi}\log 2 +\log\alpha_k\\
&\quad +O(\delta \log^2 \delta )+ O\bra{\frac{\log L}{L}}+o(1),
\end{split}\]
with $o(1)\to 0$ as $k\to \infty$. Then, recalling that $\alpha_k\uparrow \pi$, letting $k\to \infty$ first and then $L\to \infty$, $\delta\to 0$, we obtain
$$\limsup_{k\to\infty}\log  \frac{1}{\lambda_k \mu_k^2} \le -\gamma+ \log(2\pi),$$
and using Proposition \ref{stimaalto su R} we conclude.
\end{proof}


\begin{prop}\label{Cg2bis}
There exists a function $u\in H^{\frac{1}{2},2}(\R)$ such that $\|u\|_{H}\le 1$ and $E_\pi(u)>2\pi e^{-\gamma}.$
\end{prop}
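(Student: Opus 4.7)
The plan is to adapt the test-function construction from Proposition \ref{Cg2} to the full real line, with the Green function $G$ of $\hl+\Id$ on $\R$ playing the role of the Green function of $\hl$ on $I$. By the expansion \eqref{ExpGtildeR}, the Poisson extension $\tilde G$ satisfies $\tilde G(x,y) = -\frac{1}{\pi}\log|(x,y)| - \frac{\gamma}{\pi} + O(|(x,y)|\log(1/|(x,y)|))$ near the origin, so the constant $S_0 = \frac{\log 2}{\pi}$ from Proposition \ref{Cg2} (which gave the threshold $2\pi e^{\pi S_0} = 4\pi$) is effectively replaced here by $-\frac{\gamma}{\pi}$, yielding the threshold $2\pi e^{-\gamma}$.

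For $\ve>0$ small, I would choose $L=L(\ve)\to\infty$ with $L\ve\to 0$ (for instance $L=\log^2(1/\ve)$), set $\gamma_{L\ve}:=\min_{\partial B_{L\ve}\cap\R^2_+}\tilde G$ and $\Omega_{L\ve}:=\{\tilde G>\gamma_{L\ve}\}$, and define $U_\ve$ on $\overline{\R^2_+}$ by exactly the same three-piece formula as in Proposition \ref{Cg2}: a logarithmic profile $c-\frac{\log(x^2/\ve^2+(1+y/\ve)^2)+2B}{2\pi c}$ on $\R^2_+\cap B_{L\ve}(0,-\ve)$, the constant $\gamma_{L\ve}/c$ on $\Omega_{L\ve}\setminus B_{L\ve}(0,-\ve)$, and $\tilde G/c$ on $\R^2_+\setminus\Omega_{L\ve}$. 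The parameters $B$ and $c$ are determined by continuity across $\partial B_{L\ve}(0,-\ve)$ and by the normalization $\|u_\ve\|_H^2\le 1$, where $u_\ve(x):=U_\ve(x,0)$.

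The heart of the argument is the asymptotic computation of $\|u_\ve\|_H^2 = \|\ql u_\ve\|_{L^2(\R)}^2+\|u_\ve\|_{L^2(\R)}^2$, bounded above by $\|\nabla U_\ve\|_{L^2(\R^2_+)}^2 + \|u_\ve\|_{L^2(\R)}^2$. The inner Dirichlet piece on $B_{L\ve}(0,-\ve)$ is identical to Proposition \ref{Cg2} and equals $\frac{\log(L/2)}{\pi c^2}+O(\frac{\log L}{Lc^2})$. For the outer piece $\int_{\R^2_+\setminus\Omega_{L\ve}}|\nabla\tilde G|^2$, integration by parts combined with the identity $-\partial_y\tilde G(x,0)=\hl G(x)=-G(x)$ (valid for $x\ne 0$) produces a \emph{non-vanishing} boundary contribution $-\|G\|_{L^2(\R)}^2+O(L\ve\log^2(L\ve))$, in contrast to Proposition \ref{Cg2} where the corresponding term vanished because $\hl G\equiv 0$ on $I\setminus\{0\}$. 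A divergence-theorem argument near the logarithmic singularity yields $\int_{\partial\Omega_{L\ve}\cap\R^2_+}|\nabla\tilde G|\,d\sigma=1+O(L\ve|\log(L\ve)|)$, so altogether
\[
\int_{\R^2_+\setminus\Omega_{L\ve}}|\nabla\tilde G|^2\,dxdy=\gamma_{L\ve}-\|G\|_{L^2(\R)}^2+O(L\ve\log^2(L\ve)).
\]
Arguing as in Lemma \ref{L2conv}, one has $\|u_\ve\|_{L^2(\R)}^2=\frac{\|G\|_{L^2(\R)}^2}{c^2}+o(1/c^2)$, and the crucial point is that the two copies of $\|G\|_{L^2(\R)}^2/c^2$ exactly cancel. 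The normalization $\|u_\ve\|_H^2\le 1$ therefore reduces to the analogue of \eqref{contoc} with $-\pi S_0$ replaced by $\gamma$, i.e. $\pi c^2=-\log(2\ve)-\gamma+o(1)$, and consequently $B=-\log 2+o(1)$.

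The conclusion then mirrors Proposition \ref{Cg2}: changing variables $x=\ve z$ on $I_{L\ve}^1:=(-\ve\sqrt{L^2-1},\ve\sqrt{L^2-1})$ yields $\int_{I_{L\ve}^1}e^{\pi u_\ve^2}\,dx = 2\pi e^{-\gamma}+o(1/c^2)$, while the inequality $e^t-1\ge t$ applied on $\R\setminus I_{L\ve}^1$ (where $u_\ve=G/c$ away from a set of vanishing measure) gives a strictly positive lower bound of order $\pi\|G\|_{L^2(\R)}^2/c^2+o(1/c^2)$. Summing, $E_\pi(u_\ve) > 2\pi e^{-\gamma}$ for $\ve$ small enough. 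Beyond the careful bookkeeping of error terms, which forces the choice $L=\log^2(1/\ve)$ (or similar) so as to absorb remainders such as $c^2 L\ve$ arising from $\int_{\Omega_{L\ve,0}}u_\ve^2\,dx$, the main obstacle is recognizing the precise cancellation of $\|G\|_{L^2(\R)}^2$ between the Dirichlet and $L^2$ components of the $H$-norm, which is exactly what allows the parallel with Proposition \ref{Cg2} to go through.
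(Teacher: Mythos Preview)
Your proposal is correct and follows essentially the same approach as the paper's own proof: the same three-piece test function $U_\ve$ built from $\tilde G$, the same determination of $B$ and $c$, and the same key observation that the boundary term $(II)=-\frac{1}{c^2}\int_{(\R\times\{0\})\setminus\bar\Omega_{L\ve}}\partial_y\tilde G\cdot\tilde G\,dx$ (which equals $-\|G\|_{L^2(\R)}^2/c^2$ up to lower order) is exactly cancelled by $\|u_\ve\|_{L^2(\R)}^2$ in the full $H$-norm, after which the computation reduces to that of Proposition~\ref{Cg2} with $S_0$ replaced by $-\gamma/\pi$. The only cosmetic difference is that the paper organizes the cancellation by writing $\|u_\ve\|_{L^2(\R)}^2=-(II)+O(L\ve\log^2(L\ve))/c^2$ directly from the definition of $u_\ve$ (no appeal to Lemma~\ref{L2conv} is needed), and it imposes the exact normalization $\|\nabla U_\ve\|_{L^2(\R^2_+)}^2+\|u_\ve\|_{L^2(\R)}^2=1$ rather than an inequality.
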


\begin{proof} For $\ve>0$ choose $L=L(\ve)>0$ such that as $\ve\to 0$ we have $L\to\infty$ and $L\ve\to 0$. Fix
$$\Gamma_{L\ve}:=\left\{(x,y)\in \R^2_+: \tilde G(x,y)=\gamma_{L\ve}:=\min_{\R^2_+\cap\de B_{L\ve}} \tilde G\right\}, $$
and
$$\Omega_{L\ve}:=\left\{ (x,y)\in \R^2_+: \tilde G(x,y)>\gamma_{L\ve} \right\}.$$
By the maximum principle we have $\R^2_+\cap B_{L\ve}\subset \Omega_{L\ve}$. Notice also that Proposition \ref{PropGR} gives
$$\gamma_{L\ve} =-\frac{1}{\pi}\log(L\ve)-\frac{\gamma}{\pi}+O(L\ve |\log (L \eps)|).$$
and $\Omega_{L\ve}\subseteq \R^2_+\cap B_{2L_\eps}$. For suitable constants  $B,c\in \R$ to be fixed we set
\[
U_\ve(x,y):=\left\{
\begin{array}{ll}
\displaystyle c- \frac{\log\bra{\frac{x^2}{\ve^2} +\bra{1+\frac{y}{\ve}}^2 }+ 2B }{2\pi c}& \text{for } (x,y)\in B_{L\ve}(0,-\ve)\cap \R^2_+\\
\displaystyle\rule{0cm}{0.5cm} \frac{\gamma_{L\ve}}{c} &\text{for }(x,y)\in \Omega_{L\ve}\setminus  B_{L\ve}(0,-\ve)\\
\displaystyle\rule{0cm}{0.8cm}  \frac{\tilde G(x,y)}{c} &\text{for }(x,y)\in \R^2_+\setminus \Omega_{L\ve}.
\end{array}
\right.
\]
Observe that $\R^2_+\cap B_{L\eps}(0,-\eps)\subseteq \R^2\cap B_{L\eps} \subseteq \Omega_{L\eps}$. We choose $B$ in order to have continuity on $\R^2_+\cap \de B_{L\ve}(0,-\ve)$, i.e. we impose
$$\frac{-\log L^2 -2B }{2\pi c}+ c=\frac{\gamma_{L\ve}}{c},$$
which gives the relation
\begin{equation}\label{contoB R}
B=\pi c^2+\log\ve +\gamma +O(L\ve |\log (L\eps)|).
\end{equation} 
This choice of $B$ also implies that the function $c U_\eps$ does not depend on the value of $c$. Then we can choose $c$ by imposing  \begin{equation}\label{norma1}
\|\nabla U_\ve\|_{L^2(\R^2_+)}^2+\|u_\ve\|_{L^2(\R)}^2=1,
\end{equation}
where we set $u_\eps(x) = U_\eps(x,0)$. Since the harmonic extension $\tilde u_\ve$ minimizes the Dirichlet energy among extensions with finite energy, we have
\[\begin{split}
\|\ql u_\ve \|_{L^2(\R)}^2&=\int_{\R^2_+}|\nabla \tilde u_\ve|^2dxdy
\le \int_{\R^2_+}|\nabla U_\ve|^2dxdy,
\end{split}\]
and \eqref{norma1} implies $\|u_\ve\|_{H^{\frac12,2}(\R)}^2\le 1$.  

In order to obtain a more precise expansion of $B$ and $c$ we compute
\begin{equation}\label{interior}\begin{split}
\int_{B_{L\ve}(0,-\ve)\cap \R^2_+} |\nabla U_\ve|^2dxdy&=\frac{1}{4\pi^2 c^2}\int_{B_L(0,-1)\cap \R^2_+}|\nabla \log(x^2+(1+y)^2)|^2dxdy\\
&=\frac{\frac{1}{\pi}\log\bra{\frac{L}{2}}+O\bra{\frac{\log L}{L}}}{c^2},
\end{split}\end{equation}
and
\[\begin{split}\label{split}
\int_{\R^2_+\setminus\Omega_{L\ve}}|\nabla U_\ve|^2 dxdy&=\frac{1}{c^2}\int_{\R^2_+\setminus\Omega_{L\ve}}|\nabla\tilde G|^2 dxdy\\
&=-\frac{1}{c^2}\int_{\R^2_+\cap \de \Omega_{L\ve}}\frac{\de \tilde G}{\de \nu} \tilde G d\sigma-\frac{1}{c^2}\int_{(\R\times\{0\})\setminus \bar \Omega_{L\ve}} \frac{\de \tilde G}{\de y} \tilde G d\sigma\\
&=(I)+(II).
\end{split}\]
By the divergence theorem we have for $\tau < L\ve$ and letting $\tau \to 0$,
\begin{equation}\label{stima(I)}\begin{split}
(I)&=- \frac{\gamma_{L\ve}}{c^2}\int_{(\R\times \{0\})\cap (\ov \Omega_{L\ve} \setminus B_\tau) } \frac{\de \tilde G}{\de \nu}  d\sigma -\frac{\gamma_{L\ve}}{c^2}\int_{\R^2_+\cap \de B_\tau } \frac{\de \tilde G}{\de \nu}  d\sigma \\
&=\frac{\gamma_{L\ve}}{c^2} \left( \int_{(\R\times \{0\})\cap \ov \Omega_{L\ve}} G  d\sigma    +1 \right)\\
&=\frac{\gamma_{L\ve}}{c^2}(1+O(L\ve \log (L\ve)) )\\
&=\frac{\frac{1}{\pi}\log\bra{\frac{1}{L\ve}} - \frac{\gamma}{\pi} +O(L\ve \log^2(L\ve))}{c^2},
\end{split}\end{equation}
where in the third identity we used that $\Omega_{L\ve}\subset B_{2L\ve}$ for $L\ve$ small enough. Observe also that
$$\|u_\ve\|_{L^2(\R)}^2= \frac{1}{c^2}\int_{(\R\times\{0\})\setminus \bar \Omega_{L\ve}} G^2 dx +\frac{O(L\ve \log^2(L\ve)) }{c^2} = - (II) + \frac{O(L\ve \log^2(L\ve)) }{c^2} .$$
Together with \eqref{norma1}-\eqref{stima(I)} this gives 
$$-\log \ve -\log 2 -\gamma+O(L\ve \log^2(L\ve))+ O\bra{\frac{\log L}{L}}=\pi c^2,$$
which, together with \eqref{contoB R}, implies
$$B=-\log 2+ O(L\ve \log^2(L\ve))+ O\bra{\frac{\log L}{L}}.$$
Now, observe that $B_{L\eps}(0,-\eps) \cap  (\R \times \{0 \}) = (-\eps \sqrt{L^2-1},\eps \sqrt{L^2-1})$ and that 
\[\begin{split}
\int_{-\ve\sqrt{L^2-1}}^{\ve\sqrt{L^2-1}}e^{\pi u_\ve^2}dx 
&=\ve  \int_{-\sqrt{L^2-1}}^{\sqrt{L^2-1}} \exp\bra{\pi \bra{c-\frac{\log(1+x^2)+2B}{2\pi c}}^2}dx\\
&>\ve e^{\pi c^2-2B} \int_{-\sqrt{L^2-1}}^{\sqrt{L^2-1}}\frac{1}{1+x^2}dx\\
&=2e^{-\gamma+ O(L\ve \log^2(L\ve))+ O\bra{\frac{\log L}{L}}}\pi\bra{1+O\bra{\frac 1L}}\\
&=2\pi e^{-\gamma}+O(L\ve \log^2(L\ve))+ O\bra{\frac{\log L}{L}}.
\end{split}\]
Moreover
\[\begin{split}
\int_{(\R\times \{0\})\setminus \bar \Omega_{L\ve}} \bra{e^{\pi u_\ve^2}-1}dx & \ge \int_{(\R\times \{0\})\setminus \bar \Omega_{L\ve}} \pi u_\ve^2dx
=\frac{1}{c^2} \int_{(\R\times \{0\})\setminus \bar \Omega_{L\ve}} \pi G^2dx=:\frac{\nu_{L\ve}}{c^2},
\end{split}\]
with
$$\nu_{L\ve}>\nu_{\frac{1}{2}}>0,\quad \text{for } L\ve<\frac{1}{2}.$$
Now choose $L=\log^2\ve$ to obtain
$$O(L\ve \log^2(L\ve))+ O\bra{\frac{\log L}{L}}= O\bra{\frac{\log\log\ve}{\log^2\ve}}= o\bra{\frac{1}{c^2}},$$
so that
\[\begin{split}
E_\pi (u_\eps) = \int_{\R} \bra{e^{\pi u_\ve^2}-1}dx &\ge 2\pi e^{-\gamma} +\frac{\nu_{\frac12}}{c^2}+o\bra{\frac{1}{c^2}}>2\pi e^{-\gamma}
\end{split}\]
for $\ve$ small enough.
\end{proof}

\noindent\emph{Proof of Theorem \ref{extR} (completed).}
By Propositions \ref{Cg2} and \ref{Cg2bis}, we know that $\mu_k\le C$. Then, by dominated convergence theorem we have $e^{\alpha_k u_k^2}\to e^{\pi u_\infty^2}$ in $L^1_{\loc}(\R)$.  
Then, by  Lemma \ref{novan}, we  infer
\begin{equation}\label{residual}
E_{\alpha_k}(u_k) = E_{\pi}(u_\infty)+ \pi (\|u_k\|_{L^2(\R)}^2-\|u_{\infty}\|_{L^2(\R)}^2)  + o(1). 
\end{equation}
This implies that $u_\infty\not\equiv 0$, otherwise we would have $E_{\alpha_k}(u_k)  \le  \pi \|u_k\|_{L^2(\R)}^2  +o(1) \le \pi +o(1)$, which contradicts the strict inequality $D_\pi > 2\pi e^{-\gamma}>\pi$, since $E_{\alpha_k}(u_k)=D_{\alpha_k}\to D_{\pi}$ as $k\to \infty$.

Let us denote $L:=\limsup_{k\to\infty} \|u_k\|^2_2$, $\tau =\frac{\|u_\infty\|^2_{L^2(\R)}}{L}$ and observe that  $L,\tau\in (0,1]$.  Let us consider the sequence $v_k(x)= u_k(\tau x)$. Clearly, we have $v_k \rw v_\infty$ in $H$, where $v_\infty(x):=u_\infty(\tau x)$. Since 
$$
\|v_\infty\|_{L^2}^2 =L, \quad \text{ and }\quad \|(-\Delta)^\frac{1}{4} v_\infty \|_{L^2}^2 \le \liminf_{k\to\infty }\|(-\Delta)^\frac{1}{4} v_k \|_{L^2}^2 = \liminf_{k\to\infty }\|(-\Delta)^\frac{1}{4} u_k \|_{L^2}^2 \le 1-L,
$$
we get $\|v_\infty\|_{H^{\frac{1}{2},2}}\le 1$. By \eqref{residual} we have
$$
D_\pi = E_\pi(u_\infty) + \pi L(1-\tau)= \tau E_\pi(v_\infty) + \pi L(1-\tau) \le  \tau D_\pi + \pi L(1-\tau).
$$
If $\tau<1$, this implies $D_\pi \le \pi L,$
which is not possible. Hence, we must have $\tau =1$ and $E_\pi(u_\infty)= D_\pi$.
\hfill$\square$

\appendix
\section{Appendix: The half-Laplacian on \texorpdfstring{$\R$}{R}}

For $u\in \mathcal{S}$ (the Schwarz space of rapidly decaying functions) we set
\begin{equation}\label{fraclapl0}
\widehat{(-\Delta)^s u}(\xi)=|\xi|^{2s}\hat u(\xi),\quad \hat{f}(\xi):=\int_{\R}f(x)e^{-ix\xi}dx.
\end{equation}
One can prove that it holds (see e.g.)
\begin{equation}\label{fraclapl}
(-\Delta)^s u(x)=K_s P.V.\int_{\R{}}\frac{u(x)-u(y)}{|x-y|^{1+2s}}dy:=K_s \lim_{\varepsilon\to 0}\int_{\R{}\setminus [-\ve,\ve]}\frac{u(x)-u(y)}{|x-y|^{1+2s}}dy,
\end{equation}
from which it follows that 
$$\sup_{x\in \R}|(1+x^{1+2s})(-\Delta)^s \varphi(x)|<\infty,\quad\text{for every }\varphi\in \mathcal{S}\,.$$
Then one can set
\begin{equation}\label{L12}
L_s(\R):=\left\{u\in L^1_{\loc}(\R):\|u\|_{L_s}:=\int_{\R}\frac{|u(x)|}{1+|x|^{1+2s}}dx<\infty   \right\},
\end{equation}
and for every $u\in L_{s}(\R)$ one defines the tempered distribution $(-\Delta)^su$ as
\begin{equation}\label{fraclapl2}
\langle (-\Delta)^s u,\varphi\rangle :=\int_{\R} u(-\Delta)^s \varphi dx =\int_{\R}u\,\mathcal{F}^{-1}(|\xi|\hat \varphi(\xi))\,dx,\quad\text{for every }\varphi \in\mathcal{S}.
\end{equation}
Moreover we will define for $p\ge 1$ and $s\in (0,1)$
\begin{equation}\label{defHsp}
H^{s,p}(\R):=\{u\in L^p(\R): (-\Delta)^\frac{s}{2} u \in L^p(\R)\}.
\end{equation}
In the case $s=\frac{1}{2}$ we have $K_\frac12=\frac{1}{\pi}$ in \eqref{fraclapl} and a simple alternative definition of $\hl$ can be given via the Poisson integral. For $u\in L_{\frac{1}{2}}(\R)$ define the Poisson integral
\begin{equation}\label{Poisson2}
\tilde u(x,y):=\frac{1}{\pi}\int_{\R}\frac{yu(\xi)}{(y^2+(x-\xi)^2)}d\xi, \quad y>0,
\end{equation}
which is harmonic in $\R^2_+=\R\times(0,\infty)$ and satisfies the boundary condition $\tilde u|_{\R\times \{0\}} =u$ in the following sense:

\begin{prop} If $u\in L^\frac{1}{2}(\R)$, then $\tilde u(\cdot,y)\in L^1_{\loc}(\R)$ for $y\in (0,\infty)$ and $\tilde u(\cdot,y)\to u$ in the sense of distributions as $y\to 0^+$.  If $u\in L^\frac{1}{2}(\R)\cap C((a,b))$ for some interval $(a,b)\subseteq \R$, then $\tilde u$ extends continuously to $(a,b)\times \{0\}$ and $\tilde u (x,0)=u(x)$ for any $x\in (a,b)$.   If $u\in H^\frac{1}{2}(\R)$, then $\tilde u\in H^1(\R^2_+)$, the identity $\|\nabla \tilde u \|_{L^2(\R^2_+)} = \|(-\Delta)^\frac{1}{4}u \|_{L^2(\R)}$ holds,  and $\tilde u|_{\R\times\{0\} }= u$ in the sense of traces. 
\end{prop}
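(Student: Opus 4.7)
The proof naturally splits into three parts, one per hypothesis. Throughout I write $P_y(x):=\frac{y}{\pi(y^2+x^2)}$ for the Poisson kernel, so that $\tilde u(x,y)=(P_y*u)(x)$, and use that $\int_\R P_y=1$ and $\widehat{P_y}(\xi)=e^{-y|\xi|}$.

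\emph{Part 1 (local $L^1$ and distributional convergence for $u\in L_{1/2}(\R)$).} For any compact $K\subset[-R,R]$ and any $y>0$, Fubini gives
\[
\int_K|\tilde u(x,y)|\,dx\le \int_\R|u(\xi)|\!\left(\int_K P_y(x-\xi)\,dx\right)\!d\xi .
\]
The inner integral is $\le \pi$ for every $\xi$, and for $|\xi|\ge 2R$ one has $|x-\xi|\ge|\xi|/2$ on $K$, so it is $\le 4y|K|/(\pi\xi^2)$. Hence it is majorized by $C(K,y)(1+\xi^2)^{-1}$, and the $L_{1/2}$ condition closes the estimate. For the distributional limit, pick $\varphi\in C_c^\infty(\R)$ and use Fubini to write $\int_\R\tilde u(\cdot,y)\varphi=\int_\R u\,(P_y*\varphi)$. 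Since $\varphi$ is uniformly continuous, $P_y*\varphi\to\varphi$ uniformly on $\R$; moreover, if $\operatorname{supp}\varphi\subset[-R,R]$ then the same splitting as above shows $|P_y*\varphi(\xi)|\le C_\varphi(1+\xi^2)^{-1}$ uniformly for $y\in(0,1]$. Dominated convergence against $|u(\xi)|/(1+\xi^2)\in L^1(\R)$ yields $\int\tilde u(\cdot,y)\varphi\to\int u\,\varphi$.

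\emph{Part 2 (continuous extension when $u\in C((a,b))$).} Fix $x_0\in(a,b)$ and $\eps>0$, and choose $\delta>0$ so that $[x_0-\delta,x_0+\delta]\subset(a,b)$ and $|u(\xi)-u(x_0)|<\eps$ on this interval. Using $\int_\R P_y=1$, for $|x-x_0|<\delta/2$ I decompose
\[
\tilde u(x,y)-u(x_0)=\!\!\int_{|\xi-x_0|<\delta}\!\!P_y(x-\xi)(u(\xi)-u(x_0))\,d\xi\;-\;u(x_0)\!\!\int_{|\xi-x_0|\ge\delta}\!\!P_y(x-\xi)\,d\xi\;+\;\!\!\int_{|\xi-x_0|\ge\delta}\!\!P_y(x-\xi)u(\xi)\,d\xi.
\]
The first summand is bounded by $\eps$. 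The second tends to $0$ as $y\to 0^+$ uniformly in $x$ since the Poisson mass escapes from $\{|\xi-x_0|\ge\delta\}$. For the third, split into $|\xi|\le M$ and $|\xi|>M$ (with $M$ large); on the first set $|u|$ is $L^1$ and $P_y(x-\xi)\le y/(\pi(\delta/2)^2)$, and on the second $P_y(x-\xi)\le Cy/\xi^2$, so the contribution is $O(y)(1+\|u\|_{L_{1/2}})\to 0$. Letting $y\to 0^+$ and $x\to x_0$ then $\eps\to 0$ proves continuous extension with $\tilde u(x_0,0)=u(x_0)$.

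\emph{Part 3 (Fourier identity and trace).} For $\varphi\in\mathcal S(\R)$ the formula $\widehat{\tilde\varphi(\cdot,y)}(\xi)=e^{-y|\xi|}\hat\varphi(\xi)$ is immediate from $\widehat{P_y}(\xi)=e^{-y|\xi|}$, and differentiating under the integral gives $\widehat{\de_x\tilde\varphi}=i\xi e^{-y|\xi|}\hat\varphi$, $\widehat{\de_y\tilde\varphi}=-|\xi|e^{-y|\xi|}\hat\varphi$. Plancherel in $x$ and integration in $y$ yield
\[
\|\nabla\tilde\varphi\|_{L^2(\R^2_+)}^2=\frac{1}{\pi}\int_\R\xi^2|\hat\varphi(\xi)|^2\!\int_0^{\infty}\!\!e^{-2y|\xi|}dy\,d\xi=\frac{1}{2\pi}\int_\R|\xi||\hat\varphi(\xi)|^2d\xi=\|\ql\varphi\|_{L^2(\R)}^2 .
\]
For general $u\in H^{\frac12,2}(\R)$, approximate by $\varphi_n\in\mathcal S$ with $\varphi_n\to u$ in $H^{\frac12,2}$. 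Linearity and the identity just proved show $(\nabla\tilde\varphi_n)$ is Cauchy in $L^2(\R^2_+)$, while $(\tilde\varphi_n)$ is Cauchy in $L^2_{\loc}(\R^2_+)$ via Part~1 applied to differences; the limit must coincide with $\tilde u$ by the distributional convergence of Part~1 on each slice $\R\times\{y\}$. Passing to the limit gives $\tilde u\in H^1(\R^2_+)$ together with $\|\nabla\tilde u\|_{L^2(\R^2_+)}=\|\ql u\|_{L^2(\R)}$. The trace assertion follows by continuity of the trace operator $H^1(\R^2_+)\to H^{\frac12,2}(\R)$: the traces of $\tilde\varphi_n$ are exactly $\varphi_n$, which converge to $u$ in $H^{\frac12,2}$.

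The main technical point is Part 3: one has to match the three convergences (Fourier-side, global gradient, distributional slice-trace) consistently so that the candidate strong $H^1$-limit of $\tilde\varphi_n$ really is the Poisson integral $\tilde u$. Parts 1 and 2 are essentially the harmonic-analysis folklore adapted to the $L_{1/2}$ growth condition, so the only care needed is in the splitting that makes the Poisson kernel integrable against the weight $(1+\xi^2)^{-1}$.
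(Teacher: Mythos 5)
The paper states this appendix proposition without proof (it is standard background, with \cite{SiTa}, \cite{Rudin}, \cite{CaffSil} cited for neighbouring facts), so there is no argument in the paper to compare with; I judge your proposal on its own merits. Parts 1 and 2 are correct: the splitting of the kernel integral that produces the weight $(1+\xi^2)^{-1}$ is exactly what the $L_{\frac12}$ condition \eqref{L12} requires, and the approximate-identity argument at a point of continuity is complete. The Fourier computation in Part 3, giving $\|\nabla\tilde\varphi\|_{L^2(\R^2_+)}=\|\ql\varphi\|_{L^2(\R)}$ for $\varphi\in\mathcal S$ under the convention \eqref{fraclapl0}, is also correct, and density of $\mathcal S$ in $H^{\frac12,2}(\R)$ is the right strategy.

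The gap is in the limit passage of Part 3. You pass to the limit \emph{in $H^1(\R^2_+)$} and then invoke the bounded trace operator $H^1(\R^2_+)\to H^{\frac12,2}(\R)$, but no such global convergence holds, and in fact the objects involved need not lie in $L^2(\R^2_+)$ at all: by Plancherel, $\|\tilde\varphi\|_{L^2(\R^2_+)}^2=\frac{1}{4\pi}\int_\R |\xi|^{-1}|\hat\varphi(\xi)|^2\,d\xi$, which diverges whenever $\int_\R\varphi\,dx\neq 0$; the same low-frequency obstruction shows that $\|\tilde\varphi_n-\tilde\varphi_m\|_{L^2(\R^2_+)}$ is not controlled by $\|\varphi_n-\varphi_m\|_{H^{\frac12,2}(\R)}$, and that for generic $u\in H^{\frac12,2}(\R)$ the extension $\tilde u$ is not in $L^2(\R^2_+)$ (so the membership $\tilde u\in H^1(\R^2_+)$ must anyway be read as $\tilde u\in H^1_{\loc}(\overline{\R^2_+})$ with $\nabla\tilde u\in L^2(\R^2_+)$, which is all the paper ever uses). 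Likewise, the claim that $(\tilde\varphi_n)$ is Cauchy in $L^2_{\loc}(\R^2_+)$ does not follow from Part 1, which only gives $L^1$ bounds on individual slices. The repair is short and standard: since the Fourier multiplier of the Poisson kernel in \eqref{Poisson2} is $e^{-y|\xi|}\le 1$, one has the slice contraction $\|\widetilde{(\varphi_n-\varphi_m)}(\cdot,y)\|_{L^2(\R)}\le\|\varphi_n-\varphi_m\|_{L^2(\R)}$, hence $(\tilde\varphi_n)$ is Cauchy in $L^2(\R\times(0,T))$ for every finite $T$; combined with the Cauchy property of the gradients this gives $\tilde\varphi_n\to \tilde u$ in $H^1(\R\times(0,T))$ (the limit is identified with $\tilde u$ precisely by your slice-wise Part 1 estimate applied to $\varphi_n-u$), so $\nabla\tilde u\in L^2(\R^2_+)$ with the claimed norm identity, and the trace theorem on the strip $\R\times(0,1)$ yields $\tilde u|_{\R\times\{0\}}=\lim_n\varphi_n=u$. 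With this localization your argument is complete; as written, the final two sentences of Part 3 assert a convergence and a global membership that fail.
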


Then we have (see e.g \cite{CaffSil})
\begin{equation}\label{fraclapl3}
\hl u =- \frac{\de \tilde u}{\partial y}\bigg|_{y=0},
\end{equation}
where the identity is pointwise if $u$ is regular enough (for instance $C^{1,\alpha}_{\loc}(\R)$), and has to be read in the sense of tempered distributions in general, with
\begin{equation}\label{fraclapl3b}
\bigg\langle -\frac{\de \tilde u}{\partial y}\bigg|_{y=0},\varphi\bigg\rangle:=\bigg\langle u, -\frac{\de \tilde \varphi}{\partial y}\bigg|_{y=0} \bigg\rangle,\quad \varphi\in\mathcal{S},\quad\tilde\varphi\text{ as in \eqref{Poisson2}}.
\end{equation}

More precisely:

\begin{prop}\label{lapeq} If $u\in L_{\frac{1}{2}}(\R)\cap C^{1,\alpha}_{\loc}((a,b))$ for some interval $(a,b)\subset\R$ and some $\alpha\in (0,1)$, then the tempered distribution $\hl u$ defined in \eqref{fraclapl2} coincides on the interval $(a,b)$ with the functions given by \eqref{fraclapl} and \eqref{fraclapl3}. For general $u\in L_\frac12(\R)$ the definitions \eqref{fraclapl2} and \eqref{fraclapl3} are equivalent, where the right-hand side of \eqref{fraclapl3} is defined by \eqref{fraclapl3b}.
\end{prop}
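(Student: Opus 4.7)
The strategy is to reduce everything to the Schwartz class, where all three definitions \eqref{fraclapl0}, \eqref{fraclapl}, \eqref{fraclapl3} coincide by classical Fourier-analytic arguments: the Poisson kernel satisfies $\widehat{P_y}(\xi)=e^{-y|\xi|}$, so $-\partial_y(P_y*\varphi)|_{y=0}$ has Fourier multiplier $|\xi|$, matching \eqref{fraclapl0}; and the principal-value representation \eqref{fraclapl} is obtained by identifying the inverse Fourier transform of $|\xi|$ as a tempered distribution, via the standard symmetric-integration argument. Linearity and duality will then bootstrap these identifications to the two cases in the statement.

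The second assertion (general $u\in L_{\frac12}(\R)$) I would handle first. For any $\varphi\in\mathcal{S}$, its Poisson extension $\tilde\varphi$ is smooth up to $\R\times\{0\}$ and $-\partial_y\tilde\varphi|_{y=0}=\hl\varphi\in\mathcal{S}$ by the Fourier computation above. Using the definition \eqref{fraclapl3b},
\[
\Bigl\langle -\partial_y\tilde u|_{y=0},\varphi\Bigr\rangle=\Bigl\langle u,-\partial_y\tilde\varphi|_{y=0}\Bigr\rangle=\bigl\langle u,\hl\varphi\bigr\rangle=\bigl\langle\hl u,\varphi\bigr\rangle,
\]
the last equality being the definition \eqref{fraclapl2}. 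This establishes the distributional equivalence.

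For the first (pointwise) assertion, fix $x_0\in(a,b)$ and a subinterval $(a',b')\subset\subset(a,b)$ with $x_0\in(a',b')$. Pick $\chi\in C^\infty_c((a,b))$ with $\chi\equiv 1$ on $(a',b')$ and split $u=u_1+u_2$ with $u_1:=\chi u\in C^{1,\alpha}_c(\R)$ and $u_2:=(1-\chi)u\in L_{\frac12}(\R)$. Since $u_2$ vanishes on $(a',b')$, for any $x\in(a',b')$ the integrand in \eqref{fraclapl} has no singularity and a direct computation (differentiating \eqref{Poisson2} under the integral sign, which is legitimate because $u_2$ is supported away from $x$) gives
\[
\hl u_2(x)=-\frac{1}{\pi}\int_{\R}\frac{u_2(\xi)}{(x-\xi)^2}\,d\xi=-\frac{\partial\tilde u_2}{\partial y}(x,0),
\]
and both sides define continuous (in fact smooth) functions of $x\in(a',b')$, which agree with the distribution $\hl u_2$ restricted to $(a',b')$ by the second assertion already proved. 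For $u_1$, the $C^{1,\alpha}$ Taylor expansion $u_1(\xi)-u_1(x)=u_1'(x)(\xi-x)+O(|\xi-x|^{1+\alpha})$ shows that the odd linear part vanishes in the symmetric limit while the remainder is integrable against $|x-\xi|^{-2}$, so the principal value in \eqref{fraclapl} converges pointwise and defines a continuous function on $(a',b')$. Approximating $u_1$ by a sequence $\varphi_n\in\mathcal{S}$ converging in $C^{1,\alpha}(\R)\cap L^1(\R)$ (convolving with standard mollifiers together with a soft cutoff), all three definitions agree for $\varphi_n$ by the Schwartz case, and one can pass to the limit: in \eqref{fraclapl} by dominated convergence based on the uniform $C^{1,\alpha}$ bound near the singularity and the $L^1$ control of the tail, in \eqref{fraclapl3} by uniform continuity of the Poisson extension up to the boundary (classical single-layer estimates), and in \eqref{fraclapl2} by the distributional continuity. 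Summing the contributions from $u_1$ and $u_2$ and letting $(a',b')$ exhaust $(a,b)$ concludes the proof.

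The main delicate point is the passage to the limit for the $C^{1,\alpha}$ piece $u_1$, which crucially uses the $C^{1,\alpha}$-topology so that the PV integrals converge uniformly on compact subsets of $(a',b')$; weaker regularity would not suffice to handle the symmetric cancellation at the singularity.
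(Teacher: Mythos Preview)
The paper states this proposition without proof; it is presented in the appendix as a standard fact about the half-Laplacian, so there is no argument to compare your proposal against.

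Your outline is essentially the standard route and is sound. One small slip: for $\varphi\in\mathcal{S}$ the function $\hl\varphi$ is \emph{not} in $\mathcal{S}$ in general, since the multiplier $|\xi|$ is not smooth at the origin; what you actually need (and what the paper records just before \eqref{L12}) is the decay $\sup_{x}\bigl|(1+x^{2})\hl\varphi(x)\bigr|<\infty$, which is exactly what makes the pairing with $u\in L_{\frac12}(\R)$ well defined. With that correction the duality step for the second assertion goes through verbatim, and your cut-off decomposition $u=\chi u+(1-\chi)u$ together with the $C^{1,\alpha}$ Taylor argument for the principal value handles the first assertion.
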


It is known that the Poisson integral of a function $u\in L^\frac{1}{2}(\R)$ is the unique harmonic extension of $u$ under some growth constraints at infinity. In fact, combining \cite[Theorem 2.1 and Corollary 3.1]{SiTa} and \cite[Theorem C]{Rudin} we get:

\begin{prop}\label{Uniq}
For any $u\in L_\frac{1}{2}(\R)$, the Poisson extension $\tilde u$ satisfies $\tilde u(x,y) = o(y^{-1} (x^2+y^2))$ as $|(x,y)|\to \infty$.  Moreover, if $U$ is a harmonic function in $\R^2_+$ which satisfies $U(x,y)=o(y^{-1} (x^2+y^2))$ as $|(x,y)|\to \infty$  and $U(\cdot,y)\to u$ as $y\to 0^+$ in the sense of distributions, then $U = \tilde u$ in $\R^2_+$. 
\end{prop}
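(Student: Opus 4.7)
The plan is to prove the two assertions separately.

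\emph{Part 1 (growth of $\tilde u$).} Fix $r:=|(x,y)|$ and decompose the Poisson integral \eqref{Poisson2} over $A_1=\set{|\xi|<\sqrt r}$, $A_2=\set{\sqrt r\le|\xi|<2r}$, $A_3=\set{|\xi|\ge 2r}$. On $A_1$, for $r$ large we have $|\xi|<r/2$, hence $(x-\xi)^2+y^2\ge r^2/4$; combined with $\int_{A_1}|u|\,d\xi\le(1+r)\|u\|_{L_{1/2}}$, this contributes at most $O(y^2(1+r)/r^4)$ to $y\tilde u(x,y)/(x^2+y^2)$, which is $o(1)$ since $y\le r$. On $A_2$ I use the trivial bound $y^2/(y^2+(x-\xi)^2)\le 1$ together with $\int_{A_2}|u|\,d\xi\le(1+4r^2)\int_{|\xi|\ge\sqrt r}|u(\xi)|(1+\xi^2)^{-1}\,d\xi=o(r^2)$ by the $L_{1/2}$-integrability of $u$, giving an $o(1)$ contribution. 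On $A_3$, $|x-\xi|\ge|\xi|/2$ yields $y^2/((x-\xi)^2+y^2)\le 8y^2/(1+\xi^2)$, and tail integrability again gives $o(1)$. Summing the three estimates yields $y\tilde u(x,y)/(x^2+y^2)\to 0$.

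\emph{Part 2 (uniqueness).} Set $W:=U-\tilde u$. By Part~1, $W$ is harmonic in $\R^2_+$, satisfies $W(x,y)=o(y^{-1}(x^2+y^2))$, and has vanishing distributional trace at $y=0$. I extend $W$ harmonically across the real axis by Schwarz reflection $W(x,-y):=-W(x,y)$; the fact that the boundary value vanishes only in the distributional sense is handled by \cite[Theorem C]{Rudin}, which guarantees that the reflection yields a genuinely harmonic function on all of $\R^2$. A Phragm\'en-Lindel\"of / Liouville-type argument applied to this odd-in-$y$ entire harmonic function, using that $|W|=o(r^2)$ on $\set{|y|\ge 1}$, forces $W$ to be a harmonic polynomial. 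Writing $W=\Im f(z)$ with $z=x+iy$ and $f$ a polynomial having real Taylor coefficients (as dictated by the oddness in $y$), the sharper growth bound $W=o(r^2/|y|)$ can be tested on the imaginary axis ($x=0$) to rule out every monomial $\Im(z^k)$ of odd degree $k\ge 1$, and on rays $y=\alpha x$ for suitably chosen $\alpha$ to rule out the even-degree ones (each $\Im((1+i\alpha)^k)$ being nonzero for some choice of $\alpha$). Hence all coefficients vanish, $W\equiv 0$, and $U=\tilde u$.

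The main technical obstacle is the boundary extension step: passing from a merely distributional vanishing of the trace to a genuine harmonic extension of $W$ across the real line requires the half-plane boundary-regularity theory developed in \cite{SiTa,Rudin}. Once that machinery is available, both the reduction to harmonic polynomials and the explicit elimination of odd-in-$y$ monomials are classical.
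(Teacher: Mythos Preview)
Your Part 1 is a correct, self-contained computation; the paper does not give one (it simply cites \cite[Thm.~2.1 and Cor.~3.1]{SiTa} and \cite[Thm.~C]{Rudin} for the whole proposition), so here you are doing strictly more than the authors.

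In Part 2, however, there is a genuine gap. You assert that ``a Phragm\'en--Lindel\"of / Liouville-type argument, using that $|W|=o(r^2)$ on $\{|y|\ge 1\}$, forces $W$ to be a harmonic polynomial,'' but you do not say which Phragm\'en--Lindel\"of principle you mean, and I do not see one that applies. The obstruction is precisely the strip $\{|y|<1\}$: the hypothesis $W=o(r^2/|y|)$ gives no uniform bound there, and the standard Phragm\'en--Lindel\"of theorems for a strip require an a priori sub-exponential growth condition inside the strip, which you have not established. So the reduction to harmonic polynomials is not justified as written, and your subsequent elimination of the monomials $\Im(z^k)$ is left hanging.

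The gap is easy to close once you notice that the intermediate ``$W$ is a polynomial'' step is unnecessary. After the reflection (granted by \cite[Thm.~C]{Rudin}), $W$ is entire harmonic and odd in $y$, so in polar coordinates $W(r,\theta)=\sum_{n\ge 1} b_n r^n \sin n\theta$. The growth hypothesis reads $|W(r,\theta)\sin\theta|\le \varepsilon r$ for $r$ large, uniformly in $\theta$. Using $|\sin n\theta|\le n|\sin\theta|$ one gets $|W(r,\theta)\sin n\theta|\le n\varepsilon r$ pointwise, hence
\[
|b_n|\,r^n=\Big|\tfrac{1}{\pi}\int_0^{2\pi} W(r,\theta)\sin n\theta\,d\theta\Big|\le 2n\varepsilon r,
\]
which forces $b_n=0$ for every $n\ge 1$. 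This replaces both your Step~4 and Step~5 in one stroke and uses the growth bound on the \emph{whole} circle (including the region where $|y|$ is small) via the factor $\sin n\theta$, which is exactly what your argument was missing.
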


\end{document}